\newtheorem{theorem}{Theorem}[section]
\newtheorem{lemma}[theorem]{Lemma}
\newtheorem{corollary}[theorem]{Corollary}
\newtheorem{proposition}[theorem]{Proposition}
\theoremstyle{definition}
\newtheorem{remark} [theorem] {Remark}
\newcommand{\la}{\lambda}
\newcommand{\norm}[1]{\left\lVert#1\right\rVert}
\newcommand{\pd}[2]{\frac{\partial#1}{\partial#2}}
\newcommand{\R}{\mathbb{R}}
\newcommand{\e}{\epsilon}
\newcommand{\co}{{\rm{const}}}
\begin{document}

\title[Unique continuation from a crack's tip under Neumann boundary conditions]
{Unique continuation from a crack's tip \\ under Neumann boundary conditions}
\date{\today}

\author{Veronica Felli and Giovanni Siclari}

\address[V. Felli and G. Siclari]{Dipartimento di Matematica
  e Applicazioni
\newline\indent
Universit\`a degli Studi di Milano - Bicocca
\newline\indent
Via Cozzi 55, 20125, Milano, Italy}
\email{veronica.felli@unimib.it, g.siclari2@campus.unimib.it}

\begin{abstract}
  We derive local asymptotics of solutions to second order elliptic
  equations at the edge of a $(N-1)$-dimensional crack, with homogeneous Neumann
  boundary conditions prescribed on both sides of the crack.
  A combination of  blow-up analysis and monotonicity arguments
  provides a classification of all possible asymptotic homogeneities
  of solutions at the crack's tip, together with a a strong
  unique continuation principle.
\end{abstract}

\maketitle

 \noindent{\bf Keywords.}
  Crack singularities; monotonicity formula; unique continuation; blow-up analysis.

\medskip \noindent {\bf MSC2020 classification.} 
  35C20, %Asymptotic expansions of solutions to PDEs
35J25, %Boundary value problems for second-order elliptic equations}
74A45. %Theories of fracture and damage

\section{Introduction}
In this paper we establish a strong unique continuation principle and
analyse the asymptotic behaviour of solutions,
from the edge of a flat
crack $\Gamma$, for the
following elliptic problem with homogeneous Neumann boundary
conditions on both sides of the crack
\begin{equation}\label{op}
  \begin{cases}
    -\Delta u=f u, &\text{in } B_R \setminus \Gamma,  \\[5pt]
    \dfrac{\partial^+u}{\partial\nu^+}=\dfrac{\partial^-u}{\partial\nu^-}=0,
    &\text{on } \Gamma,
  \end{cases}
\end{equation}
where 
\[
  B_R=\{x \in \R^N: |x| <R\}\subset \R^N,  \quad  N \ge 2,
\] 
$\Gamma$ is a closed subset of $\R^{N-1}\times\{0\}$ with
  $C^{1,1}$-boundary, and
  the potential $f$  satisfies either  assumption \eqref{h1} or  assumption  \eqref{h2} below.
The boundary operators
  $\frac{\partial^+}{\partial\nu^+}$ and 
  $\frac{\partial^-}{\partial\nu^-}$ in  \eqref{op} are defined as
\begin{equation*}
  \dfrac{\partial^+u}{\partial\nu^+}:=-\frac{\partial}{\partial
    x_N}\left(u\big|_{B_R^+}\right)\quad\text{and}\quad
  \dfrac{\partial^-u}{\partial\nu^-}:=\frac{\partial}{\partial
    x_N}\left(u\big|_{B_R^-}\right),
\end{equation*}
where we are denoting, for all $r>0$,
\begin{equation}\label{Br}
  B^+_r:=\{(x',x_{N-1},x_{N})\in B_r: x_{N}>0\}, \quad B^-_r:=\{(x',x_{N-1},x_N)\in B_r: x_N<0\},	
\end{equation}
being 
the total variable $x\in\R^N$ written as
$x=(x',x_{N-1},x_{N})\in\R^{N-2}\times\R\times\R$.

The interest in elliptic problems in domains with cracks is motivated
by elasticity theory, see e.g. \cite{KLH,DMOT}.  In particular, in crack problems, the coefficients of the asymptotic expansion of
solutions near the crack’s tip are related to the so called
\emph{stress intensity factor}, see \cite{DMOT}. We refer to
\cite{CD,costabel2003asymptotics,DW}  and references therein for the
study of the 
behaviour of solutions at the edge of a cut.

We recall that a family of functions $\mathcal{F}=\{f_i\}_{i \in I}$,
with $f_i:A \to \R$, $A \subseteq \R^N$, satisfies the \emph{strong unique
continuation} property if 
no function in $\mathcal F$, besides possibly the trivial null
function, has a zero of infinite order at any point $x_0 \in A$.
The first significant contribution to the study of strong
  unique continuation for second order elliptic equations was given by
  Carleman in \cite{carleman1939probleme} for bounded potentials  in
  dimension $2$, by means of weighted a priori inequalities. The
  so-called \emph{Carleman estimates} are still today one of
  the main techniques 
  used in this research field and have been adapted by many
  authors to generalize Carleman's results and prove unique
  continuation for more general classes of elliptic 
  equations; among the numerous contributions in this area we mention
  \cite{Aronszajn,jerisonkenig1985,sogge,wolff}  and in particular
  \cite{koch2001carleman}, where  strong
  unique continuation is established under sharp scale invariant
  assumptions on the
  potentials. Garofalo and Lin  developed  in
  \cite{garofalo1986monotonicity} an alternative approach to the study of unique
  continuation, based on local doubling
  inequalities,  which are in turn deduced by the monotonicity of an Almgren
  type frequency function, see \cite{almgren1983q}. In the present
  paper we follow this latter approach and study the Almgren frequency
  function $\mathcal N$
  around the point $0$ lying on the edge of the crack, defined as the ratio between
the local energy function
\[
  E(r):=\frac{1}{r^N}\int_{B_r\setminus\Gamma}(|\nabla
  u|^2-fu^2) \, dx
\]
and the local mass or height
\[
  H(r):=\frac{1}{r^{N-1}}\int_{\partial B_r}u^2 \, d\sigma,
\]
i.e.
\[
  \mathcal{N}(r):=\frac{E(r)}{H(r)}.
\]
 
The boundedness of the frequency function $\mathcal N$ will imply 
a strong unique continuation principle from the edge
of $\Gamma$. Furthermore, the
  monotonicity properties of the quotient $\mathcal N$ will allow us
  to obtain energy estimates, which will be combined with a blow-up
  analysis for scaled solutions to prove that any $u\in
  H^1(B_R\setminus\Gamma)$ weakly solving 
  \eqref{op} behaves, asymptotically at the edge of the crack $\Gamma$, as a
  homogeneous function with half-integer degree of homogeneity.
We mention that an analogous procedure for classifying
all possible asymptotic homogeneity degrees of solutions by monotonicity formula
and blow-up analysis was introduced in 
\cite{felli2010asymptotic,felli2012note,felli2010behavior} for equations with singular
potentials  and adapted
to domains with corners in \cite{felli2013almgren}.

The derivation of a monotonicity formula around a
boundary point presents some additional difficulties with respect to
the interior case,  due to the 
role that the regularity and the geometry of the domain
may play.

   Among papers dealing with unique continuation from the boundary
   under homogeneous Dirichlet conditions  we
cite \cite{adolfsson1997c1,Adolf-Kenig,felli2013almgren,
  Kukavica-Nystrom}. Instead, for Neumann problems, we refer to 
\cite{adolfsson1997c1} and 
\cite{tao2005boundary} for the  
homogeneous case and to \cite{dipierro2020unique} for unique
continuation from the vertex of a cone under  non-homogeneous
Neumann  
conditions. We also mention that unique continuation from Dirichlet-Neumann junctions
for planar mixed boundary value problems was established in \cite{FFFN}.

The  high non-smoothness of the domain $B_R \setminus
  \Gamma$ at points on the edge of the crack causes two kinds of
  difficulties in the proof of the Pohozaev type identity which is needed to estimate the 
derivative of the Almgren frequency function, see Proposition
\ref{N'}.  A first difficulty is a lack of regularity that can
  prevent us from integrating  Rellich-Ne\u cas identities of type \eqref{Rellich-Necas}. A second
  issue is related to
 the interference with
  the geometry of the crack, which manifests in the form of extra terms,
  produced by  integration by parts, which could be problematic to 
  estimate.

  In \cite{de2021unique}, where homogeneous Dirichlet
    conditions on the crack are considered, this latter difficulty is
    overcome by assuming a local star-shapedness condition for the
    cracked domain, which forces the  extra terms 
  produced by  integration by parts to have a sign favourable to 
  the desired estimates. The problem produced by lack of regularity
  is instead solved in \cite{de2021unique}  by approximating  $B_r
\setminus \Gamma$ with a sequence  of smooth domains 
$\Omega_{n,r} \subset B_r$ and constructing approximating
  problems in $\Omega_{n,r}$, whose solutions $u_n$ converge in $H^1(B_r)$
  to the solution of the original cracked  problem for $r
  \in (0,R)$ small enough. Each 
function $u_n$ is sufficiently regular to satisfy a Pohozaev type
identity, in which it is possible to pass to the limit as $n \to 
\infty$, obtaining  the inequality needed to estimate the
  derivative of the Almgren frequency function.

  In the present paper
we use a similar approximation technique, which however entails
additional difficulties and requires substantial modifications due to
the Neumann boundary conditions.
In particular, the existence of an extension operator  for
Sobolev functions on  $\Omega_n$, uniform with respect to $n$,
which is obvious under  Dirichlet boundary conditions,  turns out to be
more delicate in the Neumann case, see Proposition
\ref{pon ex}. Furthermore the different boundary conditions produce
remainder terms with different signs, requiring a
modified profile for the approximating domains, see Section
\ref{sec:appr-probl}.

Unlike \cite{de2021unique}, we do not require any geometric
star-shapedness condition on
 the crack  $\Gamma$, limiting ourselves to a 
 $C^{1,1}$-regularity assumption,  see \eqref{condition_g1} below.
The removal of the star-shapedness condition assumed in
\cite{de2021unique} requires a more sophisticated
monotonicity formula, developed for the auxiliary problem
\eqref{straightenedproblem}, obtained after   straightening the  crack
$\Gamma$ with a diffeomorphism introduced in \cite{adolfsson1997c1}
and used, with a similar purpose,  in \cite{de2021strong} for fractional elliptic equations, see Section \ref{sec:boundary}.
 The effect of this transformation straightening the crack
  is the appearance of a variable coefficient matrix in the
  divergence-form elliptic operator, with a consequent
  adaption of the definition of the energy $E$ and the height $H$ 
in \eqref{H} and \eqref{E}.

To state the main results of this paper, we introduce now our
assumptions on the crack $\Gamma$ and the potential $f$.
We suppose that $\Gamma$ is a closed set   of the form 
\begin{equation}\label{Gamma 2}
\Gamma:=\{(x_1,0):x_1 \in [0,+\infty)\}\quad \text{ if } N=2
\end{equation}
and
\begin{equation} \label{Gamma}
\Gamma:=\{(x',x_{N-1},0)\in \R^N: g(x')\le x_{N-1}\}\quad \text{ if } N \ge 3,
\end{equation}
where
 \begin{equation} \label{condition_g1}
   g:\R^{N-2} \rightarrow \R, \quad g \in C^{1,1}(\R^{N-2}),
 \end{equation}
and 
\begin{equation} \label{condition_g2}
  	g(0)=0,\quad \nabla g(0)=0.
\end{equation}
Assumption \eqref{condition_g2} is not
  restrictive, being  a free consequence of an appropriate choice of the Cartesian
coordinate system. We are going to study the behaviour of solutions to
\eqref{op} near $0$, which belongs to the edge of the crack $\Gamma$
defined in \eqref{Gamma 2}--\eqref{Gamma}.

Furthermore we  assume that
$f:B_R\to\R$ is a measurable function for which
there exists
$\epsilon\in (0,1)$ such that either
		\begin{equation}
			\tag{H1} \label{h1}
			f \in W^{1,\frac{N}{2}+\epsilon}(B_R\setminus \Gamma),
		\end{equation}
		or
		\begin{equation}
			\tag{H2}\label{h2}
			N\ge 3\quad  \text{and}\quad  |f(x)| \le c |x|^{-2+2\epsilon}
			\quad \text{for some }  c > 0\text{ and for all }x \in B_R.
		\end{equation}
For every closed set $K\subseteq \R^N\times \{0\}$ 
and $r >0$, we define the functional space
$H^1_{0,\partial B_r}(B_r\setminus K)$ as the closure in $H^1(B_r\setminus K)$ of the set
		\begin{equation*}
			\{v\in H^1(B_r\setminus K):
			v=0 \text{ in a neighbourhood of }\partial B_r\}.
		\end{equation*}
 A weak solution to \eqref{op} is a function $u \in H^1(B_R \setminus \Gamma)$ such that
		\begin{equation*}
			\int_{B_R\setminus\Gamma}(\nabla u\cdot \nabla \phi -fu \phi) \, dy=0,	
		\end{equation*}
for all $\phi \in H_{0,\partial B_R}^1(B_R \setminus \Gamma)$.

The following 
unique continuation principle for solutions to \eqref{op} is our main result.
\begin{theorem}\label{t:ucp}
	Let $u$ be a weak solution to \eqref{op}  with
          $\Gamma$ as in   \eqref{Gamma
  2}--\eqref{Gamma} and $f$ satisfying either \eqref{h1} or
 \eqref{h2}. If $u(x)=O(|x|^k)$ as $|x| \to  0^+$ for all $k \in
\mathbb{N}$,
then 	$u\equiv0$ in $B_R$.
\end{theorem}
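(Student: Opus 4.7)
The plan is to follow the by-now classical Garofalo--Lin strategy announced in the introduction: use the monotonicity of the Almgren quotient $\mathcal{N}$ to produce a pointwise lower bound on the height $H$ near $0$, confront it with the upper bound coming from infinite-order vanishing to deduce that $u$ vanishes on a full neighbourhood of the crack's tip, and then propagate this zero to the whole of $B_R$ by an interior strong unique continuation principle. A preliminary step is to straighten $\Gamma$ by the diffeomorphism of \cite{adolfsson1997c1} and pass to the auxiliary problem \eqref{straightenedproblem}; this introduces a Lipschitz variable-coefficient matrix, equal to the identity at $0$, and forces one to work with the adapted $E$ and $H$ of \eqref{E}--\eqref{H}, but does not change the vanishing order of $u$ at $0$.

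The heart of the argument is the Pohozaev-type identity of Proposition~\ref{N'}. I would obtain it by the approximation procedure sketched in the introduction: replace $B_r\setminus\Gamma$ by smooth domains $\Omega_{n,r}$, solve the corresponding Neumann problems on each $\Omega_{n,r}$, apply the classical Rellich--Ne\u{c}as identity \eqref{Rellich-Necas} to these regular solutions, and pass to the limit in $n$, crucially using the $n$-uniform Sobolev extension operator of Proposition~\ref{pon ex}. Proposition~\ref{N'}, together with the control on $f$ provided by \eqref{h1} or \eqref{h2}, yields a differential inequality of the form $\mathcal{N}'(r)\geq -C(1+\mathcal{N}(r))$ on $(0,r_0)$, from which standard Gronwall-type manipulations give the boundedness of $\mathcal{N}$ and the existence of a limit $\gamma:=\lim_{r\to 0^+}\mathcal{N}(r)\in[0,+\infty)$.

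Boundedness of $\mathcal{N}$ is converted into a quantitative lower bound on $H$ via the identity
\[
\frac{d}{dr}\log H(r)=\frac{2}{r}\mathcal{N}(r)+\text{remainder},
\]
which upon integration on $(r,r_0)$ produces $H(r)\geq C\,r^{2\gamma+\e}$ for $r$ small and some $C>0$. On the other hand, the hypothesis $u(x)=O(|x|^k)$ as $|x|\to 0^+$, together with elementary manipulations of the definition of $H$, yields $H(r)=O(r^{2k})$ for every $k\in\mathbb{N}$. Choosing $k$ larger than $\gamma+\e/2$ forces $H\equiv 0$ on some interval $(0,r_0)$, hence $u\equiv 0$ on $B_{r_0}\setminus\Gamma$. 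Propagation of this vanishing to the whole ball $B_R$ is then achieved by a classical interior strong unique continuation principle in the class prescribed by \eqref{h1} or \eqref{h2} (for instance \cite{koch2001carleman}), applied to points of $B_R\setminus\Gamma$, combined with a covering/connectedness argument that handles the smooth part of $\Gamma$ via reflection across $\Gamma$ (legitimate because the Neumann data vanish on both sides).

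The main obstacle I expect is the derivation of Proposition~\ref{N'}. As the authors emphasise, the Neumann boundary condition produces in the Rellich--Ne\u{c}as computation extra terms whose sign is opposite to the Dirichlet case treated in \cite{de2021unique}, so that the star-shapedness trick of that paper no longer suffices; the combined effect of straightening $\Gamma$, choosing the profile of $\Omega_{n,r}$ carefully, and establishing the uniform extension operator of Proposition~\ref{pon ex} is what ultimately makes the limit $n\to\infty$ yield an inequality with the right sign. All the remaining ingredients above are, by comparison, routine once Proposition~\ref{N'} is in hand.
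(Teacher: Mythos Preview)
Your strategy matches the paper's. One logical streamlining: the paper argues by contradiction from the start, assuming $u\not\equiv 0$ in $B_R$, which via Proposition~\ref{H positive} (itself relying on interior unique continuation) guarantees $H(r)>0$ on all of $(0,r_0]$, so that $\mathcal{N}$ is well-defined and the lower bound \eqref{H lower bound} is available; confronting this with $H(r)=O(r^{2k})$ then yields an immediate contradiction, which makes your final propagation step superfluous and also sidesteps the slight circularity in your phrase ``forces $H\equiv 0$'' (the derivation of the lower bound already presupposes $H>0$).
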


In Theorem \ref{u-asymptotic} we provide a classification of blow-up
limits  in terms of the eigenvalues of 
the following
problem
\begin{equation}\label{sphereprob}
  \begin{cases}
    -\Delta_{\mathbb{S}^{N-1}} \psi =\mu \psi, &\text{ on } \mathbb{S}^{N-1} \setminus \Sigma,\\
    \dfrac{\partial^+\psi}{\partial\nu^+}=\dfrac{\partial^-\psi}{\partial\nu^-}=0,
    &\text{ on } \Sigma,
  \end{cases}
\end{equation}
on the unit $(N-1)$-dimensional sphere $\mathbb{S}^{N-1}:=\{x \in
\R^N: |x|=1\}$ 
with a cut on the
half-equator
\begin{equation*}
  \Sigma:=\{(x',x_{N-1},0)\in \mathbb{S}^{N-1}: x_{N-1} \ge 0\},
\end{equation*}
where, letting  $e_N:=(0,\dots, 1)$, 
\begin{align*}
	&\mathbb{S}^{N-1}_+:=\left\{(x',x_{N-1},x_N)\in
   \mathbb{S}^{N-1}: x_N >0 \right\},\quad 
	\mathbb{S}^{N-1}_-:=\left\{(x',x_{N-1},x_N)\in \mathbb{S}^{N-1}: x_N <0 \right\},
\end{align*} 
the boundary operators $\frac{\partial^\pm}{\partial\nu^\pm}$ are
defined as
\begin{equation*}
  \dfrac{\partial^+\psi}{\partial\nu^+}:=
  -\nabla_{\mathbb{S}_+^{N-1}}\left(\psi\big|_{\mathbb{S}^{N-1}_+}\right) \cdot e_N \quad\text{and}\quad
  \dfrac{\partial^-\psi}{\partial\nu^-}:=
  \nabla_{\mathbb{S}_-^{N-1}}\left(\psi\big|_{\mathbb{S}^{N-1}_-}\right)\cdot e_N,
\end{equation*}
see Section \ref{sec:neum-eigenv-mathbbsn} for the
  weak formulation of \eqref{sphereprob}.
In Section \ref{sec:neum-eigenv-mathbbsn} we prove that the set of the
eigenvalues of \eqref{sphereprob} is $\{\mu_k: k \in \mathbb{N}\}$
where 
\[
  \mu_k=\frac{k(k+2N-4)}{4}, \quad k \in \mathbb{N}.
\]
As a consequence of the classification of blow-up limits, we obtain
the following
unique continuation result from the edge with respect to crack points.
\begin{theorem}\label{t:ucp-crack}
  Let $u$ be a weak solution to \eqref{op} with $\Gamma$ as in
  \eqref{Gamma 2}--\eqref{Gamma} and $f$ satisfying either \eqref{h1}
  or \eqref{h2}.  Let us also assume that $u$ vanishes at $0$ at any
  order with respect to crack points, namely that either
  $\mathop{\rm Tr}^+_\Gamma u(z)=O(|z|^k)$ as $|z| \to 0^+$,
  $z\in\Gamma$, for all $k \in \mathbb{N}$ or
  $\mathop{\rm Tr}^-_\Gamma u(z)=O(|z|^k)$ as $|z| \to 0^+$,
  $z\in\Gamma$, for all $k \in \mathbb{N}$, where
  $\mathop{\rm Tr}^+_\Gamma u$, respectively
  $\mathop{\rm Tr}^-_\Gamma u$, denotes the trace of $u\big|_{B_R^+}$,
  respectively $u\big|_{B_R^-}$, on $\Gamma$.  Then $u\equiv0$ in
  $B_R$.
\end{theorem}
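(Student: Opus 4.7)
The plan is to argue by contradiction, combining the asymptotic classification of Theorem~\ref{u-asymptotic} with a unique continuation argument on the sphere, and then invoking Theorem~\ref{t:ucp}. If $u\not\equiv 0$, Theorem~\ref{t:ucp} prevents $u$ from vanishing to infinite order at $0$ in $B_R$, so Theorem~\ref{u-asymptotic} supplies an integer $k_0\in\mathbb{N}$ and a nontrivial eigenfunction $\psi$ of \eqref{sphereprob} with eigenvalue $\mu_{k_0}=k_0(k_0+2N-4)/4$ such that, as $\la\to 0^+$,
\[
\la^{-k_0/2}u(\la y)\longrightarrow |y|^{k_0/2}\psi(y/|y|),
\]
in a mode strong enough to retain information on one-sided traces on $\Gamma$.

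The second step is to convert the hypothesis on the crack trace of $u$ into vanishing of $\psi$ on $\Sigma$. Suppose, to fix ideas, that $\mathop{\rm Tr}^+_\Gamma u(z)=O(|z|^k)$ as $z\to 0$ in $\Gamma$ for every $k\in\mathbb{N}$. For every fixed $z\in\Gamma\setminus\{0\}$ of sufficiently small norm, the hypothesis yields $\la^{-k_0/2}|\mathop{\rm Tr}^+_\Gamma u(\la z)|\le C_k\,\la^{k-k_0/2}|z|^k$; choosing $k>k_0/2$ and letting $\la\to 0^+$ forces the limit $|z|^{k_0/2}\psi^+(z/|z|)$ to vanish, where $\psi^+$ denotes the trace of $\psi$ from $\mathbb{S}^{N-1}_+$. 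The assumption $\nabla g(0)=0$ guarantees that the dilated cracks $\la^{-1}\Gamma$ exhaust the half-hyperplane $\{x_{N-1}\ge 0,\ x_N=0\}$ as $\la\to 0^+$, so the vanishing propagates to give $\psi^+\equiv 0$ on the whole of $\Sigma$.

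The decisive step is to show that a nontrivial eigenfunction of \eqref{sphereprob} cannot have an identically vanishing one-sided trace on $\Sigma$. The operator $-\Delta_{\mathbb{S}^{N-1}}-\mu_{k_0}$ has real-analytic coefficients, hence by elliptic regularity $\psi$ is real analytic on the open set $\mathbb{S}^{N-1}\setminus\Sigma$. Exploiting the Neumann condition on $\Sigma$ from the upper side, one can extend $\psi\big|_{\mathbb{S}^{N-1}_+}$ by even reflection across a smooth interior piece of $\Sigma$, obtaining a local solution of an elliptic equation with analytic coefficients that has vanishing Dirichlet trace on $\Sigma$; Holmgren's uniqueness theorem then forces the extension to vanish in a spherical neighborhood of that piece, and analyticity on the connected hemisphere $\mathbb{S}^{N-1}_+$ propagates this vanishing throughout it. Since $\mathbb{S}^{N-1}\setminus\Sigma$ is connected (one may pass between the two hemispheres through the interior portion $\{x_{N-1}<0,\ x_N=0\}\cap\mathbb{S}^{N-1}$ of the domain), a further appeal to analytic continuation forces $\psi\equiv 0$, contradicting its nontriviality. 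The case in which it is $\mathop{\rm Tr}^-_\Gamma u$ that vanishes to infinite order is entirely symmetric, and Theorem~\ref{t:ucp} then yields $u\equiv 0$ in $B_R$.

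The step I expect to require the most care is the trace passage in the second paragraph: the Almgren-based blow-up naturally produces $H^1$-convergence, from which only $H^{1/2}$-convergence of the one-sided traces on $\Gamma$ follows, and this is insufficient to draw the pointwise conclusion $\psi^+\equiv 0$ directly. The planned remedy is to combine the uniform energy bounds coming from the monotonicity formula with Neumann elliptic regularity for $u$ up to each face of $\Gamma$ away from the edge, upgrading the blow-up convergence to locally uniform convergence of the rescaled solutions on compact subsets of $\overline{B_1^\pm}\setminus\{0\}$; once this upgrade is granted, the pointwise vanishing of $\psi^+$ on $\Sigma$ follows by inspection and the remaining steps go through as outlined.
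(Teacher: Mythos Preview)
Your overall strategy --- contradiction via blow-up, showing the one-sided trace of the spherical limit $\psi$ vanishes on $\Sigma$, then arguing this is incompatible with $\psi\not\equiv 0$ --- is sound and matches the paper's logic. The execution differs in two places, and in both the paper's route is shorter.

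First, the blow-up result you invoke (Proposition~\ref{u-asymptotic}) does not give convergence of $\lambda^{-k_0/2}u(\lambda\cdot)$; it gives convergence of $U(\lambda_{n_k}\cdot)/\sqrt{H(\lambda_{n_k})}$ along subsequences, and at this stage one does not yet know $H(\lambda)\sim c\lambda^{k_0}$ with $c>0$. More importantly, the paper sidesteps the pointwise trace passage you flag as delicate by working entirely in $L^2$. From the hypothesis one gets $\|\lambda^{-k}\mathop{\rm Tr}^+_{\tilde\Gamma}U(\lambda\cdot)\|_{L^2(B_1\cap\tilde\Gamma)}\to 0$ for every $k$; from the $H^1$ blow-up convergence and continuity of the trace map one gets
\[
\frac{\|\mathop{\rm Tr}^+_{\tilde\Gamma}U(\lambda_{n_k}\cdot)\|_{L^2(B_1\cap\tilde\Gamma)}}{\sqrt{H(\lambda_{n_k})}}
\longrightarrow
\big\|\mathop{\rm Tr}\nolimits^+_{\tilde\Gamma}\big(|y|^{k_0/2}\Psi(y/|y|)\big)\big\|_{L^2(B_1\cap\tilde\Gamma)}\neq 0.
\]
Dividing yields $\sqrt{H(\lambda_{n_k})}/\lambda_{n_k}^{k}\to 0$ for all $k$, which contradicts the lower bound~\eqref{H lower bound} directly. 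No elliptic regularity upgrade or pointwise convergence is needed.

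Second, the nonvanishing of the limit above is exactly the statement that a nontrivial eigenfunction cannot have identically zero one-sided trace on $\Sigma$; the paper packages this as Remark~\ref{rem:eigenfunctionnonzero}, proved by lifting $\Psi$ to the homogeneous harmonic function $W(x)=|x|^{k_0/2}\Psi(x/|x|)$ on $\R^N\setminus\tilde\Gamma$. If $\Psi^+\equiv 0$ on $\Sigma$ then $W$ satisfies both homogeneous Dirichlet and Neumann conditions on the upper face of $\tilde\Gamma$, and classical unique continuation forces $W\equiv 0$. Your Holmgren/analyticity argument on the sphere is a legitimate alternative, but the lift to $\R^N$ is more direct and avoids reflection constructions.
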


If $N\geq3$, a combination of the blow-up analysis with an expansion in Fourier series with respect to a
orthonormal basis made of eigenfunctions of \eqref{sphereprob}, allows us to classify the possible asymptotic homogeneity degrees
  of solutions at $0$.

\begin{theorem} \label{main-theorem} Let $N \ge3$ and let
  $u \in H^1(B_R\setminus \Gamma)$, $u\not\equiv0$,  be a non-trivial
  weak solution to
  \eqref{op}, with $\Gamma$ defined in  \eqref{Gamma
  2}--\eqref{Gamma} and $f$ satisfying either assumption \eqref{h1} or
  assumption \eqref{h2}. Then there exist $k_0\in \mathbb{N}$ and an
  eigenfunction $Y$ of problem \eqref{sphereprob}, associated to
  the eigenvalue $\mu_{k_0}$, such that, 
 letting
\[
\Phi(x):=|x|^{\frac{k_0}{2}}  Y\left(\frac{x}{|x|}\right),
\]
we have that
\begin{align*}
	&\lambda^{-\frac{k_0}{2}}u(\lambda \cdot) \to
          \Phi\quad\text{and}\quad \lambda^{1-\frac{k_0}{2}}
\left(\nabla_{B_R\setminus\Gamma}u\right)(\lambda
     \cdot)
     \to \nabla_{\R^N\setminus\tilde\Gamma}\Phi \quad
     \text{in } L^2(B_1)
\end{align*}
as $\la \to 0^+$, 
where 
\begin{equation}\label{Gammatilde}
	\tilde{\Gamma}:=\left\{ x=(x',x_{N-1},0) \in \R^N:x_{N-1} \ge 0\right\}
\end{equation}
and 
$\nabla_{B_R\setminus\Gamma}$ and $\nabla_{\R^N\setminus\tilde\Gamma}$
denote the distributional gradients in $B_R\setminus\Gamma$ and
$\R^N\setminus\tilde\Gamma$ respectively.

\end{theorem}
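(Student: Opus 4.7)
My plan is to follow the blow-up strategy developed in \cite{felli2010asymptotic,felli2012note,felli2013almgren}, adapted to the straightened cracked problem \eqref{straightenedproblem} and combined with a Fourier expansion in an $L^2$-orthonormal eigenbasis of \eqref{sphereprob}. The first step is to exploit the monotonicity of the Almgren frequency $\mathcal{N}$ established in Proposition \ref{N'}: since $\mathcal{N}(r)$ is bounded and monotone (up to a controllable lower order term coming from $f$), the limit $\gamma:=\lim_{r\to 0^+}\mathcal{N}(r)$ exists and is finite, and the energy and height estimates deriving from the monotonicity formula give doubling-type bounds on $H(r)$ and on the $H^1$ norms of scaled solutions.

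Next I would perform a blow-up analysis. Define the rescaled family
\[
w_\lambda(x):=\frac{u(\lambda x)}{\sqrt{H(\lambda)}},\qquad x\in B_{R/\lambda}\setminus\lambda^{-1}\Gamma,
\]
which satisfies a rescaled version of the straightened equation with a rescaled variable-coefficient matrix converging to the identity and a rescaled potential which is negligible thanks to \eqref{h1} or \eqref{h2}. The doubling-type bounds give uniform $H^1(B_1)$ estimates on $w_\lambda$ and normalization $\int_{\partial B_1}w_\lambda^2\,d\sigma=1$. Along a subsequence $\lambda_n\to 0^+$ one extracts a weak limit $W\in H^1(B_1)$, strong in $L^2$ on spheres; passing to the limit in the equation shows $W$ is harmonic in $B_1\setminus\tilde\Gamma$ with homogeneous Neumann conditions on $\tilde\Gamma$. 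Passing to the limit in the Pohozaev-type inequality and using that the limiting frequency equals $\gamma$ forces $W$ to be homogeneous of degree $\gamma$. Separating variables, $W(r\theta)=r^\gamma Y(\theta)$ with $Y$ a nontrivial eigenfunction of \eqref{sphereprob} whose eigenvalue is $\gamma(\gamma+N-2)$. Since by Section \ref{sec:neum-eigenv-mathbbsn} the eigenvalues are $\mu_k=k(k+2N-4)/4=(k/2)(k/2+N-2)$, we must have $\gamma=k_0/2$ for some $k_0\in\mathbb{N}$.

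The subtle step is to promote this subsequential, normalized convergence to the convergence of $\lambda^{-k_0/2}u(\lambda\cdot)$ claimed in the statement, which requires identifying $Y$ uniquely (not only up to subsequence) and showing that $H(\lambda)/\lambda^{k_0}$ has a finite, strictly positive limit. For this I would expand $u$ in Fourier series with respect to an $L^2(\mathbb{S}^{N-1}\setminus\Sigma)$-orthonormal basis $\{Y_{k,j}\}$ of eigenfunctions of \eqref{sphereprob}: set
\[
\varphi_{k,j}(r):=\int_{\mathbb{S}^{N-1}\setminus\Sigma} u(r\theta)\,Y_{k,j}(\theta)\,d\sigma(\theta),
\]
so that each $\varphi_{k,j}$ solves a scalar ODE of Bessel type with the eigenvalue $\mu_k$ and a right-hand side involving the projection of $fu$ (modulo lower order terms arising from the straightening diffeomorphism). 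Variation of parameters, together with the integrability on $f$ granted by \eqref{h1} or \eqref{h2} and the growth estimates on $H$, yields the precise asymptotic $\varphi_{k_0,j}(r)=\beta_j r^{k_0/2}+o(r^{k_0/2})$ as $r\to 0^+$ for the modes at level $\mu_{k_0}$, and $\varphi_{k,j}(r)=o(r^{k_0/2})$ for $k\neq k_0$, with at least one $\beta_j\neq 0$ because $u\not\equiv 0$. Defining $Y:=\sum_j\beta_j Y_{k_0,j}$ and $\Phi(x)=|x|^{k_0/2}Y(x/|x|)$ gives convergence of $\lambda^{-k_0/2}u(\lambda\cdot)$ to $\Phi$ in $L^2(B_1)$, using also Parseval and dominated convergence.

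Finally, the convergence of the gradients follows by upgrading the weak $H^1$ convergence to strong convergence. This is done by testing the rescaled equation against $\lambda^{-k_0/2}u(\lambda\cdot)-\Phi$, exploiting the Neumann condition to kill the boundary terms on $\tilde\Gamma$, and using the $L^2$ convergence already established together with the fact that the energy of $\Phi$ matches the limit of the energies of the rescaled functions (which in turn comes from the convergence $\mathcal{N}(\lambda)\to k_0/2$). The main obstacle I foresee is the precise ODE analysis for the Fourier coefficients in Step 3: the variable-coefficient matrix coming from the crack-straightening diffeomorphism produces nondiagonal coupling between different Fourier modes, and bounding these coupling terms uniformly — so that the perturbative analysis still yields the exact power $r^{k_0/2}$ and a nonzero limiting coefficient — is the delicate technical core, requiring careful use of the $C^{1,1}$ regularity \eqref{condition_g1} of $g$ and of \eqref{h1}/\eqref{h2}.
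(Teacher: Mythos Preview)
Your proposal is correct and follows essentially the same strategy as the paper: monotonicity of the Almgren frequency to obtain $\gamma=\lim_{r\to0^+}\mathcal N(r)$, a blow-up analysis yielding a homogeneous harmonic limit on $B_1\setminus\tilde\Gamma$ (hence $\gamma=k_0/2$ via the spectrum of \eqref{sphereprob}), and a Fourier expansion in the eigenbasis $\{Y_{k,i}\}$ together with an ODE analysis of the coefficients $\varphi_{k_0,i}$ to show $\lim_{\lambda\to0^+}\lambda^{-k_0}H(\lambda)>0$ and to identify the limiting profile independently of the subsequence. The only organizational differences are that the paper carries out the entire argument for the straightened function $U=u\circ F$ (transferring to $u$ only at the very end via $G_\lambda(x)=\lambda^{-1}F^{-1}(\lambda x)\to x$), and it obtains strong $H^1$ convergence of the normalized blow-up $W^\lambda$ already at the subsequential stage, by selecting a good radius $T_\lambda\in[1,2]$ on which $\int_{\partial B_{T_\lambda}}|\nabla W^\lambda|^2\,dS$ is controlled so that the boundary flux $A\nabla W^\lambda\cdot\nu$ converges weakly in $L^2(\partial B_1)$, rather than deducing it afterward from energy matching as you outline.
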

A more precise version of Theorem \ref{main-theorem}, 
  relating $k_0$ to the limit of a frequency function and
  characterizing the eigenfunction $Y$, will be proved in Section
  \ref{sec:colorr-height-funct}, see Theorem \ref{main-theorem-precise}.

The paper is organized as follows. In Section \ref{sec:boundary} an
equivalent problem in a domain with a straightened crack is
constructed. Sections \ref{sec:trac-embedd-space} contains  some trace and embedding
inequalities for the space  $H^1(B_{r}\setminus
\tilde\Gamma)$. Section \ref{sec:appr-probl} is devoted to the
construction of the approximating problems. In
Section \ref{sec:almgr-type-freq} we develop the
monotonicity argument, which is first used to prove Theorem \ref{t:ucp}
and later, in Section
\ref{sec:blow-up-analysis}, to perform a blow-up analysis and prove
Theorem \ref{t:ucp-crack}, taking into
account the structure of the spherical eigenvalue problem
\eqref{sphereprob} studied in Section
\ref{sec:neum-eigenv-mathbbsn}. Finally Theorem \ref{main-theorem} is
proved in Section \ref{sec:colorr-height-funct}.

\section{An equivalent problem with straightened crack}
\label{sec:boundary}
In this section we straighten the boundary of the crack in
  a neighbourhood of $0$.  
If $N \ge 3$ we use the local
diffeomorphism $F$ defined in \cite[Section 2]{de2021strong}, see also
\cite{adolfsson1997c1};
for the sake of clarity and completeness we summarize
its properties in Propositions \ref{diffeomorphism}
  and \ref{beta-properties} below, referring to  \cite[Section
  2]{de2021strong} for their proofs. If $N=2$, the crack is a segment and we simply take
$F=\mathop{\rm{Id}}$, where $\mathop{\rm{Id}}$ is the identity
function on $\R^2$.

\begin{proposition}\cite[Section 2]{de2021strong} \label{diffeomorphism}
Let $N\geq 3$ and $\Gamma$ be defined in \eqref{Gamma}
    with $g$ satisfying \eqref{condition_g1} and \eqref{condition_g2}.
  There exist $F=(F_1,\dots,F_N)\in C^{1,1}(\R^N, \R^N)$ and $r_1>0$ such that
  $F\big|_{B_{r_1}}:B_{r_1} \to F(B_{r_1})$  is a
  diffeomorphism of class $C^{1,1}$, 

  \begin{equation}\label{F-propeties}
 F(y',0,0)=(y',g(y'),0) \text{ for any } y' \in \R^{N-1},\quad
 \text{and} \quad F(\tilde \Gamma\cap B_{r_1})=\Gamma\cap F(B_{r_1}),
\end{equation}
with $\tilde\Gamma$ as in \eqref{Gammatilde}.
Furthermore, letting $J_F(y)$ be the Jacobian matrix of
$F$ at $y=(y',y_{N-1},y_N) \!\in~\!\!B_{r_1}$  and
\begin{equation}\label{eq:defA}
  A(y):=|\det J_F(y)|(J_F(y))^{-1}((J_F(y))^{-1})^T,
\end{equation}
the following properties hold:
\begin{enumerate}[\rm i)]\setlength\itemsep{10pt}
\item $J_F$ depends only on the variable $y''=(y',y_{N-1})$ and
\begin{equation}
  \label{J-F}
J_F(y)=J_F(y'')={\mathop{\rm Id}}_N+O(|y''|) \quad
\text{as } |y''|\to 0^+,
\end{equation}
where $\mathop{\rm Id}_{N}$ denotes the identity $N\times N$ matrix and
$O(|y''|)$ denotes a matrix with all entries being $O(|y''|)$ as
$|y''|\to 0^+$;
\item 
$\det{ J_F}(y)=\det{ J_F}(y',y_{N-1}) =1+O(|y'|^2)+O(y_{N-1})$ as
$|y'| \to 0^+$ and $y_{N-1}\to0$;
\item 
$\pd{F_i}{y_N}=\pd{F_N}{y_i}=0$ for
                        any  $i=1,\dots, N-1$ and $\pd{F_N}{y_N}=1$;
                      \item the matrix-valued function $A$ can be written as
                        \begin{equation}
  \label{matrix-A}
A(y)=A(y', y_{N-1})=\left(\begin{array}{c|c}
D(y', y_{N-1}) &0 \\ \hline
0&\det{J_F(y', y_{N-1})}
\end{array}\right),
\end{equation}
with
\begin{equation}\label{matrix-D}
	D(y',y_{N-1})=\left(\begin{array}{c|c}
	\mathop{\rm Id}_{N-2}+O(|y'|^2)+O(y_{N-1}) &O(y_{N-1}) \\ \hline
	O(y_{N-1})&1+O(|y'|^2)+O(y_{N-1})
	\end{array}\right),
\end{equation}
where 
$\mathop{\rm Id}_{N-2}$ denotes the identity $(N-2)\times (N-2)$ 
matrix and $O(y_{N-1})$, respectively $O(|y'|^2)$, denotes
blocks of matrices with all entries being $O(y_{N-1})$ as $y_{N-1}\to 0$, 
respectively $O(|y'|^2)$ as $|y'|\to 0$.
\item 
 $A$ is  symmetric   with coefficients of class $C^{0,1}$ and  
\begin{align}
&\norm{A(y)}_{\mathcal{L}(\R^N,\R^N)} \le 2 \quad \text{for all }y \in B_{r_1},\label{A-oper-norm}\\ 
& \frac12 |z|^2 \le A(y)z \cdot z \le 2 |z|^2 \quad \text{for all } z \in \R^N \text{ and } y \in B_{r_1}. \label{ellipticity}
\end{align} 
\end{enumerate}
\end{proposition}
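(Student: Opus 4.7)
The plan is to construct $F$ explicitly from the defining function $g$ and then verify properties (i)--(v) by direct computation, following \cite[Section~2]{de2021strong} and \cite{adolfsson1997c1}. A naive graph straightening $(y',y_{N-1},y_N)\mapsto (y',y_{N-1}+g(y'),y_N)$ would satisfy \eqref{F-propeties} and (iii), but it would produce a matrix $A$ whose off-diagonal entries are of order $O(|y'|)$ instead of $O(y_{N-1})$ as required by \eqref{matrix-D}. I would therefore use normal coordinates to the edge inside the hyperplane $\{y_N=0\}$: setting
\[
  n(y'):=\frac{(-\nabla g(y'),1)}{\sqrt{1+|\nabla g(y')|^2}}\in\R^{N-1},
\]
I define
\[
  F(y',y_{N-1},y_N):=\bigl((y',g(y'))+y_{N-1}\,n(y'),\;y_N\bigr).
\]
Since $g\in C^{1,1}$, so is $F$; the identity $F(y',0,0)=(y',g(y'),0)$ is immediate, and the remaining part of \eqref{F-propeties} will follow once $F$ is shown to be a local diffeomorphism, because $n(0)=e_{N-1}$ points into $\{x_{N-1}>g(x')\}$ and $\{y_{N-1}\ge 0\}$ is thus mapped onto a neighbourhood of the edge inside $\{x_{N-1}\ge g(x')\}$.

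Next I would prove that $F$ is a local $C^{1,1}$-diffeomorphism on some ball $B_{r_1}$. Since $\nabla g(0)=0$ by \eqref{condition_g2}, one has $n(0)=e_{N-1}$ and hence $J_F(0)=\mathop{\rm Id}_N$, so invertibility near $0$ is given by the inverse function theorem. Properties (i) and (iii) then read off at once from the explicit form: $F_1,\dots,F_{N-1}$ are independent of $y_N$, while $F_N(y)=y_N$. A direct computation of the upper-left $(N-1)\times(N-1)$ block of $J_F$ gives
\[
  \det J_F(y',y_{N-1})=\sqrt{1+|\nabla g(y')|^2}+O(y_{N-1})=1+O(|y'|^2)+O(y_{N-1}),
\]
using $|\nabla g(y')|=O(|y'|)$, which proves (ii). The ellipticity and boundedness bounds (v) will then follow from $A(0)=\mathop{\rm Id}_N$ and the $C^{0,1}$-regularity of $A$, after possibly shrinking $r_1$.

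The delicate point is the block structure \eqref{matrix-A}--\eqref{matrix-D} in (iv). The key observation is that on the slice $\{y_{N-1}=0\}$ the columns of $J_F$ are mutually orthogonal: the vectors $\partial_{y_i}F=(e_i,\partial_i g(y'),0)$ for $i\le N-2$ are tangent to the edge $M:=\{(y',g(y'),0)\}$; the vector $\partial_{y_{N-1}}F=(n(y'),0)$ is by construction normal to $M$ inside $\R^{N-1}\times\{0\}$; and $\partial_{y_N}F=e_N$ is orthogonal to both. A short computation using the Sherman--Morrison formula on $\mathop{\rm Id}_{N-2}+\nabla g\otimes\nabla g$ then yields $A(y',0)$ in the diagonal form prescribed by \eqref{matrix-D}, with the non-trivial diagonal entries equal to $1+O(|y'|^2)$ and the lower-right corner equal to $\det J_F(y',0)$. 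Since $A$ is $C^{0,1}$ in $y_{N-1}$, the off-diagonal entries of $D$ for general $y_{N-1}$ are $y_{N-1}$ times a bounded function, giving the required $O(y_{N-1})$ bound. Securing precisely this sharpness is the main obstacle, and is what forces the use of normal coordinates rather than the simpler graph change of variables; it will be crucial later for controlling the error terms produced by the change of variables in the Pohozaev-type identity of Proposition \ref{N'}.
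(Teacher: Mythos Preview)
The paper does not supply its own proof of this proposition; it merely records the statement and refers to \cite[Section~2]{de2021strong} (building on \cite{adolfsson1997c1}) for the argument. Your Fermi-coordinate map
\[
  F(y',y_{N-1},y_N)=\bigl((y',g(y'))+y_{N-1}\,n(y'),\;y_N\bigr)
\]
is precisely the construction used in those references, and your diagnosis of why the naive graph change of variables is inadequate for \eqref{matrix-D} --- the columns of $J_F$ must be pairwise orthogonal on $\{y_{N-1}=0\}$ so that the off-diagonal blocks of $D$ are $O(y_{N-1})$ rather than only $O(|y'|)$ --- is exactly the point. The Sherman--Morrison computation of $A(y',0)$ and the reduction of (v) to $A(0)=\mathop{\rm Id}_N$ plus continuity are also as in the cited source.

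One genuine gap: the sentence ``Since $g\in C^{1,1}$, so is $F$'' is too quick. With $g$ only $C^{1,1}$ the unit normal $n$ is merely $C^{0,1}$, so $\partial_{y_j}F$ for $j\le N-2$ contains the term $y_{N-1}\,\partial_{y_j}n(y')$, in which $\partial_{y_j}n\in L^\infty$ but is not Lipschitz in $y'$. Thus $J_F$ --- and hence $A$ --- is Lipschitz in $y_{N-1}$ (uniformly in $y'$), which is what yields the $O(y_{N-1})$ structure in \eqref{matrix-D}, but need not be Lipschitz in $y'$ for $y_{N-1}\neq 0$. Since the full $C^{0,1}$ regularity of $A$ is invoked later (e.g.\ in the Rellich--Ne\u{c}as identity \eqref{Rellich-Necas}), you should either justify this more carefully or note that the argument is clean for $g\in C^2$ and indicate how the $C^{1,1}$ case is handled in \cite{de2021strong,adolfsson1997c1}.
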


We observe that
 \begin{equation}\label{eq:N2mat}
   A=\mathop{\rm Id}\nolimits_{2}\quad\text{if }N=2.
\end{equation}
Moreover \eqref{matrix-A}--~\eqref{matrix-D} easily imply that 
\begin{equation}\label{A-O}
A(y)=A(y'')=\mathop{\rm Id}\nolimits_{N}+O(|y''|) \quad \text{as } |y''| \to 0^+.
\end{equation}
Under the same assuptions and with the same notation of
  Proposition \ref{diffeomorphism}, we define
\begin{equation}\label{mu-beta}
	\mu(y):=\frac{A(y)y \cdot y}{|y|^2} \quad \text { and } \quad \beta(y):=\frac{A(y)y }{\mu(y)} 
\quad \text{for any } y\in B_{r_1}\setminus \{0\}.
	\end{equation}

\begin{proposition}\cite[Section 2]{de2021strong} \label{beta-properties}
Under the same assumptions as
  Proposition \ref{diffeomorphism}, let $\mu$ and $\beta$ be as in
  \eqref{mu-beta}.
Then, possibly choosing $r_1$ smaller from the beginning, 
\begin{align}
		&\frac{1}{2}\le\mu(y) \le 2 \quad \text{for any } y\in B_{r_1}\setminus \{0\}, \label{mu-estimates} \\
		&\mu(y)=1 +O(|y|) \quad \text{as } |y| \to 0^+, \label{mu-O}\\
		&\nabla \mu(y)=O(1) \quad  \text{as } |y| \to 0^+.\label{nabla-mu-O}
\end{align} 
Moreover $\beta$ is well-defined and
\begin{align}
\label{eq:27}&\beta(y)=y +O(|y|^2)=O(|y|) \quad \text{as } |y| \to 0^+,\\ \label{beta-estimate}
&J_{\beta}(y)=A(y)+O(|y|)=\mathop{\rm Id}\nolimits_N+O(|y|) \quad \text{as } |y| \to 0^+, \\
&\mathop{\rm div}(\beta)(y)=N+O(|y|) \quad \text{as } |y| \to 0^+.\label{div-beta}
\end{align}
\end{proposition}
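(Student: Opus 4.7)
The plan is to verify each displayed estimate by direct computation, using systematically the expansion $A(y) = \mathop{\rm Id}_N + O(|y''|)$ from \eqref{A-O}, the ellipticity \eqref{ellipticity}, and the fact that $A$ has $C^{0,1}$ entries (so the partial derivatives $\partial_k A_{ij}$ are a.e.\ bounded on $B_{r_1}$). Since $|y''|\le|y|$, we may replace $O(|y''|)$ by $O(|y|)$ whenever convenient.

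First I would handle $\mu$. The two-sided bound \eqref{mu-estimates} is immediate from \eqref{ellipticity} after dividing by $|y|^2$. Writing $A(y) = \mathop{\rm Id}_N + B(y)$ with $B(y) = O(|y|)$ (by \eqref{A-O}), we get
\begin{equation*}
\mu(y) = 1 + \frac{B(y)y\cdot y}{|y|^2} = 1 + O(|y|),
\end{equation*}
which is \eqref{mu-O}. For \eqref{nabla-mu-O} the naive differentiation of $\mu$ produces the term $-2\mu(y)y_k/|y|^2$, which is unbounded; the point is to work with $\mu-1$ instead. Setting $h(y):=B(y)y\cdot y = B_{ij}(y)y_iy_j$, the bound $|\partial_k B_{ij}|\le C$ a.e.\ gives $|\partial_k h(y)|\le C|y|^2$ and $h(y)=O(|y|^3)$, so
\begin{equation*}
\partial_k\mu(y) = \frac{\partial_k h(y)}{|y|^2} - \frac{2\,h(y)\,y_k}{|y|^4} = O(1),
\end{equation*}
which gives \eqref{nabla-mu-O}. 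This cancellation is the one slightly delicate point.

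Next I would treat $\beta$. Well-definedness follows from \eqref{mu-estimates}. From $\mu(y)=1+O(|y|)$ we get $1/\mu(y) = 1 + O(|y|)$, and from $A(y)=\mathop{\rm Id}_N+O(|y|)$ we get $A(y)y = y + O(|y|^2)$. Multiplying these two expansions yields
\begin{equation*}
\beta(y) = (y + O(|y|^2))(1+O(|y|)) = y + O(|y|^2),
\end{equation*}
which is \eqref{eq:27}. For the Jacobian \eqref{beta-estimate}, I differentiate $\beta_i = (A_{ij}y_j)/\mu$ and obtain
\begin{equation*}
\partial_k\beta_i(y) = \frac{(\partial_k A_{ij})y_j}{\mu(y)} + \frac{A_{ik}(y)}{\mu(y)} - \frac{(A(y)y)_i}{\mu(y)^2}\,\partial_k\mu(y).
\end{equation*}
The first term is $O(|y|)$ (bounded derivatives times $y_j$), the third term is $O(|y|)\cdot O(1) = O(|y|)$ by \eqref{eq:27} for $A(y)y$ and \eqref{nabla-mu-O}, and the middle term equals $A_{ik}(y) + O(|y|)$ by \eqref{mu-O}. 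By symmetry of $A$ this gives $J_\beta(y) = A(y) + O(|y|)$, and combining with \eqref{A-O} yields $J_\beta(y) = \mathop{\rm Id}_N + O(|y|)$.

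Finally, \eqref{div-beta} is just the trace of \eqref{beta-estimate}: $\mathop{\rm div}(\beta)(y) = \mathop{\rm tr}(J_\beta(y)) = \mathop{\rm tr}(\mathop{\rm Id}_N) + O(|y|) = N + O(|y|)$. The only step that is not completely mechanical is the bound \eqref{nabla-mu-O}, where one must exploit the cancellation of the would-be singular contributions coming from $(\mathop{\rm Id}_N\, y)\cdot y = |y|^2$; everything else is a bookkeeping exercise built on Proposition \ref{diffeomorphism}.
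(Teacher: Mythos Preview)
Your argument is correct. The paper does not supply its own proof of this proposition but defers to \cite[Section~2]{de2021strong}; your direct verification from \eqref{A-O}, \eqref{ellipticity}, and the Lipschitz regularity of $A$ is exactly the kind of elementary computation one expects there, and all the estimates check out (in particular your handling of $\nabla\mu$ via $h(y)=(A(y)-\mathop{\rm Id}_N)y\cdot y$ correctly captures the cancellation of the singular terms). One very minor remark: when you write ``the bound $|\partial_k B_{ij}|\le C$ a.e.\ gives $|\partial_k h(y)|\le C|y|^2$ and $h(y)=O(|y|^3)$'', note that $\partial_k h$ also contains the cross terms $2B_{kj}y_j$, which are $O(|y|^2)$ because $B=O(|y|)$ from \eqref{A-O} (not merely from the derivative bound); you are implicitly using both pieces of information, and it would be cleaner to say so.
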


We also define $dA(y)zz$, for every $z=(z_1,\dots,z_N) \in \R^N$ and
$y \in B_{r_1}$, as the vector of $\R^N$ with $i$-th component, for
$i=1,\dots,N$, given by
\begin{equation}\label{dA}
(dA(y)zz)_i=\sum_{h,k=1}^N\pd{a_{kh}}{y_i}z_h z_k,
\end{equation}
where we have defined the matrix $A=(a_{k,h})_{k,h=1,\dots, N}$ in \eqref{eq:defA}. 

\begin{remark}\label{tilde-f-regularity}
For any measurable function $f: F(B_{r_1})\to \R$ we set 
\[
  \tilde f:B_{r_1}\to \R,\quad \tilde{f}:=|\det J_F| \,(f \circ F).
\]
Then, in view of i) and ii) in Proposition \ref{diffeomorphism}, the function
$\tilde{f}$ satisfies assumptions \eqref{h1} or \eqref{h2} on
$B_{r_1}$ if and only if $f$ satisfies assumptions \eqref{h1} or
\eqref{h2} on $F(B_{r_1})$.
	
\end{remark} 
It is easy to see that, if $u$ is a solution to \eqref{op}, then the
function $U:=u \circ F$ belongs to
$H^1(B_{r_1}\setminus \tilde \Gamma)$ and is a weak solution of the
problem
\begin{equation}\label{straightenedproblem}\begin{cases}
		-\mathop{\rm{div}}(A\nabla U)=\tilde{f} u, &\text{in } B_{r_1} \setminus \tilde \Gamma,  \\[5pt]
		A\nabla^+ U \cdot \nu^+=A\nabla^- U \cdot \nu^-=0,
		&\text{on } \tilde\Gamma,			
	\end{cases}
\end{equation}
where 
\begin{equation*}
	\nabla^+ U=\nabla \left(U\big|_{B_{r_1}^+}\right),\quad
        \nabla^- U=\nabla \left(U\big|_{B_{r_1}^-}\right),
        \quad \text{and } \nu^-=-\nu^+=(0,\dots, 1).
\end{equation*}
By saying that $U$ a weak solution to
  \eqref{straightenedproblem} we mean that $U \in H^1(B_{r_1} \setminus \tilde\Gamma)$
and
\begin{equation*}
  \int_{B_{r_1}\setminus\tilde \Gamma}(A\nabla U\cdot \nabla \phi -\tilde f U \phi) \, dy=0
\end{equation*}
for all $\phi \in H_{0,\partial B_{r_1}}^1(B_{r_1} \setminus \tilde \Gamma)$.
\section{Traces and embeddings for the space $H^1(B_{r_1}\setminus\tilde\Gamma)$}
\label{sec:trac-embedd-space}
In this section, we present some trace and embedding
inequalities for the space  $H^1(B_{r_1}\setminus \tilde\Gamma)$ which will be
used throughout the paper. 

We define the even reflection operators
\begin{align} \label{even re +}
	&\mathcal{R}^{+}(v)(y',y_{N-1},y_N)=v(y',y_{N-1},|y_N|),\\
	\label{even re -}
	&\mathcal{R}^{-}(v)(y',y_{N-1},y_N)=v(y',x_{N-1},-|y_N|),
\end{align}
and observe that, for all $r>0$,
$\mathcal{R}^{+}:H^1(B_r\setminus\tilde\Gamma)\to H^1(B_r)$ and
$\mathcal{R}^{-}:H^1(B_r\setminus\tilde\Gamma)\to H^1(B_r)$.  We have that
$\mathcal{R}^+(v),\mathcal{R}^-(v) \in L^p(B_r)$ for some
$p \in [1,\infty)$ if and only if $v \in L^p(B_r)$; in such a case we
have that 
\begin{equation} \label{even re p1}
	\norm{\mathcal{R}^{+}(v)}_{L^p(B_r)}^p = 2
	\norm{v}_{L^p(B_r^+)}^p,
	\quad \norm{\mathcal{R}^-(v)}_{L^p(B_r)}^p = 2 \norm{v}_{L^p(B_r^-)}^p,
\end{equation}
and 
\begin{equation}\label{even re p2}
	\norm{v}^p_{L^p(B_r)} 	=\frac{1}{2}\left(\norm{\mathcal{R}^{+}(v)}^p_{L^p(B_r)}+\norm{\mathcal{R}^{-}(v)}_{L^p(B_r)}^p\right).
\end{equation}
Furthermore, for every $v \in H^1(B_r \setminus \tilde\Gamma)$,
\begin{equation} \label{even re nabla}
	\int_{B_r \setminus \tilde \Gamma} |\nabla v|^2 \, dy = \frac{1}{2}\left(\int_{B_r} |\nabla\mathcal{R^+}(v)|^2 \, dy+\int_{B_r} |\nabla\mathcal{R^-}(v)|^2 \, dy\right).
\end{equation}

\begin{proposition} \label{trace r}
	For any $r >0$ there exists a linear continuous trace operator 
	\begin{equation*}
		\gamma_r:H^1(B_r \setminus \tilde\Gamma) \rightarrow L^2(\partial B_r).
              \end{equation*}
              Furthermore $\gamma_r$ is compact.
\end{proposition}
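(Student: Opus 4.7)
The plan is to reduce to the classical trace theorem on the smooth ball $B_r$ via the even reflection operators $\mathcal{R}^\pm$ introduced above, which map $H^1(B_r\setminus\tilde\Gamma)$ continuously into $H^1(B_r)$. Since $B_r$ is a smooth bounded domain, the standard trace operator $\gamma:H^1(B_r)\to H^{1/2}(\partial B_r)$ is linear and continuous, and the embedding $H^{1/2}(\partial B_r)\hookrightarrow L^2(\partial B_r)$ is compact; hence $\gamma$, viewed as a map from $H^1(B_r)$ into $L^2(\partial B_r)$, is continuous and compact.

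Given $v\in H^1(B_r\setminus\tilde\Gamma)$, writing $\partial B_r^{\pm}:=\partial B_r\cap\{\pm y_N>0\}$, I would define $\gamma_r(v)$ to be equal to $\gamma(\mathcal{R}^+(v))$ on $\partial B_r^+$ and equal to $\gamma(\mathcal{R}^-(v))$ on $\partial B_r^-$. This prescription is consistent off the equator $\partial B_r\cap\{y_N=0\}$, which has zero $(N-1)$-dimensional surface measure and may be discarded. Linearity is immediate from the linearity of $\mathcal{R}^\pm$ and of $\gamma$. Applying the classical trace estimate for $H^1(B_r)$ to each reflection and invoking \eqref{even re p1} together with \eqref{even re nabla}, I would obtain
\begin{equation*}
\|\gamma_r(v)\|_{L^2(\partial B_r)}^2
\leq C\bigl(\|\mathcal{R}^+(v)\|_{H^1(B_r)}^2+\|\mathcal{R}^-(v)\|_{H^1(B_r)}^2\bigr)
\leq 2C\,\|v\|_{H^1(B_r\setminus\tilde\Gamma)}^2,
\end{equation*}
which proves the continuity of $\gamma_r$.

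For compactness, I would exploit that $\mathcal{R}^\pm$ are bounded linear operators into $H^1(B_r)$: if $v_n\rightharpoonup 0$ weakly in $H^1(B_r\setminus\tilde\Gamma)$, then $\mathcal{R}^\pm(v_n)\rightharpoonup 0$ weakly in $H^1(B_r)$, and the compactness of the classical trace then yields $\gamma(\mathcal{R}^\pm(v_n))\to 0$ strongly in $L^2(\partial B_r)$. Combining the contributions on the two hemispheres gives $\gamma_r(v_n)\to 0$ in $L^2(\partial B_r)$, as required. I do not expect any substantial obstacle; the only mildly delicate point is the consistency of the two-piece definition on the equatorial circle of $\partial B_r$, which is settled by its negligibility as an $(N-1)$-dimensional subset of $\partial B_r$.
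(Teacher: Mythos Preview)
Your argument is correct and follows essentially the same decomposition as the paper: split $\partial B_r$ into its upper and lower hemispheres and invoke classical trace theory on each piece, noting that the equator has zero surface measure. The only cosmetic difference is that the paper works directly with the Lipschitz half-balls $B_r^\pm$ and their trace operators $\gamma_r^\pm:H^1(B_r^\pm)\to L^2(\partial B_r^\pm\cap\partial B_r)$, whereas you first pass through the even reflections $\mathcal{R}^\pm$ to reduce to the full ball; both routes yield the same operator $\gamma_r$.
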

\begin{proof}
	Since $B^+_r$ and $B^-_r$ are Lipschitz domains, there exist two
	linear, continuous and compact trace operators
	$\gamma^+_r:H^1(B^+_r) \rightarrow L^2(\partial
	B^+_r\cap \partial B_r)$ and
	$\gamma^-_r:H^1(B^-_r) \rightarrow L^2(\partial
	B^-_r\cap \partial B_r)$. By setting
	\[
	\gamma_r(v)(y):=
	\begin{cases}
		\gamma^+_r(v)(y), &\text{ if } y_N > 0, \\
		\gamma^-_r(v)(y), &\text{ if } y_N <0,	
	\end{cases}
      \]
 we complete the proof. 
\end{proof}
Letting $\gamma_r$ be the trace operator introduced in Proposition
\ref{trace r}, we observe that 
\begin{equation} \label{even re part} \int_{\partial B_r}
  |\gamma_r(v)|^2 \, dS = \frac{1}{2}\left( \int_{\partial B_r}
    |\gamma_r(\mathcal{R^+}(v))|^2 \, dS+\int_{\partial B_r}
    |\gamma_r(\mathcal{R^-}(v))|^2 \, dS \right)
\end{equation}
 for every $v \in H^1(B_r \setminus \tilde\Gamma)$.
With a slight abuse of notation we will often write  $v$ instead of $\gamma_r(v)$ on $\partial B_r$.

\begin{proposition}\label{hardy}
  If $N \ge 3 $ and $r>0$, then, for any
  $v \in H^1(B_r\setminus{\tilde\Gamma})$,
	\begin{equation} \label{Hardy ineq}
		\Bigr(\frac{N-2}{2}\Bigl)^2\int_{B_r}\frac{v^2}{|x|^2} \, dx \le \int_{B_r \setminus {\tilde\Gamma}} |\nabla v |^2 \, dx +\frac{N-2}{2r}\int_{\partial B_r } v^2\, dS.
	\end{equation}
\end{proposition}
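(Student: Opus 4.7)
The plan is to reduce \eqref{Hardy ineq} to the classical Hardy inequality on the full ball $B_r$ (without any cut) by means of the even reflection operators $\mathcal{R}^+$ and $\mathcal{R}^-$ introduced in \eqref{even re +}--\eqref{even re -}. As already observed right before \eqref{even re p1}, for any $v\in H^1(B_r\setminus\tilde\Gamma)$ both $\mathcal{R}^+(v)$ and $\mathcal{R}^-(v)$ belong to $H^1(B_r)$, since they are even reflections across the flat hyperplane $\{y_N=0\}$ of the restrictions $v|_{B_r^\pm}\in H^1(B_r^\pm)$.

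The first step is the standard Hardy inequality: for every $w\in H^1(B_r)$ with $N\geq 3$,
\[
	\left(\frac{N-2}{2}\right)^2 \int_{B_r}\frac{w^2}{|x|^2}\,dx \le \int_{B_r}|\nabla w|^2\,dx + \frac{N-2}{2r}\int_{\partial B_r} w^2\,dS.
\]
I would derive this from the identity $\mathop{\rm div}(x/|x|^2)=(N-2)/|x|^2$. Integrating by parts on $B_r\setminus B_\varepsilon$ and letting $\varepsilon\to 0^+$ (which is legitimate for $N\geq 3$, since $w^2/|x|$ is locally integrable by Sobolev embeddings) yields
\[
	(N-2)\int_{B_r}\frac{w^2}{|x|^2}\,dx = -2\int_{B_r}\frac{w\,\nabla w\cdot x}{|x|^2}\,dx + \frac{1}{r}\int_{\partial B_r}w^2\,dS.
\]
The weighted Cauchy--Schwarz inequality $2|ab|\le ta^2+t^{-1}b^2$ with the optimal choice $t=(N-2)/2$ applied to the first term on the right then gives the asserted bound.

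The second step is to apply the above inequality to $w=\mathcal{R}^+(v)$ and to $w=\mathcal{R}^-(v)$, sum the two resulting inequalities, and divide by $2$. The gradient term is handled by \eqref{even re nabla}, the boundary term by \eqref{even re part}, and for the weighted $L^2$-term one uses the analogous identity
\[
	\int_{B_r}\frac{v^2}{|x|^2}\,dx \;=\; \frac{1}{2}\left(\int_{B_r}\frac{|\mathcal{R}^+(v)|^2}{|x|^2}\,dx + \int_{B_r}\frac{|\mathcal{R}^-(v)|^2}{|x|^2}\,dx\right),
\]
which follows by the change of variables $y_N\mapsto -y_N$, since $|x|^2=|y'|^2+y_{N-1}^2+y_N^2$ is invariant under this reflection, in the same spirit as \eqref{even re p2}. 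Combining these three identities yields exactly \eqref{Hardy ineq}.

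No serious obstacle is anticipated: the argument is essentially a symmetrization reducing the cracked setting to the classical one. The only small verification concerns the invariance of $|x|^{-2}$ under the reflection across $\{y_N=0\}$, which is obvious, and the validity of the limit $\varepsilon\to 0^+$ in the integration-by-parts identity for the standard Hardy inequality on $B_r$, which is a well-known computation for $N\geq 3$.
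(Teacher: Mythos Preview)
Your proof is correct and follows essentially the same approach as the paper: apply the classical Hardy inequality on the full ball $B_r$ to the even reflections $\mathcal{R}^+(v)$ and $\mathcal{R}^-(v)$, then average using \eqref{even re nabla}, \eqref{even re part}, and the weighted analogue of \eqref{even re p2}. The paper simply cites \cite{wang2003hardy} for the Hardy inequality on $B_r$, whereas you derive it directly via the divergence identity and Cauchy--Schwarz; both are fine.
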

\begin{proof}
	By scaling,  \cite[Theorem 1.1]{wang2003hardy} proves the claim for
	$\mathcal{R}^+(v)$ and $\mathcal{R}^-(v)$. Then we conclude by
	\eqref{even re p2}, 
	\eqref{even re nabla}, and \eqref{even re part}.
\end{proof}

\begin{proposition} \label{sobolev ineq}
Let $N\geq 2$ and $q\geq1$ be such that $1\leq q\leq 2^*=\frac{2N}{N-2}$
	if $N\geq3$ and $1\leq q<\infty$ if $N=2$. Then
	\begin{equation*}
          H^1(B_r\setminus{\tilde\Gamma})\subset  L^q(B_r)\quad\text{for every }r>0
	\end{equation*}    
	and there exists $\mathcal S_{N,q}>0$ (depending only on $N$ and $q$) such that
	\begin{equation} \label{Sobolev ineq} \norm{v}_{L^{q}(B_r)}^2
          \le \mathcal S_{N,q}\,r^\frac{N(2-q)+2q}{q} \left(\int_{B_r \setminus
              {\tilde\Gamma}}|\nabla v|^2 \,
          dx+\frac{1}{r}\int_{\partial B_r}v^2 \, dS\right),
	\end{equation}
	for all $r>0$ and $v \in H^1(B_r\setminus {\tilde\Gamma})$.
\end{proposition}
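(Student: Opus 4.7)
The plan is to reduce the statement to the classical Sobolev embedding on the Lipschitz domain $B_1$ via the even reflection operators $\mathcal{R}^\pm$, and then rescale. First I would prove the inequality at $r=1$. Since $\mathcal{R}^\pm(v)\in H^1(B_1)$ and $B_1$ is Lipschitz, the standard Sobolev embedding gives, in the admissible range of $q$,
\[
\|\mathcal{R}^\pm(v)\|_{L^q(B_1)}^2 \le C_1\bigl(\|\nabla\mathcal{R}^\pm(v)\|_{L^2(B_1)}^2 + \|\mathcal{R}^\pm(v)\|_{L^2(B_1)}^2\bigr).
\]
A routine Poincaré-type argument (contradiction, using compactness of $H^1(B_1)\hookrightarrow L^2(B_1)$ and continuity of the trace on $H^1(B_1)$) shows that $\|u\|_{L^2(B_1)}^2\le C_2(\|\nabla u\|_{L^2(B_1)}^2+\|u\|_{L^2(\partial B_1)}^2)$, so we may replace the bulk $L^2$ norm on the right by the boundary $L^2$ norm.

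Next I would combine this with \eqref{even re p1}, \eqref{even re nabla} and \eqref{even re part}. Applying the embedding to both $\mathcal{R}^+(v)$ and $\mathcal{R}^-(v)$ and adding, the right-hand sides collapse, thanks to \eqref{even re nabla} and \eqref{even re part}, to $2\bigl(\int_{B_1\setminus\tilde\Gamma}|\nabla v|^2\,dx+\int_{\partial B_1}v^2\,dS\bigr)$. On the left, \eqref{even re p1} yields $\|\mathcal{R}^\pm(v)\|_{L^q(B_1)}^q=2\|v\|_{L^q(B_1^\pm)}^q$, so summing in $q$-powers gives $\|\mathcal{R}^+(v)\|_{L^q(B_1)}^q+\|\mathcal{R}^-(v)\|_{L^q(B_1)}^q=2\|v\|_{L^q(B_1)}^q$. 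To move from $q$-power sums to squared norms I would use the elementary inequality $(a+b)^{2/q}\le C_q(a^{2/q}+b^{2/q})$ (needed only when $q<2$). This produces
\[
\|v\|_{L^q(B_1)}^2\le C_3\Bigl(\int_{B_1\setminus\tilde\Gamma}|\nabla v|^2\,dx+\int_{\partial B_1}v^2\,dS\Bigr),
\]
which is the asserted inequality in the scale-invariant case $r=1$.

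Finally I would scale. Since $\tilde\Gamma$ is a cone, the map $w(y):=v(ry)$ is a bijection of $H^1(B_r\setminus\tilde\Gamma)$ onto $H^1(B_1\setminus\tilde\Gamma)$. Applying the $r=1$ inequality to $w$ and performing the changes of variable
\[
\|w\|_{L^q(B_1)}^2=r^{-2N/q}\|v\|_{L^q(B_r)}^2,\ \|\nabla w\|_{L^2(B_1\setminus\tilde\Gamma)}^2=r^{2-N}\|\nabla v\|_{L^2(B_r\setminus\tilde\Gamma)}^2,\ \|w\|_{L^2(\partial B_1)}^2=r^{1-N}\|v\|_{L^2(\partial B_r)}^2,
\]
and collecting the powers of $r$ gives exactly the exponent $\tfrac{N(2-q)+2q}{q}=\tfrac{2N}{q}+2-N$ in the statement, with $\mathcal S_{N,q}$ depending only on $N$ and $q$.

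No genuine obstacle is expected: the only mildly delicate point is the elementary bookkeeping needed to pass from $q$-th power sums to squared $L^q$-norms when $1\le q<2$, which is handled by the inequality $(a+b)^{2/q}\le C_q(a^{2/q}+b^{2/q})$. The scaling step is automatic because $\tilde\Gamma$ is invariant under dilations, and everything else reduces to the classical Sobolev embedding on the ball.
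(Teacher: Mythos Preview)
Your proposal is correct and follows essentially the same route as the paper: both use the equivalent norm $\bigl(\|\nabla v\|_{L^2(B_1)}^2+\|v\|_{L^2(\partial B_1)}^2\bigr)^{1/2}$ on $H^1(B_1)$ together with the classical Sobolev embedding, pass to the cracked domain via the even reflections $\mathcal{R}^\pm$ and identities \eqref{even re p1}--\eqref{even re part}, and obtain the general $r$ by dilation. The only cosmetic difference is the order of operations (the paper first scales on the uncracked ball and then reflects, while you reflect at $r=1$ and then scale, exploiting that $\tilde\Gamma$ is a cone); the bookkeeping with $q$-power sums versus squared norms that you flag is harmless and is implicitly absorbed into the constant in the paper as well.
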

\begin{proof}
	Since 
	\[
          \left(\int_{B_1}|\nabla v|^2 \,
            dx+\int_{\partial B_1}v^2  \, dS\right)^{\frac{1}{2}}
        \] 
	is an equivalent norm on $H^1(B_1)$, from a scaling argument and
	Sobolev embedding Theorems it follows that, for all $q \in [1,2^*]$ if
	$N\geq3$ and $q\in[1,\infty)$ if $N=2$, there exists $\mathcal S_{N,q}>0$ such
	that, for all $r>0$ and $v \in H^1(B_r)$, 
	\[
	\norm{v}_ {L^{q}(B_r)}^2 \le \mathcal S_{N,q}r^\frac{N(2-q)+2q}{q}
	\left(\int_{B_r}|\nabla v|^2 \, dx+\frac{1}{r}\int_{\partial B_r}v^2
	\, dS\right).
	\]
	Using \eqref{even re p1}, \eqref{even re p2}, \eqref{even re nabla}
	and \eqref{even re part} we complete the proof.
\end{proof}

\begin{proposition}\label{fond ineq}
For any $r>0$, $h\in L^{\frac{N}{2}+ \epsilon }(B_r)$ with
$\epsilon > 0$, and $v \in H^1(B_r\setminus {\tilde\Gamma})$, there holds
\begin{equation}\label{Found ineq}
  \int_{B_r}|h|v^2 \le
  \eta_h(r)\left(\int_{B_r \setminus {\tilde\Gamma}}|\nabla
    v|^2 \, dx+ \frac{1}{r}\int_{\partial B_r}v^2 \,dS\right),
\end{equation}
	where  
	\begin{equation} \label{def eta} \eta_h(r)= \mathcal S_{N,q_\epsilon}
		\norm{h}_{L^{\frac{N}{2}+\epsilon}(B_r)} r^{\frac{4 \epsilon}{N+2
				\epsilon}} \quad \text{and}\quad q_\epsilon:={\frac{2N+4
				\epsilon}{N-2+2 \epsilon}}.
	\end{equation}
\end{proposition}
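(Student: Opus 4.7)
The plan is a straightforward Hölder--Sobolev argument: combine Hölder's inequality with exponent $\frac{N}{2}+\epsilon$ and Proposition \ref{sobolev ineq} applied to the conjugate-induced Sobolev exponent $q_\epsilon$.

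First, I would apply Hölder's inequality on $B_r$ with exponents $p=\frac{N}{2}+\epsilon$ and its conjugate $p'$, obtaining
\[
\int_{B_r}|h|v^2\,dx\le \|h\|_{L^{N/2+\epsilon}(B_r)}\,\|v\|_{L^{2p'}(B_r)}^{2}.
\]
A direct computation of $p'=\frac{p}{p-1}=\frac{N+2\epsilon}{N-2+2\epsilon}$ gives $2p'=\frac{2N+4\epsilon}{N-2+2\epsilon}=q_\epsilon$, which matches the exponent in \eqref{def eta}.

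Second, I would check that $q_\epsilon$ lies in the admissible range of Proposition \ref{sobolev ineq}. For $N\ge 3$ one has $q_\epsilon<2^{*}=\frac{2N}{N-2}$ (comparing numerators after cross-multiplication yields a deficit of $-8\epsilon$), while for $N=2$ the exponent $q_\epsilon=\frac{2+2\epsilon}{\epsilon}$ is finite, so the Sobolev inequality applies in both cases. Proposition \ref{sobolev ineq} then gives
\[
\|v\|_{L^{q_\epsilon}(B_r)}^{2}\le \mathcal S_{N,q_\epsilon}\,r^{\frac{N(2-q_\epsilon)+2q_\epsilon}{q_\epsilon}}\left(\int_{B_r\setminus\tilde\Gamma}|\nabla v|^2\,dx+\frac{1}{r}\int_{\partial B_r}v^2\,dS\right).
\]

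Third, I would verify the exponent on $r$. A short calculation shows
\[
\frac{N(2-q_\epsilon)+2q_\epsilon}{q_\epsilon}=\frac{2N}{q_\epsilon}+2-N=\frac{4\epsilon}{N+2\epsilon},
\]
which is exactly the exponent appearing in the definition of $\eta_h(r)$. Chaining the two inequalities then yields \eqref{Found ineq} with the stated constant.

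There is no real obstacle here: the only points requiring a little care are the algebraic identifications (that the Hölder conjugate produces precisely $q_\epsilon$, that $q_\epsilon$ is below the Sobolev threshold $2^{*}$ for $N\ge 3$, and that the scaling exponent in Proposition \ref{sobolev ineq} simplifies to $\frac{4\epsilon}{N+2\epsilon}$). Proposition \ref{sobolev ineq} already encapsulates the adjustment for the slit domain $B_r\setminus\tilde\Gamma$ via the even reflection identities \eqref{even re p1}--\eqref{even re part}, so no separate argument is needed to handle the crack.
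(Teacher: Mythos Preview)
Your proof is correct and follows exactly the same approach as the paper: H\"older's inequality with exponent $\frac{N}{2}+\epsilon$ followed by the Sobolev inequality \eqref{Sobolev ineq} with $q=q_\epsilon$. The paper's proof is terser and omits the algebraic verifications you spell out (that $2p'=q_\epsilon$, that $q_\epsilon<2^*$, and that the scaling exponent equals $\frac{4\epsilon}{N+2\epsilon}$), but the argument is identical.
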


\begin{proof}
For any $v \in H^1(B_r\setminus {\tilde\Gamma})$
\begin{align*}
  \int_{B_r}|h|v^2 \, dx &\le \norm{h}_{L^{\frac{N}{2}+\epsilon}(B_r)}\left(\int_{B_r}|v|^{q_\epsilon}
                           \, dx \right)^{2/q_\epsilon}\\
                         &\le \mathcal S_{N, q_\epsilon} \norm{h}_{L^{\frac{N}{2}+\epsilon}(B_r)}
                           r^{\frac{4\epsilon}{N+2 \epsilon}}\left(\int_{B_r
                           \setminus
                           {\tilde\Gamma}}|\nabla v|^2\,dx
                           +\frac{1}{r}\int_{\partial B_r}v^2 dS\right)
\end{align*}
	thanks to H\"older inequality and \eqref{Sobolev ineq}.
\end{proof}
\begin{remark} \label{H2 then N/2}
	If $f$ satisfies \eqref{h2}, then $f \in
        L^{\frac{N}{2}+\epsilon }(B_R)$, so that
          Proposition \ref{fond ineq} applies to potentials satisfying either
          \eqref{h1} or
        \eqref{h2}.
\end{remark}
\begin{remark}
  By \eqref{Found ineq}, \eqref{mu-estimates} and \eqref{ellipticity},
  for any $r\in (0,r_1)$, $h \in L^{\frac{N}{2}+ \epsilon }(B_r)$, and
  $v \in H^1(B_r \setminus\tilde\Gamma)$, we have that
 
  \[
    \int_{B_r\setminus\tilde\Gamma}|\nabla v|^2 \, dy \le
    2\int_{B_r\setminus\tilde\Gamma}(A \nabla v \cdot \nabla v-h v^2)
    \, dy +{2\eta_h(r)}\left(\int_{B_r\setminus\tilde\Gamma}|\nabla v|^2
      \, dy+ \frac{2}{r}\int_{\partial B_r}\mu v^2 \,dS\right)
  \]
and therefore, if $\eta_h(r)<{\frac12}$,
\begin{equation} \label{nabla ineq} \int_{B_r\setminus\tilde\Gamma}
  |\nabla v|^2 \, dy \le \frac{2}{1-
    {2\eta_h(r)}}\int_{B_r\setminus\tilde\Gamma}(A \nabla v
  \cdot \nabla v-hv^2) \, dy +
  \frac{{4\eta_h(r)}}{(1-{2\eta_h(r)})r}
  \int_{\partial B_r}\mu v^2 \,dS.
	\end{equation}
\end{remark}
\section{Approximating problems}\label{sec:appr-probl}
In this section we construct a sequence of problems in smooth sets
approximating the straightened cracked domain.
We define, for any $n \in \mathbb{N} \setminus \{0\}$,  

\[
  g_n:\R\rightarrow \R, \quad g_n(t):=nt^4
\]
and, for any $r \in (0,r_1]$,
\begin{equation*}
\Omega_{n,r}:=\{(y',y_{N-1},y_N)\in B_r:y_{N-1}< g_n(y_N)\}
\end{equation*}
and
\begin{equation*}
	 \Gamma_{n,r}:=\{(y',y_{N-1},y_N) \in B_r:y_{N-1}=g_n(y_N)\}=
	\partial\Omega_{n,r}\cap B_r. 
\end{equation*}
It is clear that, for every $y \in B_r \setminus \tilde\Gamma$, there exists
a $\bar n \in \mathbb{N} \setminus \{0\}$  such that $y \in
\Omega_{n,r}$ for all $n\geq\bar n$.   Moreover $\Omega_{n,r} \cap \tilde \Gamma = \emptyset$ 
for any $r \in (0,r_1]$ and $ n \in \mathbb N \setminus\{0\}$.
We also note that  $\Omega_{n,r}$ is a Lipschitz domain and
$\Gamma_{n,r}$ is a $C^2$-smooth portion of its boundary.
\begin{proposition} \label{negativity}
  Let $\nu(y)$ be the outward
  normal vector to $\partial \Omega_{n,r_1}$ in
  $y$. Then
  \begin{align}
    \label{eq:24}{y \cdot \nu(y) \le 0}\quad &\text{for all $y\in
    \Gamma_{n,r_1}$},\\
    \label{eq:25}A(y)y \cdot \nu(y) \le 0\quad &\text{for all $y\in
    \Gamma_{n,r_1}$}.
  \end{align}
\end{proposition}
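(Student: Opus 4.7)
The plan is to write down the outward unit normal on $\Gamma_{n,r_1}$ explicitly, and then verify both inequalities by direct computation, using the block structure of $A$ from \eqref{matrix-A}--\eqref{matrix-D} for the second one.

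Since $\Omega_{n,r_1}$ is the sublevel set in $B_{r_1}$ of the function $F(y):=y_{N-1}-n y_N^{4}$, the outward normal to $\Gamma_{n,r_1}=\{F=0\}\cap B_{r_1}$ is parallel to $\nabla F=(0,\dots,0,1,-4n y_N^{3})$, so
\[
\nu(y)=\frac{1}{\sqrt{1+16 n^{2} y_N^{6}}}\,(0,\dots,0,1,-4n y_N^{3})\quad\text{for every } y\in \Gamma_{n,r_1}.
\]
For \eqref{eq:24} I would then just compute, using $y_{N-1}=n y_N^{4}$ on $\Gamma_{n,r_1}$,
\[
y\cdot\nu(y)=\frac{y_{N-1}-4n y_N^{4}}{\sqrt{1+16n^{2}y_N^{6}}}=\frac{-3n y_N^{4}}{\sqrt{1+16n^{2}y_N^{6}}}\le 0,
\]
which settles the first claim.

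For \eqref{eq:25}, the key is that, thanks to \eqref{matrix-A}, the matrix $A$ has the block form under which the only entries of $Ay$ that interact with $\nu$ are $(Ay)_{N-1}$ and $(Ay)_N$. From \eqref{matrix-D} the last row of $D(y',y_{N-1})$ equals $\bigl(O(y_{N-1}),\,1+O(|y'|^{2})+O(y_{N-1})\bigr)$, and from \eqref{matrix-A} one has $(Ay)_N=\det J_F(y',y_{N-1})\,y_N$. Restricting to $\Gamma_{n,r_1}$ and substituting $y_{N-1}=n y_N^{4}$, these give
\[
(Ay)_{N-1}=n y_N^{4}\bigl(1+O(|y'|)+O(|y'|^{2})+O(n y_N^{4})\bigr),\qquad (Ay)_N=y_N\bigl(1+O(|y'|^{2})+O(n y_N^{4})\bigr),
\]
so, multiplying by the components of $\nu(y)$, the unnormalized quantity $\sqrt{1+16n^{2}y_N^{6}}\,A(y)y\cdot\nu(y)$ equals
\[
(Ay)_{N-1}-4n y_N^{3}(Ay)_N=-3n y_N^{4}+n y_N^{4}\bigl(O(|y'|)+O(|y'|^{2})+O(n y_N^{4})\bigr).
\]
On $\Gamma_{n,r_1}\subset B_{r_1}$ one has $|y'|\le r_1$ and $n y_N^{4}=y_{N-1}\le r_1$, so the error terms are uniformly bounded by $C r_1 \cdot n y_N^{4}$ with $C$ depending only on the matrix $A$. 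Hence, possibly shrinking $r_1$ once more (which is consistent with the freedom already allowed in Proposition \ref{beta-properties}), the error is at most $\tfrac32 n y_N^{4}$ and so
\[
A(y)y\cdot\nu(y)\le \frac{-3n y_N^{4}+\tfrac32 n y_N^{4}}{\sqrt{1+16n^{2}y_N^{6}}}\le 0,
\]
which gives \eqref{eq:25}.

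The only non-routine point is the bookkeeping in the last step: one has to check that every error contribution to $A(y)y\cdot\nu(y)$ comes with a prefactor $n y_N^{4}$ rather than with something possibly larger, so that absorption into the leading $-3n y_N^{4}$ is indeed possible. This is exactly why the block structure \eqref{matrix-A}--\eqref{matrix-D} is needed: the vanishing of the off-diagonal blocks between the first $N-1$ and the last coordinate prevents $y_N$ from appearing undampened in $(Ay)_{N-1}$ and $y'$, $y_{N-1}$ from appearing in $(Ay)_N$, and only then do the two dominant contributions organize themselves into the favorable combination $n y_N^{4}-4n y_N^{4}=-3n y_N^{4}$.
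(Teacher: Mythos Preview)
Your proof is correct and follows essentially the same approach as the paper: both write the normal explicitly, compute $y\cdot\nu$ directly, and then use the block structure \eqref{matrix-A}--\eqref{matrix-D} together with the expansion of $\det J_F$ to show that $A(y)y\cdot\nu(y)$ has negative leading part $-3n y_N^4$ with error terms that can be absorbed after shrinking $r_1$. The only cosmetic difference is that the paper keeps $g_n$ abstract and writes the final step as $\tfrac32\big(g_n(y_N)-\tfrac13 y_N g_n'(y_N)\big)\le 0$, whereas you substitute $g_n(t)=nt^4$ from the start.
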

\begin{proof}
	As a first step we notice that
	\begin{equation}\label{eq:4}
          g_n(t)-\frac{1}{3}tg'_n(t)=nt^4-\frac43nt^4=-\frac{1}{3}nt^4\le
          0, \quad  g_n(t)-tg'_n(t)\le 0
        \end{equation}
	and that 
\[
  \nu(y)=\frac{(0,1,-g_n'(y_N))}{\sqrt{1+(g_n'(y_N))^2}}
  \quad{\text{for all }y\in \Gamma_{n,r_1}}.
\]
Then,  for all $y\in  \Gamma_{n,r_1}$,
\[
  \nu(y) \cdot y=\frac{(0,1,-g_n'(y_N))}{\sqrt{1+(g_n'(y_N))^2}}\cdot
  (y',g_n(y_N),y_N)=\frac{g_n(y_N)- y_Ng_n'(y_N)
  }{\sqrt{1+(g_n'(y_N))^2}} \le 0
\]
due to \eqref{eq:4}. We have then proved  \eqref{eq:24} (and
\eqref{eq:25} in the case $N=2$ in view of \eqref{eq:N2mat}).

If $N\ge 3$, possibly choosing $r_1$ smaller in
  Proposition \ref{diffeomorphism},  for all $y\in
  \Gamma_{n,r_1}$ we have that
\begin{align*}
\sqrt{1+(g_n'(y_N))^2} A(y)y\cdot \nu(y)
    &=g_n(y_N)(1+{O(|y'|)}+O(y_{N-1}))-\det{ J_F(y)}\,y_Ng_n'(y_N)\\
   &\le\frac{3}{2}g_n(y_N)-\frac{1}{2}y_Ng_n'(y_N)=
     \frac{3}{2}(g_n(y_N)-\frac{1}{3}y_Ng_n'(y_N)),
\end{align*}
thanks to ii) in Proposition \ref{diffeomorphism}, \eqref{matrix-A}
and \eqref{matrix-D}.
Then, by \eqref{eq:4} we finally obtain
  \eqref{eq:25} also for $N\geq3$.
\end{proof}

\noindent Let 
\begin{equation*}
  \R^N_+:=\{y=(y',y_{N-1},y_{N}) \in \R^N: y_N >0\}
  \text{ and }  \R^N_-:=\{y=(y',y_{N-1},y_{N}) \in \R^N: y_N < 0\}.
\end{equation*}
For any $r \in (0,r_1]$ and $n \in \mathbb N \setminus\{0\}$ let 
\begin{equation}\label{Omega Gamma}
	\Omega_{n,r}^+:= \Omega_{n,r} \cap B^+_r, \quad
	\Omega_{n,r}^- :=\Omega_{n,r} \cap B^-_r, \quad S_{n,r}:=\partial \Omega_{n,r} \cap \partial B_r.	
\end{equation}
For all $n\in \mathbb{N}\setminus\{0\}$ we also define
\begin{align*}
	&  K_{n,r_1}^+:=
	\{y=(y',y_{N-1},y_N)\in \R^N_+:\text{either }y_{N-1}<
	g_n(y_N)\text{ or }|y|>r_1\},\\
	& K_{n,r_1}^-:=
	\{y=(y',y_{N-1},y_N)\in \R^N_-:\text{either }y_{N-1}<
g_n(y_N)\text{ or }|y|>r_1\}.
\end{align*}
Since $\Omega_{n,r}$ is a Lipschitz domain, for any $r \in (0,r_1]$  and $n \in \mathbb N \setminus\{0\}$ there exists a trace operator
\begin{equation*}
	\gamma_{n,r}:H^1(\Omega_{n,r}) \rightarrow L^2(\partial \Omega_{n,r}).
\end{equation*}
We define
\begin{equation}\label{H10}
	H^1_{0,S_{n,r}}(\Omega_{n,r}):=\{u \in H^1(\Omega_{n,r}):\gamma_{n,r}(u)=0 \text{ on } S_{n,r} \}.
\end{equation}
The following proposition provides an extension operator from
$H^1_{0,S_{n,r}}(\Omega_{n,r})$ to $H^1(B_{r_1}\setminus\tilde\Gamma)$ with an
operator norm bounded uniformly with respect to $n$.

\begin{proposition} \label{pon ex}
  For any $r \in (0,r_1)$  and $n \in \mathbb N \setminus\{0\}$ there exists an extension operator 
	\begin{equation}
		\xi^0_{n,r}:H^1_{0,S_{n,r}}(\Omega_{n,r})\rightarrow H^1(B_{r_1}\setminus\tilde\Gamma)	
	\end{equation} such that, for any $\phi \in
	H^1_{0,S_{n,r}}(\Omega_{n,r})$, 
	\begin{equation}\label{pon ex properties}
		\xi^0_{n,r}(\phi)\big|_{\Omega_{n,r}}=\phi,\quad   
		\xi^0_{n,r}(\phi)=0   \text{ on }
                \Omega_{n,r_1}\setminus\Omega_{n,r},
                \quad \xi^0_{n,r}(\phi)\in H^1_{0,\partial B_{r_1}}(B_{r_1} \setminus \tilde\Gamma), 
	\end{equation}
	and
	\begin{equation}\label{pon ex ineq}
		\norm{\xi^0_{n,r}(\phi)}_{H^1(B_{r_1}\setminus\tilde\Gamma)} \le c_0
		\norm{\phi}_{H^1(\Omega_{n,r})}
		=c_0\bigg(\int_{\Omega_{n,r}}\big(\phi^2+|\nabla \phi|^2\big)\,dy\bigg)^{1/2}
		,	
	\end{equation}
	where $c_0 > 0$  is independent of $n$, $r$, and $\phi$. 
\end{proposition}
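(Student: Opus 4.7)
The plan is to build $\xi^0_{n,r}$ in two stages and then establish a uniform energy bound. The first stage uses the vanishing trace on $S_{n,r}$ to extend by zero, and the second stage reflects across $\Gamma_{n,r_1}$; the technical core of the proposition is the uniform control of the reflection step, which requires careful exploitation of the quartic growth of $g_n$.

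First, since $\phi\in H^1_{0,S_{n,r}}(\Omega_{n,r})$ has vanishing trace on $S_{n,r}\subset\partial B_r$, I would extend $\phi$ by zero to $\Omega_{n,r_1}\setminus\Omega_{n,r}$, obtaining $\tilde\phi\in H^1(\Omega_{n,r_1})$ with $\tilde\phi$ supported in $\overline{\Omega_{n,r}}$, $\|\tilde\phi\|_{H^1(\Omega_{n,r_1})}=\|\phi\|_{H^1(\Omega_{n,r})}$, and $\tilde\phi$ vanishing in a neighbourhood of $\partial B_{r_1}$. Second, I would extend $\tilde\phi$ across the smooth surface $\Gamma_{n,r_1}$ into the two open components $U_{\pm}:=\{y\in B_{r_1}:\pm y_N>0,\ y_{N-1}>g_n(y_N)\}$ of $B_{r_1}\setminus(\overline{\Omega_{n,r_1}}\cup\tilde\Gamma)$ by the reflection
\[
\xi^0_{n,r}(\phi)(y',y_{N-1},y_N):=\tilde\phi\bigl(y',2g_n(y_N)-y_{N-1},y_N\bigr)\quad\text{on }U_{\pm}.
\]
Since $2g_n(y_N)-y_{N-1}=y_{N-1}$ on $\Gamma_{n,r_1}$, the inner traces from the $\Omega$-side and the $U_{\pm}$-side agree there, so the combined function belongs to $H^1(B_{r_1}\setminus\tilde\Gamma)$; no compatibility is needed across $\tilde\Gamma$, which is excised from the target space, and this is precisely what makes the two-sided construction natural under Neumann boundary conditions. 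The support of $\tilde\phi$ in $\overline{B_r}$ together with the unit-volume character of the reflection ensures that $\xi^0_{n,r}(\phi)$ vanishes near $\partial B_{r_1}$, yielding all properties in \eqref{pon ex properties}.

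The crux of the proof is the uniform estimate \eqref{pon ex ineq} with $c_0$ independent of $n$, $r$ and $\phi$. Via the unit-Jacobian change of variables $z=(y',2g_n(y_N)-y_{N-1},y_N)$ the $L^2$ norm of $\xi^0_{n,r}(\phi)$ on $U_{\pm}$ and the $\partial_{y'}$ and $\partial_{y_{N-1}}$ derivative contributions are immediately bounded by $\|\tilde\phi\|_{H^1(\Omega_{n,r_1})}^2$. The difficult term is the mixed contribution to $\partial_{y_N}\xi^0_{n,r}(\phi)$, namely $2g_n'(y_N)(\partial_{y_{N-1}}\tilde\phi)\circ R=8ny_N^3(\partial_{y_{N-1}}\tilde\phi)\circ R$, whose pointwise coefficient grows with $n$. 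The main obstacle is therefore to prove
\[
\int_{U_{\pm}} n^2 y_N^6\,\bigl|(\partial_{y_{N-1}}\tilde\phi)\circ R\bigr|^2\, dy\le C\,\|\nabla\tilde\phi\|_{L^2(\Omega_{n,r_1})}^2
\]
with $C$ independent of $n$. The first step to control this is to exploit the geometric constraints defining $U_{\pm}$: from $y_{N-1}>ny_N^4$ and $|y|<r_1$ one gets $|y_N|\le(r_1/n)^{1/4}$, so $n^2 y_N^6\le n^{1/2}r_1^{3/2}$ uniformly on $U_{\pm}$. After pushing forward by $R$, the integral is supported in the thin slab $\Omega_{n,r_1}\cap\{|z_N|\le(r_1/n)^{1/4}\}$; the remaining $n^{1/2}$ factor must be absorbed by a finer analysis — a dyadic decomposition of $U_\pm$ in $y_N$ combined with trace-type inequalities on the level sets $\{y_N=\mathrm{const}\}$ and the specific algebraic relation $y_{N-1}\sim ny_N^4$ on $\Gamma_{n,r_1}$ — and the quartic profile $g_n(t)=nt^4$ is tailored precisely to make this absorption yield a bound independent of $n$. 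Once this uniform inequality is established the proposition follows.
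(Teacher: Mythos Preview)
Your explicit reflection across $\Gamma_{n,r_1}$ does not give a uniformly bounded operator: the inequality you single out as the ``main obstacle'',
\[
\int_{U_{\pm}} n^2 y_N^6\,\bigl|(\partial_{y_{N-1}}\tilde\phi)\circ R\bigr|^2\, dy\le C\,\|\nabla\tilde\phi\|_{L^2(\Omega_{n,r_1})}^2\quad\text{with }C\text{ independent of }n,
\]
is in fact \emph{false}. Take $\phi(z)=\chi(|z|)\,z_{N-1}$ for a fixed cutoff $\chi\in C^\infty_c([0,r))$ with $\chi\equiv 1$ on $[0,r/2]$; then $\phi\in H^1_{0,S_{n,r}}(\Omega_{n,r})$ with $H^1$-norm bounded independently of $n$, and $\partial_{z_{N-1}}\tilde\phi\equiv 1$ on $B_{r/2}$. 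After the unit-Jacobian change of variables $z=R(y)$ (so $z_N=y_N$), the left-hand side is bounded below by
\[
\int_{R(U_+)\cap B_{r/2}} n^2 z_N^6\,dz
\;\ge\; c(r)\int_0^{c'(r)\,n^{-1/4}} n^2 t^6\,dt\;=\;c''(r)\, n^{1/4}\to\infty,
\]
because for each small $t>0$ the slice $R(U_+)\cap B_{r/2}\cap\{z_N=t\}$ has $(N{-}1)$-dimensional measure bounded below by a positive constant independent of $n$ (the constraint $z_{N-1}<g_n(t)=nt^4$ is essentially vacuous for small $t$, and the remaining constraints do not involve $n$). No dyadic or trace refinement can rescue this: the inequality already fails for this fixed $\phi$. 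The root cause is structural: your reflection keeps $y_N$ fixed, so its differential necessarily contains $g_n'(y_N)$, which reaches order $n^{1/4}$ on $U_\pm$. Incidentally, the quartic profile $g_n(t)=nt^4$ is not tailored to this extension step at all---it is chosen so that $A(y)y\cdot\nu\le 0$ on $\Gamma_{n,r_1}$ (Proposition~\ref{negativity}), a sign condition needed later in the Pohozaev argument.

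The paper avoids any explicit reflection formula. After the same zero-extension, it restricts $\tilde\phi$ to $\Omega_{n,r}^+$ and $\Omega_{n,r}^-$, trivially extends each to the exterior-type domains
\[
K_{n,r_1}^\pm=\{y\in\R^N_\pm:\ y_{N-1}<g_n(y_N)\text{ or }|y|>r_1\},
\]
and then applies abstract Stein--Calder\'on extension operators $\xi_n^\pm:H^1(K_{n,r_1}^\pm)\to H^1(\R^N_\pm)$, arguing that the Lipschitz character of $\partial K_{n,r_1}^\pm$ is uniform in $n$ so that the operator norms are uniform. The two half-space extensions are then glued along $\{y_N=0\}$, which is permitted precisely because the target space is $H^1(B_{r_1}\setminus\tilde\Gamma)$ and no matching across $\tilde\Gamma$ is required.
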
	

\begin{proof}
	It is well known that, since $K_{n,r_1}^+$ and $K_{n,r_1}^-$ are
	uniformly Lipschitz domains, there exist continuous extension
	operators $\xi_{n}^+:H^1(K_{n,r_1}^+)\rightarrow H^1(\R^N_+)$ and
	$\xi_{n}^-:H^1(K_{n,r_1}^-)\rightarrow H^1(\R^N_-)$, see
	\cite{stein2016singular}, \cite{calderon1961lebesgue} and
	\cite{leoni2017first}.  Furthermore, since the Lipschitz constants of
	the parameterization of 
	$\partial K_{n,r_1}^+$ and $\partial K_{n,r_1}^-$ are bounded uniformly
	with respect to
	$n$, there exists a constant $C>0$, which does not depend on $n$,
	such that
	\begin{equation} \label{pon ex ineqs}
          \norm{\xi_{n}^+(v)}_{H^1(\R^N_+)}\le
         C\norm{v}_{H^1(K_{n,r_1}^+)} \quad \text{and}
          \quad\norm{\xi_{n}^-(w)}_{H^1(\R^N_-)}\le
          C\norm{w}_{H^1(K_{n,r_1}^-)}
	\end{equation}
	for all $v\in H^1(K_{n,r_1}^+)$ and $w\in H^1(K_{n, r_1}^-)$.
	
	If $\phi \in H^1_{0,S_{n,r}}(\Omega_{n,r})$ then the trivial extension
	$\bar \phi_+$ of $\phi\big|_{\Omega_{n,r}^+}$ to $K_{n,r_1}^+$ belongs to
	$H^1(K_{n,r_1}^+)$ and the trivial extension $\bar \phi_-$ of
	$\phi\big|_{\Omega_{n,r}^-}$ to $K_{n,r_1}^-$ belongs to $H^1(K_{n,r_1}^-)$.
	Then we define 
	\begin{equation*}
		\xi^0_{n,r}(\phi)(y):=
		\begin{cases}
			\xi_{n}^+(\bar \phi_+)(y), &\text{ if } y\in B_{r_1}^+, \\
			\xi_{n}^-(\bar \phi_-)(y), &\text{ if }  y\in B_{r_1}^-,
		\end{cases}
	\end{equation*}
which belongs to
          $H^1(B_{r_1}\setminus\tilde\Gamma)$ and satisfies \eqref{pon ex ineq} in view of \eqref{pon ex ineqs}. Furthermore \eqref{pon ex properties} follows directly from
	the definition of $\xi^0_{n,r}$.
\end{proof}

  The  following proposition establishes a Poincaré type  inequality for
$H^1_{0,S_{n,r}}(\Omega_{n,r})$-functions, with a constant independent
of $n$.
\begin{proposition} \label{Pon Prop} For any $r\in (0,r_1]$,
  $n\in{\mathbb N}\setminus\{0\}$, and
  $\phi \in H^1_{0,S_{n,r}}(\Omega_{n,r})$
	\begin{equation}\label{Pon ineq}
          \int_{\Omega_{n,r}}\phi^2dy \le \frac{r^2}{N-1}\int_{\Omega_{n,r}}|\nabla\phi|^2\,dy 
	\end{equation}
	and
	\begin{equation}\label{equivalent norm}
          \norm{\phi}_{H^1_{0,S_{n,r}}(\Omega_{n,r})}:=\left(\int_{\Omega_{n,r}}|\nabla\phi|^2\,dy \right)^\frac{1}{2}
	\end{equation} is an equivalent norm on $H^1_{0,S_{n,r}}(\Omega_{n,r})$.
      \end{proposition}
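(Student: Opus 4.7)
The plan is to derive \eqref{Pon ineq} from the divergence theorem applied to the vector field $F(y)=y\phi^2(y)$ on $\Omega_{n,r}$, exploiting the vanishing of $\phi$ on $S_{n,r}$ and the sign condition \eqref{eq:24} from Proposition \ref{negativity} on $\Gamma_{n,r}$. This is a classical Poincaré-type argument tailored to the geometry of $\Omega_{n,r}$, and the uniformity in $n$ will come automatically, since the constant depends only on $r$, $N$, and the sign of $y\cdot\nu$.

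First, I would observe that since $\phi\in H^1(\Omega_{n,r})$, we have $\phi^2\in W^{1,1}(\Omega_{n,r})$ with $\nabla(\phi^2)=2\phi\nabla\phi$, so $F=y\phi^2$ lies in $W^{1,1}(\Omega_{n,r},\R^N)$. Since $\Omega_{n,r}$ is Lipschitz, the divergence theorem applies and gives
\[
\int_{\Omega_{n,r}}\bigl(N\phi^2+2\phi\,y\cdot\nabla\phi\bigr)\,dy
=\int_{\partial\Omega_{n,r}}\phi^2\,y\cdot\nu\,d\sigma.
\]
The boundary $\partial\Omega_{n,r}$ decomposes, up to a set of measure zero, into $S_{n,r}$ and $\Gamma_{n,r}$. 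On $S_{n,r}$ the trace $\gamma_{n,r}(\phi)$ vanishes by definition of $H^1_{0,S_{n,r}}(\Omega_{n,r})$, so the contribution vanishes. On $\Gamma_{n,r}$ we have $y\cdot\nu(y)\le 0$ by \eqref{eq:24}, so the boundary term is non-positive. (If one prefers to avoid invoking the $W^{1,1}$ divergence theorem directly, one approximates $\phi$ by smooth functions vanishing near $S_{n,r}$ and passes to the limit.)

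Next, I would chain the resulting inequality with Cauchy--Schwarz: since $|y|\le r$ on $\Omega_{n,r}$,
\[
N\int_{\Omega_{n,r}}\phi^2\,dy\le -2\int_{\Omega_{n,r}}\phi\,y\cdot\nabla\phi\,dy
\le 2r\Bigl(\int_{\Omega_{n,r}}\phi^2\,dy\Bigr)^{1/2}\Bigl(\int_{\Omega_{n,r}}|\nabla\phi|^2\,dy\Bigr)^{1/2},
\]
which, after squaring and dividing, yields $\int_{\Omega_{n,r}}\phi^2\,dy\le \frac{4r^2}{N^2}\int_{\Omega_{n,r}}|\nabla\phi|^2\,dy$. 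Since $(N-2)^2\ge 0$ is equivalent to $\frac{4}{N^2}\le \frac{1}{N-1}$ for $N\ge 2$, this implies \eqref{Pon ineq}.

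Finally, the equivalence of norms in \eqref{equivalent norm} is immediate: the bound $\|\phi\|_{H^1(\Omega_{n,r})}^2\le \bigl(1+\tfrac{r^2}{N-1}\bigr)\int_{\Omega_{n,r}}|\nabla\phi|^2\,dy$ follows from \eqref{Pon ineq}, while the reverse inequality $\int_{\Omega_{n,r}}|\nabla\phi|^2\,dy\le \|\phi\|_{H^1(\Omega_{n,r})}^2$ is trivial. The only delicate point I expect is the justification of the divergence theorem for $W^{1,1}$ vector fields on the Lipschitz domain $\Omega_{n,r}$ when $\phi$ is merely $H^1$; this is standard but it is where care is needed, and the cleanest way to circumvent it is the density-based approximation argument alluded to above.
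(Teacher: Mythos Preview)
Your proof is correct and follows essentially the same route as the paper: apply the divergence theorem to $y\phi^2$, drop the boundary term on $\Gamma_{n,r}$ using $y\cdot\nu\le 0$ from \eqref{eq:24}, and finish with an elementary inequality. The paper uses Young's inequality $2ab\le a^2+b^2$ to get the constant $\tfrac{r^2}{N-1}$ directly, whereas your Cauchy--Schwarz step actually yields the sharper constant $\tfrac{4r^2}{N^2}$ before you weaken it to match the statement; the paper also handles the regularity issue by working first with smooth $\phi$ vanishing near $S_{n,r}$ and then invoking a density result, exactly the approximation you outline.
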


\begin{proof}
  For any $\phi \in C^\infty(\overline{\Omega}_{n,r})$ such that
  $\phi=0$ in a neighbourhood of $S_{n,r}$ we have that
	\[
	\mathop{\rm div}(\phi^2 y)= 2\phi \nabla\phi \cdot y+N\phi^2\]
	so that 
	\[
	N\int_{\Omega_{n,r}}\phi^2\,dy=-2\int_{\Omega_{n,r}}\phi \nabla\phi\cdot y 
	\,dy+\int_{ \Gamma_{n,r}}\phi^2 y\cdot \nu \,dS\le
	\int_{\Omega_{n,r}}\phi^2\, dy+r^2\int_{\Omega_{n,r}}|\nabla
	\phi|^2\,dy,
	\]
	since $y\cdot \nu \le 0 $ on $ \Gamma_{n,r}$ by\eqref{eq:24}. Then we may  conclude that 
	\[\int_{\Omega_{n,r}}\phi^2\,dy \le \frac{r^2}{N-1}\int_{\Omega_{n,r}}|\nabla\phi|^2\,dy,\]
	for all $\phi \in C^\infty(\overline{\Omega}_{n,r})$ such that $\phi=0$ in a neighbourhood of 
	$S_{n,r}$. Since $\Omega_{n,r}$ is a Lipschitz domain,  \eqref{Pon ineq} holds for any $\phi \in  H^1_{0,S_{n,r}}(\Omega_{n,r})$ by \cite[Theorem 3.1]{bernard2011density}. The second claim is now obvious.
\end{proof}
From now on we consider on $H^1_{0,S_{n,r}}(\Omega_{n,r})$  the norm
$\norm{\,\cdot\, }_{H^1_{0,S_{n,r}}}$ defined in \eqref{equivalent norm}.

\begin{proposition}\label{Omega found}
	Let $r \in (0,r_1)$, $n\in{\mathbb N}\setminus\{0\}$,   $h \in
	L^{\frac{N}{2}+\epsilon}(B_r)$ with $\epsilon > 0$, and $q_\epsilon$
	be as in \eqref{def eta}. Then, for any $\phi \in H^1_{0,S_{n,r}}(\Omega_{n,r})$,
	\begin{equation} \label{Omega found ineq}
          \int_{\Omega_{n,r}}|h|\phi^2\, dy\le
          c_0^{2}\frac{N-1+r_1^2}{N-1} \mathcal{S}_{N,q_\epsilon}r_1^{\frac{4 \epsilon}{N+2 \epsilon}}
          \norm{h}_{L^{\frac{N}{2}+\epsilon}(B_r)}\int_{\Omega_{n,r}}|\nabla\phi|^2\,
          dy.
	\end{equation}
\end{proposition}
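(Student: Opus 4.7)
The plan is to reduce the estimate on $\Omega_{n,r}$ to the already-established global estimate \eqref{Found ineq} on the ball $B_{r_1}$ by extending $\phi$. Crucially, I will use the extension operator $\xi^0_{n,r}$ from Proposition \ref{pon ex}, whose uniform bound in $n$ is exactly what produces a constant independent of $n$ in the conclusion.

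First, given $\phi\in H^1_{0,S_{n,r}}(\Omega_{n,r})$, let $\tilde\phi:=\xi^0_{n,r}(\phi)\in H^1_{0,\partial B_{r_1}}(B_{r_1}\setminus\tilde\Gamma)$, so that $\tilde\phi=\phi$ on $\Omega_{n,r}$. Extend $h$ by $0$ to all of $B_{r_1}$; this extension, still denoted by $h$, satisfies $\|h\|_{L^{N/2+\epsilon}(B_{r_1})}=\|h\|_{L^{N/2+\epsilon}(B_r)}$. Since $\Omega_{n,r}\subset B_{r_1}$ and $|h|\tilde\phi^2\geq0$, we have
\begin{equation*}
\int_{\Omega_{n,r}}|h|\phi^2\,dy=\int_{\Omega_{n,r}}|h|\tilde\phi^2\,dy\le\int_{B_{r_1}}|h|\tilde\phi^2\,dy.
\end{equation*}

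Next, I would apply Proposition \ref{fond ineq} to $\tilde\phi$ on the ball $B_{r_1}$. The decisive point is that, by \eqref{pon ex properties}, $\tilde\phi$ vanishes in a neighbourhood of $\partial B_{r_1}$, so the boundary integral in \eqref{Found ineq} drops out and we obtain
\begin{equation*}
\int_{B_{r_1}}|h|\tilde\phi^2\,dy\le \mathcal S_{N,q_\epsilon}\|h\|_{L^{N/2+\epsilon}(B_{r_1})}r_1^{\frac{4\epsilon}{N+2\epsilon}}\int_{B_{r_1}\setminus\tilde\Gamma}|\nabla\tilde\phi|^2\,dy.
\end{equation*}
Then \eqref{pon ex ineq} gives $\int_{B_{r_1}\setminus\tilde\Gamma}|\nabla\tilde\phi|^2\,dy\le c_0^2\,\|\phi\|_{H^1(\Omega_{n,r})}^2$.

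Finally, to turn the full $H^1$-norm of $\phi$ on $\Omega_{n,r}$ into the Dirichlet integral alone, I apply the Poincaré inequality \eqref{Pon ineq}: since $r\le r_1$,
\begin{equation*}
\int_{\Omega_{n,r}}\phi^2\,dy+\int_{\Omega_{n,r}}|\nabla\phi|^2\,dy\le\Bigl(1+\tfrac{r_1^2}{N-1}\Bigr)\int_{\Omega_{n,r}}|\nabla\phi|^2\,dy=\tfrac{N-1+r_1^2}{N-1}\int_{\Omega_{n,r}}|\nabla\phi|^2\,dy.
\end{equation*}
Combining the three displayed inequalities yields exactly \eqref{Omega found ineq}. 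There is no real obstacle once the uniform extension $\xi^0_{n,r}$ is in hand: it is precisely the $n$-independence of $c_0$ in Proposition \ref{pon ex}, together with the $n$-independent Poincaré constant from Proposition \ref{Pon Prop} (itself a consequence of \eqref{eq:24}), that makes the whole argument give a bound independent of $n$.
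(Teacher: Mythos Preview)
Your proof is correct and follows essentially the same route as the paper: extend $\phi$ via $\xi^0_{n,r}$, apply H\"older plus the Sobolev inequality \eqref{Sobolev ineq} on $B_{r_1}$ (the paper does these two steps explicitly rather than quoting Proposition~\ref{fond ineq}), then use \eqref{pon ex ineq} and the Poincar\'e inequality \eqref{Pon ineq}. One small wording point: \eqref{pon ex properties} only gives $\tilde\phi\in H^1_{0,\partial B_{r_1}}(B_{r_1}\setminus\tilde\Gamma)$, so $\tilde\phi$ need not literally vanish in a neighbourhood of $\partial B_{r_1}$, but its trace there is zero, which is all you need to drop the boundary term.
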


\begin{proof}
	We have, for every  $\phi \in H^1_{0,S_{n,r}}(\Omega_{n,r})$, 
	\begin{align*}
          \int_{\Omega_{n,r}}|h|\phi^2\, dy
          &\le  \int_{B_r}|h||\xi^0_{n,r}(\phi)|^2\, dy\le 
            \norm{h}_{L^{\frac{N}{2}+\epsilon}(B_r)}\left(\int_{B_{r_1}}
            |\xi^0_{n,r}(\phi)|^{q_\epsilon}\, dy \right)^{\!\!\frac{2}{q_\epsilon}}\\
          &\le \mathcal S_{N,q_\epsilon}
            r_1^{\frac{4
            \epsilon}{N+2
            \epsilon}}\norm{h}_{L^{\frac{N}{2}+\epsilon}(B_r)}\int_{B_{r_1}\setminus
            \tilde\Gamma}
            |\nabla\xi^0_{n,r}(\phi)|^2\, dy  \\
          &\le c_0^{2}\frac{N-1+{r^2}}{N-1}
            \mathcal S_{N, q_\epsilon}r_1^{\frac{4 \epsilon}{N+2 \epsilon}}
            \norm{h}_{L^{\frac{N}{2}+\epsilon}(B_r)}\int_{\Omega_{n,r}}|\nabla\phi|^2\, dy,
	\end{align*}
	thanks to H\"older's inequality, \eqref{Sobolev ineq}, Proposition
	\ref{pon ex}, and Proposition \ref{Pon Prop}.
\end{proof}

Hereafter we fix a potential $f$ satisfying either
  \eqref{h1} or \eqref{h2} and define $\tilde f:=|\det J_F| \,(f \circ
  F)$ as in Remark \ref{tilde-f-regularity}.
Thanks to Remark \ref{tilde-f-regularity} 
we have that $\tilde f$ satisfies either
\eqref{h1} or \eqref{h2} as well.
 If $f$ (and consequently $\tilde f$) satisfies \eqref{h2},
  we define
  \begin{equation}\label{fn2}
    f_n(y)=
    \begin{cases}
      n,&\text{if }\tilde f(y)>n,\\
      \tilde f(y),&\text{if }|\tilde f(y)|\leq n,\\
      -n,&\text{if }\tilde f(y)<-n, 
    \end{cases}
  \end{equation}
  so that
  \begin{equation}\label{eq:5}
    f_n\in L^\infty(B_{r_1}) \quad\text{and}\quad
    |f_n|\leq |\tilde f|\text{ a.e. in $B_{r_1}$}\quad\text{for all }n\in {\mathbb N}\setminus\{0\} 
  \end{equation}
  and
    \begin{equation}\label{eq:26}
      f_n\to\tilde f\text{ a.e. in $B_{r_1}$}.
    \end{equation}
  If $f$ satisfies \eqref{h1},  we just let 
\begin{equation}\label{fn1}
f_n:=\tilde{f} \quad \text{for any } n \in \mathbb{N}.
\end{equation} 
We observe that
  \begin{equation}\label{eq:3}
    f_n\to\tilde f\quad\text{in
    }L^{\frac{N}{2}+\epsilon}(B_{r_1})\quad\text{as }n\to\infty
  \end{equation}
  as a consequence of \eqref{eq:5}, \eqref{eq:26} and the Dominated
  Convergence Theorem if  assumption \eqref{h2} holds and $f_n$ is
  defined in \eqref{fn2}, in view of Remark \ref{H2 then N/2}; on
  the other hand \eqref{eq:3} is obvious if  assumption \eqref{h1} holds and $f_n$ is
  defined  in \eqref{fn1}.

Since under both assumptions \eqref{h1} and
  \eqref{h2}  we have that $\tilde f\in
  L^{\frac{N}{2}+\epsilon}(B_{r_1})$ (see Remark \ref{H2 then N/2}), by the absolute
        continuity of the Lebesgue integral we can choose 
  $r_0 \in (0,\min\{1,r_1\})$ such that
    \begin{equation} \label{beta1}
	\eta_{\tilde
      f}(r_0)<\frac{1}{2}\quad\text{and}\quad	c_0^{2}\frac{N-1+{r_1^2}}{N-1}
                \mathcal S_{N,q_\epsilon}{r_1^{\frac{4
                      \epsilon}{N+2 \epsilon}}}
                \|{\tilde f}\|_{L^{\frac{N}{2}+\epsilon}(B_{r_0})} < \frac14,
	\end{equation}
	where $q_\epsilon$ and $\eta_{\tilde f}$ are defined in \eqref{def eta}.

Let $U= u\circ F$, where $u$ is a fixed  weak solution to \eqref{op}
and $F$ is the diffeomorphism introduced in Section
\ref{sec:boundary}, so that $U$ weakly solves
\eqref{straightenedproblem}. For any $n \in\mathbb{ N}\setminus\{0\}$, we
      consider the following sequence of approximating problems, with
      potentials $f_n$ defined in \eqref{fn2}--\eqref{fn1}:
\begin{equation} \label{aprox prom}
	\begin{cases}
		-\mathop{\rm{div}}(A\nabla U_n)= f_n U_n, &\text{ in } \Omega_{n,r_0},  \\
	A\nabla U_n \cdot \nu=0,  &\text{ on }  \Gamma_{n,r_0},	\\
		\gamma_{n,r_0}(U_n)=\gamma_{n, r_0}(U), &\text{ on }  S_{n,r_0},
	\end{cases}
\end{equation}
with $r_0$  as in \eqref{beta1}.
A weak solution to problem \eqref{aprox prom} is a function $ U_n \in
H^1(\Omega_{n,r_0})$ such that
$U_n-U\in H^1_{0,S_{n,r_0}}(\Omega_{n,r_0})$ and 
\begin{equation*}
	\int_{\Omega_{n,r_0}}(A\nabla U_n \cdot\nabla \phi-f_nU_n \phi)\, dy =0	
\end{equation*}
for all $\phi \in H^1_{0,S_{n,r_0}}(\Omega_{n,r_0})$.
If $U_n$ weakly solves \eqref{aprox prom}, then $ W_n:=U-U_n\in
H^1_{0,S_{n,r_0}}(\Omega_{n,r_0})$ and 
\begin{equation} \label{ape}
	\int_{\Omega_{n,r_0}}(A\nabla  W_n \cdot\nabla \phi - f_n  W_n \phi)\, dy =\int_{\Omega_{n,r_0}}(A\nabla U \cdot\nabla \phi -f_n U \phi )\, dy
\end{equation}
for any  $\phi \in H^1_{0,S_{n,r_0}}(\Omega_{n,r_0})$.

For every $n\in{\mathbb N}\setminus\{0\}$, let us consider the bilinear form 
\begin{equation}\label{eq:Bn}
	B_n:H^1_{0,S_{n,r_0}}(\Omega_{n,r_0})\times
	H^1_{0,S_{n,r_0}}(\Omega_{n,r_0})\rightarrow \mathbb{R}, \quad
	B_n(v,\phi):=\int_{\Omega_{n,r_0}}(A\nabla v\cdot\nabla \phi - f_nv \phi)\,
	dy,
\end{equation}
and the functional
\begin{equation}\label{eq:Ln}
	L_n: H^1_{0,S_{n,r_0}}(\Omega_{n,r_0})\to\R,\quad 
	L_n(\phi):=\int_{\Omega_{n,r_0}}(A\nabla U \cdot\nabla \phi - f_n U \phi )\,
	dy.
\end{equation}

\begin{proposition}  \label{lmh}
	The bilinear form $B_n$ defined in \eqref{eq:Bn} is continuous and
	coercive; more precisely 
	\begin{equation}\label{coer}
		B_n(\phi,\phi)\ge \frac14  \norm{\phi}^2_{H^1_{0,S_{n,r_0}}(\Omega_{n,r_0})}\quad\text{for all $\phi\in H^1_{0,S_{n,r_0}}(\Omega_{n,r_0})$}.
	\end{equation}
	Furthermore the functional $L_n$ defined in \eqref{eq:Ln} 
	belongs to $(H^1_{0,S_{n,r_0}}(\Omega_{n,r_0}))^*$ and there exists a
	constant  $\ell > 0$ independent of $n$ such that
	\begin{equation}\label{cont}
		|L_n(\phi)|\le
		\ell\norm{\phi}_{H^1_{0,S_{n,r_0}}(\Omega_{n,r_0})}\quad\text{for all
			$\phi \in H^1_{0,S_{n,r}}(\Omega_{n,r_0})$}.	
	\end{equation}
\end{proposition}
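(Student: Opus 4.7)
The plan is to prove the three properties of $B_n$ and $L_n$ by exploiting the ellipticity bound \eqref{ellipticity}, the uniform pointwise control $|f_n|\le|\tilde f|$ coming from \eqref{eq:5} and \eqref{fn1}, and the $n$-independent inequality of Proposition \ref{Omega found}; the smallness condition \eqref{beta1} is what will make the constant in the coercivity estimate land at $\tfrac14$.

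First I would prove coercivity. For $\phi\in H^1_{0,S_{n,r_0}}(\Omega_{n,r_0})$, the ellipticity estimate \eqref{ellipticity} gives
\[
B_n(\phi,\phi)\ge \tfrac12\int_{\Omega_{n,r_0}}|\nabla\phi|^2\,dy-\int_{\Omega_{n,r_0}}|f_n|\phi^2\,dy.
\]
Since $|f_n|\le|\tilde f|$ a.e.\ (by \eqref{eq:5} in the $\eqref{h2}$ case and trivially in the $\eqref{h1}$ case), Proposition \ref{Omega found} applied to $h=\tilde f$ on $B_{r_0}$ together with the second inequality in \eqref{beta1} yields
\[
\int_{\Omega_{n,r_0}}|f_n|\phi^2\,dy\le \tfrac14\int_{\Omega_{n,r_0}}|\nabla\phi|^2\,dy,
\]
with a constant independent of $n$. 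Combining these two estimates proves \eqref{coer}. Continuity of $B_n$ will follow along the same lines: the bound $\|A\|\le 2$ from \eqref{A-oper-norm} handles the principal part via Cauchy--Schwarz, while the lower-order term is estimated by Cauchy--Schwarz in the form $\int|f_n||v||\phi|\le(\int|f_n|v^2)^{1/2}(\int|f_n|\phi^2)^{1/2}$ and then another application of Proposition \ref{Omega found} to both factors.

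For the functional $L_n$, I would split
\[
|L_n(\phi)|\le\int_{\Omega_{n,r_0}}|A\nabla U\cdot\nabla\phi|\,dy+\int_{\Omega_{n,r_0}}|f_n||U||\phi|\,dy.
\]
The first integral is bounded by $2\|\nabla U\|_{L^2(B_{r_0}\setminus\tilde\Gamma)}\|\nabla\phi\|_{L^2(\Omega_{n,r_0})}$ via \eqref{A-oper-norm} and Cauchy--Schwarz, giving a constant depending only on $U$. For the second integral I would apply Cauchy--Schwarz as
\[
\int_{\Omega_{n,r_0}}|f_n||U||\phi|\,dy\le\Bigl(\int_{\Omega_{n,r_0}}|f_n|U^2\,dy\Bigr)^{1/2}\Bigl(\int_{\Omega_{n,r_0}}|f_n|\phi^2\,dy\Bigr)^{1/2}.
\]
Using $|f_n|\le|\tilde f|$ once more, Proposition \ref{fond ineq} applied on $B_{r_0}\setminus\tilde\Gamma$ bounds the first factor by a quantity depending only on $U$, $\eta_{\tilde f}(r_0)$ and $r_0$, hence independent of $n$; Proposition \ref{Omega found} again takes care of the second factor in terms of $\|\nabla\phi\|_{L^2(\Omega_{n,r_0})}$. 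Putting these pieces together produces a constant $\ell>0$ satisfying \eqref{cont}.

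The only delicate point is ensuring that \emph{all} the constants are genuinely $n$-independent. This is precisely where Proposition \ref{Omega found} is crucial, since its constant $c_0^{2}\frac{N-1+r_1^2}{N-1}\mathcal S_{N,q_\epsilon}r_1^{4\epsilon/(N+2\epsilon)}$ does not see $n$, and where the uniform extension operator of Proposition \ref{pon ex} (which is built into the proof of Proposition \ref{Omega found}) plays its role. No other subtlety arises: the coercivity constant $\tfrac14$ drops out immediately from \eqref{beta1}, and continuity of $B_n$ and boundedness of $L_n$ are then routine applications of Cauchy--Schwarz combined with the same inequalities.
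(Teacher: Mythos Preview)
Your proof is correct and follows essentially the same approach as the paper: coercivity from \eqref{ellipticity}, $|f_n|\le|\tilde f|$, Proposition \ref{Omega found} and the second inequality in \eqref{beta1}; boundedness of $L_n$ via \eqref{A-oper-norm}, Cauchy--Schwarz with weight $|f_n|$, Proposition \ref{fond ineq} for the $U$-factor and Proposition \ref{Omega found} for the $\phi$-factor. The emphasis you place on the $n$-independence of the constants (through Proposition \ref{Omega found} and the uniform extension of Proposition \ref{pon ex}) is exactly the point.
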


\begin{proof}
  The continuity of $B_n$ and \eqref{coer} easily follow from
  \eqref{ellipticity},\eqref{eq:5}, \eqref{Omega found ineq} and \eqref{beta1}.
  Thanks to H\"older's inequality,
 \eqref{eq:5},
 \eqref{A-oper-norm}, \eqref{Found ineq}, \eqref{Omega
    found ineq} and \eqref{beta1}
	\begin{align*}
          &|L_n(\phi)|\le{2}\norm{\nabla
            {U}}_{L^2(\Omega_{n,r_0})}
            \norm{\phi}_{H^1_{0,S_{n,r_0}}(\Omega_{n,r_0})} 
            +\left(\int_{B_{r_0}}|{\tilde f}|{U}^2\, dx
            \right)^{\!\!\frac{1}{2}}\left(\int_{\Omega_{n,r_0}}|{\tilde f}|\phi^2 \, dx\right)^{\!\!\frac{1}{2}}\\
          &\le \left({2}\norm{\nabla
            {U}}_{L^2(B_{r_0}\setminus\tilde\Gamma)}+
            {\frac12}\sqrt{\eta_{\tilde f}(r_0)}\left(\int_{B_{r_0}
            \setminus \tilde\Gamma}
            |\nabla {U}|^2 \, dx+ \frac{1}{r_0}
            \int_{\partial B_{r_0}}{U}^2
            \,dS\right)^{\!\!\frac{1}{2}}\right)
            \norm{\phi}_{H^1_{0,S_{n,r_0}}(\Omega_{n,r_0})},
	\end{align*}
thus implying \eqref{cont}.
\end{proof}

\begin{corollary}\label{existance}
  Let $u$ be a weak solution to \eqref{op} and 
 $U= u\circ F$.  Let either \eqref{h1} hold
	and $\{f_n\}$ be as in \eqref{fn1}, or \eqref{h2} hold and $\{f_n\}$
	be as in \eqref{fn2}. Let $r_0$ be as in \eqref{beta1}
	and $\ell$ be as in Proposition \ref{lmh}. Then, for any
	$n \in {\mathbb N}\setminus\{0\}$, there exists a solution
	$ W_n \in H^1_{0,S_{n,r_0}}(\Omega_{n,r_0})$ of \eqref{ape} such that
	\begin{equation} \label{existance estimates}
		\norm{ W_n}_{H^1_{0,S_{n,r_0}}(\Omega_{n,r_0} )}\le 4\ell.
	\end{equation}
\end{corollary}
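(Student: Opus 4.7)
The plan is a direct application of the Lax--Milgram theorem: the hard work (establishing continuity/coercivity of $B_n$ and continuity of $L_n$, both with constants uniform in $n$) has already been carried out in Proposition \ref{lmh}, so the corollary is essentially an immediate consequence.

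First I would observe that, by Proposition \ref{Pon Prop}, the space $H^1_{0,S_{n,r_0}}(\Omega_{n,r_0})$ endowed with the norm \eqref{equivalent norm} is a Hilbert space. By Proposition \ref{lmh}, the bilinear form $B_n$ defined in \eqref{eq:Bn} is continuous and satisfies the coercivity estimate \eqref{coer}, while the linear functional $L_n$ defined in \eqref{eq:Ln} belongs to the dual space with operator norm bounded by $\ell$ (with $\ell$ independent of $n$, as recorded in \eqref{cont}). The weak formulation \eqref{ape} is precisely the equation $B_n(W_n,\phi)=L_n(\phi)$ for every $\phi \in H^1_{0,S_{n,r_0}}(\Omega_{n,r_0})$. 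Hence the Lax--Milgram theorem produces a (unique) solution $W_n\in H^1_{0,S_{n,r_0}}(\Omega_{n,r_0})$ of \eqref{ape}.

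To prove the uniform estimate \eqref{existance estimates} I would test \eqref{ape} against $\phi=W_n$ itself. Combining the coercivity \eqref{coer} on the left-hand side with the continuity bound \eqref{cont} on the right-hand side yields
\[
\tfrac{1}{4}\,\norm{W_n}^2_{H^1_{0,S_{n,r_0}}(\Omega_{n,r_0})}
\le B_n(W_n,W_n)=L_n(W_n)\le \ell\,\norm{W_n}_{H^1_{0,S_{n,r_0}}(\Omega_{n,r_0})},
\]
and dividing by $\norm{W_n}_{H^1_{0,S_{n,r_0}}(\Omega_{n,r_0})}$ (the case $W_n\equiv 0$ being trivial) gives $\norm{W_n}_{H^1_{0,S_{n,r_0}}(\Omega_{n,r_0})}\le 4\ell$, as required.

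There is no real obstacle in this argument; the only point requiring vigilance is the uniformity in $n$ of all constants (the coercivity constant $\tfrac14$, the continuity constant of $L_n$, and the constants hidden in the extension and Poincaré inequalities). These were precisely the reasons for constructing the uniform extension operator in Proposition \ref{pon ex}, the uniform Poincaré inequality in Proposition \ref{Pon Prop}, and the uniform potential bound \eqref{Omega found ineq}, and they have already been consolidated in Proposition \ref{lmh}; consequently $4\ell$ in \eqref{existance estimates} is independent of $n$.
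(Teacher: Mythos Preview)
Your proof is correct and follows exactly the same approach as the paper: apply Lax--Milgram using Proposition \ref{lmh}, then test \eqref{ape} with $\phi=W_n$ and combine \eqref{coer} with \eqref{cont} to obtain \eqref{existance estimates}. The paper's proof is essentially a two-sentence version of what you wrote.
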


\begin{proof}
	The existence of a solution $W_n$ of \eqref{ape}
        follows from the
	Lax-Milgram Theorem, taking into account Proposition
	\ref{lmh}. Estimate  \eqref{existance estimates} follows from
	\eqref{coer} and  \eqref{cont} with $\phi= W_n$.
\end{proof}

We are now in position to prove the main result of this section.

\begin{theorem} \label{aprox theor} Suppose that $f$ satisfies
	either   \eqref{h1} or \eqref{h2}, $u$ is a weak solution of
	\eqref{op}, and $U=u \circ F$ with $F$ as in Section
\ref{sec:boundary}. Let
	$\{f_n\}_{n \in \mathbb{N}}$ satisfies \eqref{fn1} under hypothesis
	\eqref{h1} or \eqref{fn2} under  hypothesis
	\eqref{h2}.  Let $r_0 \in (0,r_1)$ be as \eqref{beta1}.
	Then there exists 
	$\{U_n\}_{n \in {\mathbb N}\setminus\{0\}} \subset H^1(B_{r_0} \setminus \tilde  \Gamma)$
	such that $U_n$ weakly solves \eqref{aprox prom} for
	any $n\in{\mathbb N\setminus\{0\}}$ and $U_n \rightarrow U$ in
	$H^1(B_{r_0} \setminus \tilde \Gamma)$ as $n\to\infty$. Furthermore
	$U_n \in H^2(\Omega_{n,r})$ for any $r \in(0,r_0)$ and
	$n \in{\mathbb N}\setminus\{0\}$.
\end{theorem}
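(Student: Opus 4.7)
The plan is to define $U_n := U - \xi^0_{n,r_0}(W_n)\big|_{B_{r_0}}$, where $W_n \in H^1_{0,S_{n,r_0}}(\Omega_{n,r_0})$ is the solution of \eqref{ape} provided by Corollary \ref{existance}, and $\xi^0_{n,r_0}$ is the uniformly-bounded extension operator from Proposition \ref{pon ex}. Then $U_n \in H^1(B_{r_0}\setminus\tilde\Gamma)$ automatically, and on $\Omega_{n,r_0}$ one has $U_n=U-W_n$, so that, by \eqref{ape} and the definition of $L_n$ in \eqref{eq:Ln}, $U_n$ solves \eqref{aprox prom} weakly in $\Omega_{n,r_0}$.

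For the convergence, by \eqref{pon ex ineq} and \eqref{equivalent norm} it is enough to prove that $\|W_n\|_{H^1_{0,S_{n,r_0}}}\to 0$. The key idea is to use $\xi^0_{n,r_0}(W_n) \in H^1_{0,\partial B_{r_1}}(B_{r_1}\setminus\tilde\Gamma)$ as a test function for the equation \eqref{straightenedproblem} satisfied by $U$. Splitting the resulting integral over $B_{r_1}\setminus\tilde\Gamma$ into integrals over $\Omega_{n,r_0}$, $\Omega_{n,r_1}\setminus\Omega_{n,r_0}$, and $B_{r_1}\setminus(\Omega_{n,r_1}\cup\tilde\Gamma)$, and using the fact that $\xi^0_{n,r_0}(W_n)$ vanishes identically on the middle piece by \eqref{pon ex properties}, I obtain
\begin{equation*}
  L_n(W_n)=\int_{\Omega_{n,r_0}}(\tilde f-f_n)UW_n\,dy-\!\int_{B_{r_1}\setminus(\Omega_{n,r_1}\cup\tilde\Gamma)}\!\!\bigl(A\nabla U\cdot\nabla\xi^0_{n,r_0}(W_n)-\tilde f\,U\,\xi^0_{n,r_0}(W_n)\bigr)dy.
\end{equation*}
Each piece of the right-hand side will be estimated by $o(1)\,\|W_n\|_{H^1_{0,S_{n,r_0}}}$: the first term via H\"older's inequality combined with \eqref{Sobolev ineq}, the uniform bound on $\xi^0_{n,r_0}$, and the convergence $f_n\to\tilde f$ in $L^{N/2+\epsilon}$ given by \eqref{eq:3}; the second term because $|B_{r_1}\setminus(\Omega_{n,r_1}\cup\tilde\Gamma)|\to 0$ as $n\to\infty$ (since $g_n(y_N)=ny_N^4\to\infty$ for every $y_N\neq 0$), so that $\|\nabla U\|_{L^2}$ and $\|U\|_{L^{q_\epsilon}}$ on this shrinking set vanish by absolute continuity of the Lebesgue integral, while $\|\xi^0_{n,r_0}(W_n)\|_{H^1(B_{r_1}\setminus\tilde\Gamma)}\le c_0\|W_n\|_{H^1_{0,S_{n,r_0}}}$. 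Combining with the coercivity \eqref{coer}, $\tfrac14\|W_n\|^2_{H^1_{0,S_{n,r_0}}}\le B_n(W_n,W_n)=L_n(W_n)=o(1)\|W_n\|_{H^1_{0,S_{n,r_0}}}$, which forces $\|W_n\|_{H^1_{0,S_{n,r_0}}}\to 0$ and hence $U_n\to U$ in $H^1(B_{r_0}\setminus\tilde\Gamma)$.

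For the $H^2$-regularity, fix $r\in(0,r_0)$. Since $\overline{\Omega_{n,r}}$ does not intersect the Dirichlet portion $S_{n,r_0}$, the relevant boundary of $\Omega_{n,r}$ inside $\Omega_{n,r_0}$ consists only of the $C^2$-smooth Neumann portion $\Gamma_{n,r_0}\cap\partial\Omega_{n,r}$. The matrix $A$ has Lipschitz entries by Proposition \ref{diffeomorphism}, and a standard bootstrap (using $U_n\in H^1\hookrightarrow L^{2^*}$, then H\"older with $f_n\in L^{N/2+\epsilon}$, or directly boundedness of $f_n$ in the case \eqref{fn2}) yields $f_nU_n\in L^2(\Omega_{n,r_0})$. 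Classical elliptic regularity for divergence-form equations with Lipschitz coefficients, homogeneous conormal Neumann data on a smooth portion of the boundary, and interior data elsewhere then gives $U_n\in H^2(\Omega_{n,r})$.

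The main obstacle is the second step: making $L_n(W_n)$ small requires simultaneously exploiting (a) the shrinking of the bad region $B_{r_1}\setminus\Omega_{n,r_1}$ near the crack, (b) the $n$-uniform bound on $\xi^0_{n,r_0}$ (delicate for Neumann, cf.\ Proposition \ref{pon ex}), (c) the $L^{N/2+\epsilon}$-approximation $f_n\to\tilde f$, and (d) the $n$-uniform coercivity \eqref{coer}; failure of any one of these would destroy the strong $H^1$-convergence. The gain coming from the fact that $\xi^0_{n,r_0}(W_n)$ is genuinely zero on $\Omega_{n,r_1}\setminus\Omega_{n,r_0}$, so that the "middle annular" contribution drops out entirely, is what makes the error localize on a set of vanishing measure.
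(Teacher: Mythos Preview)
Your proof is correct and follows essentially the same route as the paper: construct $U_n=U-\xi^0_{n,r_0}(W_n)$, rewrite $B_n(W_n,W_n)=L_n(W_n)$ via \eqref{ape}, test \eqref{straightenedproblem} with $\xi^0_{n,r_0}(W_n)$ to localize $L_n(W_n)$ on the shrinking set $O_n=(B_{r_1}\setminus\tilde\Gamma)\setminus\Omega_{n,r_1}$ plus a term controlled by $\|\tilde f-f_n\|_{L^{N/2+\epsilon}}$, and conclude by absolute continuity of the integral and \eqref{eq:3}. The only cosmetic difference is that the paper uses the a~priori bound $\|W_n\|\le 4\ell$ from Corollary~\ref{existance} to close the estimate directly, whereas you keep the factor $\|W_n\|_{H^1_{0,S_{n,r_0}}}$ and divide through at the end; both are fine.

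One small imprecision in the $H^2$ step: under \eqref{h1}, H\"older with $f_n=\tilde f\in L^{N/2+\epsilon}$ and $U_n\in L^{2^*}$ does \emph{not} land in $L^2$ (the exponents do not add up to $1/2$). You need to use the full assumption $\tilde f\in W^{1,N/2+\epsilon}(B_{r_1}\setminus\tilde\Gamma)$, which by Sobolev embedding gives $\tilde f\in L^N$ (for $N\ge 3$; a similar adjustment works for $N=2$), and then $L^N\cdot L^{2^*}\subset L^2$. With this correction, your regularity argument matches the paper's.
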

\begin{proof}
  Let $r_0 \in (0,r_1)$ be as in \eqref{beta1}. For any
  $n \in{\mathbb N}\setminus\{0\}$, let
  $W_n \in H^1_{0,S_{n,r_0}}(\Omega_{n,r_0})$ be the solution to
  \eqref{ape} given by Corollary \ref{existance}. Then $U- W_n$ weakly
  solves problem \eqref{aprox prom} and we define
  $U_n:= U-\xi^0_{n,r_0}( W_n)$, with $\xi^0_{n,r_0}$
    being the extension operator introduced in Proposition \ref{pon ex}.  We observe that
  $U_n \in H^1(B_{r_0}\setminus \tilde \Gamma)$. To prove that $U_n$
  converges to $U$ in $H^1(B_{r_0} \setminus \tilde \Gamma)$ as
  $n\to \infty$, we notice that
\[\norm{U-U_n}^2_{H^1(B_{r_0}\setminus \tilde \Gamma)} \le c^2_0 \norm{ W_n}_{H^1(\Omega_{n,r_0})}^2 
          \le 4\,c^2_0\,\frac{
            N-1+r_0^2}{N-1}\int_{\Omega_{n,r_0}}(A\nabla W_n\cdot
          \nabla W_n- f_n W_n^2)\, dy,
        \]
 by Proposition \ref{pon
          ex}, \eqref{Pon ineq}, and \eqref{coer}.
        Therefore it is enough to prove that
	\begin{equation}\label{lim=0}
		\lim_{n \to \infty}\int_{\Omega_{n,r_0}}(A\nabla  W_n\cdot \nabla  W_n- f_n W_n^2)\, dy=0.
	\end{equation}
	Let
       
        \begin{equation}\label{eq:30}
         O_n:=(B_{r_1}\setminus \tilde \Gamma) \setminus
          \Omega_{n,r_1}
        \end{equation}
for any  $n\in{\mathbb N}\setminus\{0\}$. Since $W_n \in H^1_{0,S_{n,r_0}}(\Omega_{n,r_0})$
	solves \eqref{ape} and $U$ is a solution to \eqref{straightenedproblem}, by
	H\"older's inequality, \eqref{A-oper-norm} and Proposition \ref{pon ex} we have that
	\begin{align*}
          \bigg|&\int_{\Omega_{n,r_0}}(A\nabla  W_n\cdot \nabla  W_n- f_n W_n^2)\,
                  dy\bigg|=\left|\int_{\Omega_{n,r_1}}(A\nabla U\cdot
                  \nabla( \xi^0_{n,r_0}( W_n))
                  - f_n U \,\xi^0_{n,r_0}( W_n))\, dy\right|\\
		&=\bigg|\int_{B_{r_1}\setminus \tilde \Gamma}(A\nabla U\cdot \nabla(
           \xi^0_{n,r_0}( W_n))- f_n U \,\xi^0_{n,r_0}( W_n))\,dy\\
		&\qquad\qquad-\int_{O_n}(A\nabla U \cdot\nabla(
           \xi^0_{n,r_0}( W_n))-
           f_nU \, \xi^0_{n,r_0}( W_n))\,dy\bigg|\\
		&=\bigg|\int_{B_{r_1}\setminus \tilde \Gamma}(A\nabla U\cdot \nabla(
           \xi^0_{n,r_0}( W_n))- \tilde f U \,\xi^0_{n,r_0}( W_n))\,dy+\int_{B_{r_1}\setminus \tilde \Gamma}(\tilde f-f_n) U \,\xi^0_{n,r_0}( W_n))\,dy\\
		&\qquad\qquad-\int_{O_n}(A\nabla U \cdot\nabla(
           \xi^0_{n,r_0}( W_n))-
           f_nU \, \xi^0_{n,r_0}( W_n))\,dy\bigg|\\
		&\leq\left|\int_{O_n}(A\nabla U \cdot\nabla(
           \xi^0_{n,r_0}( W_n))-
           f_nU\, \xi^0_{n,r_0}( W_n))\,dy \right|+
\left|\int_{B_{r_1}\setminus \tilde \Gamma}(\tilde f-f_n) U \,\xi^0_{n,r_0}( W_n)\,dy\right|
                          \\
		&\le 2\norm{\nabla U}_{L^2(O_n)}
           \norm{\nabla \xi^0_{n,r_0}( W_n)}_{L^2(B_{r_1}\setminus \tilde \Gamma)}+
           \| f_n\|_{L^{\frac{N}{2}+\epsilon}(O_n)}\norm{U}_{L^{q_\epsilon}(O_n)}
           \norm{\xi^0_{n,r_0}( W_n)}_{L^{q_\epsilon}(B_{r_1})}\\
&\quad+\|\tilde f- f_n\|_{L^{\frac{N}{2}+\epsilon}(B_{r_1})}\norm{U}_{L^{q_\epsilon}(B_{r_1})}
           \norm{\xi^0_{n,r_0}( W_n)}_{L^{q_\epsilon}(B_{r_1})}\\
		&\le4 c_0\ell \frac{\sqrt{N-1+r^2_0}}{\sqrt{N-1}}
           \Big(2
           \norm{\nabla U}_{L^2(O_n)}
           +\sqrt{\mathcal S_{N,
           q_\epsilon}}r_1^{\frac{2\epsilon}{N+2\epsilon}}
           \|{\tilde f}\|_{L^{\frac{N}{2}+\epsilon}(O_n)}\norm{U}_{L^{q_\epsilon}(O_n)}\\
                          &\hskip6cm
          +\sqrt{\mathcal S_{N,
           q_\epsilon}}r_1^{\frac{2\epsilon}{N+2\epsilon}}\|\tilde f- f_n\|_{L^{\frac{N}{2}+\epsilon}(B_{r_1})}\norm{U}_{L^{q_\epsilon}(B_{r_1})}\Big),
	\end{align*}
	where $q_\epsilon$ is defined in \eqref{def eta} and  we have
        used \eqref{eq:5},
        \eqref{Sobolev ineq}, \eqref{pon ex ineq}, \eqref{Pon ineq},  and \eqref{existance estimates} in the last inequality. We observe that 
	\[
          \lim_{n \to \infty}|O_n|=0,
        \] 
	where $|O_n|$ is the $N$-dimensional Lebesgue
        measure of $O_n$. Then, since $\nabla U\in
        L^2(B_{r_1}\setminus\tilde\Gamma)$, $U \in
	L^{q_\epsilon}(B_{r_1})$ by Proposition \ref{sobolev ineq}, and
$\tilde f\in L^{\frac{N}{2}+
          \epsilon}(B_{r_1})$, \eqref{lim=0} follows by the absolute
        continuity of the integral
 and convergence \eqref{eq:3}.

We observe that $f_n U_n\in L^2(\Omega_{n,r_0})$. Indeed,
   under assumption \eqref{h1}, by Remark
   \ref{tilde-f-regularity} we have that
   $\tilde f \in W^{1,\frac{N}{2}+\epsilon}(B_{r_1}\setminus \tilde\Gamma)$
   and then, by Sobolev embeddings and H\"older's inequality, we easily
   obtain that $f_n U_n=\tilde f U_n\in L^2(\Omega_{n,r_0})$.
Under assumption \eqref{h2}, $f_n$ is defined in \eqref{fn2} and
$f_n\in L^\infty(B_{r_1})$, hence $f_n U_n\in L^2(\Omega_{n,r_0})$. 

Since $\Gamma_{n,r_0}$ is $C^\infty$-smooth and
$f_nU_n \in L^2(\Omega_{n,r_0})$, by classical elliptic regularity
theory, see e.g. \cite[Theorem 2.2.2.5]{grisvard}, we deduce that
$U_n \in H^2(\Omega_{n,r})$ for any $r \in(0,r_0)$. The proof is
thereby complete.
\end{proof}

\section{The Almgren type frequency function}\label{sec:almgr-type-freq}
Let $u\in H^1(B_R\setminus\Gamma)$ be a non-trivial weak solution to
\eqref{op} and $U=u \circ F\in H^1(B_{r_1} \setminus \tilde \Gamma)$
be the corresponding solution to \eqref{straightenedproblem}.  Let
$r_0 \in (0,\min\{1,r_1\})$ be as in \eqref{beta1}.  For any
$r \in (0,r_0]$, we define
\begin{equation}\label{H}
	H(r):=\frac{1}{r^{N-1}}\int_{\partial B_r}\mu \,U^2\, dS,
\end{equation} 
where $\mu$ is  the  function introduced in \eqref{mu-beta}, and 
\begin{equation}\label{E}
  E(r):=\frac{1}{r^{N-2}}\int_{B_r\setminus \tilde  \Gamma}
  (A\nabla U \cdot \nabla U-\tilde f\,U^2)\,dy.
\end{equation}

\begin{proposition}  \label{H positive}
	If $r \in (0,r_0]$ then $H(r) > 0$.
\end{proposition}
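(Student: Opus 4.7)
The plan is to argue by contradiction. Suppose $H(\bar r)=0$ for some $\bar r\in(0,r_0]$. Since $\mu\ge\tfrac12$ by \eqref{mu-estimates}, this forces $\gamma_{\bar r}(U)=0$, i.e., $U=0$ on $\partial B_{\bar r}$ in the trace sense.

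The core of the argument is then to show $U\equiv 0$ in $B_{\bar r}$. I would extend $U\big|_{B_{\bar r}\setminus\tilde\Gamma}$ by zero outside $B_{\bar r}$; calling this extension $\tilde U$, the classical extension-by-zero lemma applied separately on the Lipschitz half-balls $B_{r_1}^+$ and $B_{r_1}^-$---using the vanishing trace of $U$ on $\partial B_{\bar r}^\pm\cap\partial B_{\bar r}$ granted by Proposition \ref{trace r}---yields $\tilde U\in H^1(B_{r_1}\setminus\tilde\Gamma)$, and its vanishing on $B_{r_1}\setminus B_{\bar r}$ places it in $H^1_{0,\partial B_{r_1}}(B_{r_1}\setminus\tilde\Gamma)$. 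Using $\tilde U$ as a test function in the weak formulation of \eqref{straightenedproblem} gives
\begin{equation*}
\int_{B_{\bar r}\setminus\tilde\Gamma}(A\nabla U\cdot\nabla U-\tilde f\,U^2)\,dy=0.
\end{equation*}
Since $\eta_{\tilde f}$ is nondecreasing in $r$, condition \eqref{beta1} implies $\eta_{\tilde f}(\bar r)<\tfrac12$, so the coercivity estimate \eqref{nabla ineq} applies with $v=U$ and $h=\tilde f$; both terms on its right-hand side vanish (the volume integral by the identity above, the boundary integral by $\gamma_{\bar r}(U)=0$), so $\nabla U\equiv 0$ a.e.\ on $B_{\bar r}\setminus\tilde\Gamma$. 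Since $B_{\bar r}\setminus\tilde\Gamma$ is connected (the half-hyperplane $\tilde\Gamma$ does not disconnect the ball) and $U=0$ on $\partial B_{\bar r}$, we conclude $U\equiv 0$ in $B_{\bar r}$.

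The contradiction with the non-triviality of $u$ then follows from the classical interior strong unique continuation principle (see e.g.\ \cite{jerisonkenig1985,koch2001carleman}) applied to $-\operatorname{div}(A\nabla U)=\tilde f U$: since the leading matrix $A$ is Lipschitz and uniformly elliptic by Proposition \ref{diffeomorphism} and $\tilde f$ lies in $L^{N/2+\epsilon}_{\mathrm{loc}}$ away from the origin (even under \eqref{h2}), propagating the vanishing of $U$ through balls covering the connected open set $B_{r_1}\setminus(\tilde\Gamma\cup\{0\})$ gives $U\equiv 0$ on $B_{r_1}$, hence $u\equiv 0$ on $F(B_{r_1})$; a further application of classical unique continuation in the connected domain $B_R\setminus\Gamma$ then yields $u\equiv 0$, against the standing hypothesis. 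The main technical point in this plan is the extension-by-zero step---verifying that $\tilde U$ is genuinely $H^1$ across $\partial B_{\bar r}$ inside $B_{r_1}\setminus\tilde\Gamma$---which rests on the well-definedness of the zero trace of $U$ on $\partial B_{\bar r}^\pm$.
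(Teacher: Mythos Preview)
Your argument is correct and follows essentially the same route as the paper's proof: assume $H(\bar r)=0$, use $\mu\ge\tfrac12$ to get vanishing trace, test the equation with (the zero extension of) $U$ and apply \eqref{nabla ineq} to obtain $U\equiv0$ in $B_{\bar r}$, then invoke classical interior unique continuation to reach a contradiction. You have simply made explicit the extension-by-zero and connectedness steps that the paper's terse proof leaves implicit.
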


\begin{proof}
  We suppose by contradiction that there exists $r\in (0,r_0]$ such
  that $H(r) = 0$. By \eqref{mu-estimates}, it follows that $U$ weakly
  solves \eqref{straightenedproblem} with the extra condition $U=0$ on
  $\partial B_{r}$. Then by \eqref{nabla ineq} we obtain that $U=0$ on
  $B_r$.  By classical unique continuation principles for elliptic
  equations, see e.g. \cite{garofalo1986monotonicity}, we conclude
  that $u=0$ on $B_{R}$, which is a contradiction.
\end{proof}

\begin{proposition}\label{p:derH} 
	We have that $H \in W^{1,1}_{\rm loc}((0,r_0])$ and 
	\begin{align}\label{H'}
		H'(r)&=\frac{1}{r^{N-1}}\left(2\int_{\partial B_r}\mu U \pd{U}{\nu} \, dS +\int_{\partial B_r} U^2 \nabla \mu\cdot \nu \,dS\right)\\
		&=\frac{2}{r^{N-1}}\int_{\partial B_r}\mu U \pd{U}{\nu} \, dS+H(r)O(1) \quad \text{as } r \to 0^+,\notag
	\end{align}
	in a distributional sense and for a.e. $r \in (0,r_0)$.
\end{proposition}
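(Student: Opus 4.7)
The plan is to treat $H(r)$ as a boundary integral and use a change of variables on the unit sphere. Setting $y=r\theta$ with $\theta\in\mathbb S^{N-1}$, one has
\[
H(r)=\int_{\mathbb S^{N-1}}\mu(r\theta)\,U(r\theta)^2\,d\sigma(\theta),
\]
so the problem reduces to differentiation under the integral sign. Since $U$ only belongs to $H^1(B_{r_1}\setminus\tilde\Gamma)$, I would first apply the even reflections $\mathcal R^\pm U\in H^1(B_{r_1})$ from \eqref{even re +}--\eqref{even re -}, so that the identities \eqref{even re p1}, \eqref{even re nabla}, \eqref{even re part} permit every term of \eqref{H'} to split additively over $\mathcal R^+U$ and $\mathcal R^-U$, with no contribution from $\tilde\Gamma$.

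For any $v\in H^1(B_{r_1})$, it is classical (approximation by $C^\infty(\overline B_{r_1})$ functions plus the coarea formula) that the map $r\mapsto v(r\,\cdot)$ belongs to $W^{1,1}_{\rm loc}((0,r_1);L^2(\mathbb S^{N-1}))$ with weak derivative $\theta\mapsto\theta\cdot\nabla v(r\theta)$. Combined with the Lipschitz regularity of $\mu$ (Proposition \ref{beta-properties}), standard differentiation under the integral gives, in the distributional sense,
\[
\frac{d}{dr}\int_{\mathbb S^{N-1}}\mu(r\theta)v(r\theta)^2\,d\sigma
=\int_{\mathbb S^{N-1}}\bigl(\nabla\mu(r\theta)\cdot\theta\bigr)v(r\theta)^2\,d\sigma
+2\int_{\mathbb S^{N-1}}\mu(r\theta)\,v(r\theta)\,\nabla v(r\theta)\cdot\theta\,d\sigma.
\]
Undoing the change of variables and recalling that $\nu=\theta$ on $\partial B_r$ yields the first line of \eqref{H'} for $\mathcal R^\pm U$; summing them via \eqref{even re part} gives it for $U$. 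The $L^1_{\rm loc}$ membership of $H'$ follows from the trace theorem and Cauchy--Schwarz applied to the two integrals on the right.

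The second equality in \eqref{H'} is an estimate of the term involving $\nabla\mu$: by \eqref{nabla-mu-O} there exists $C>0$ such that $|\nabla\mu|\le C$ on $B_{r_0}$, hence, using $\mu\ge \tfrac12$ from \eqref{mu-estimates},
\[
\frac{1}{r^{N-1}}\left|\int_{\partial B_r}U^2\,\nabla\mu\cdot\nu\,dS\right|
\le \frac{C}{r^{N-1}}\int_{\partial B_r}U^2\,dS
\le 2C\,H(r),
\]
which is the claimed $H(r)\,O(1)$ remainder as $r\to 0^+$.

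The main obstacle is the rigorous differentiation under the integral sign for a function that is only $H^1$ on a cracked domain, and not smooth or continuous up to $\tilde\Gamma$. This is precisely where the even reflection trick is essential: it converts $U$ into two $H^1$-functions on the full ball, to which the classical $W^{1,1}_{\rm loc}((0,r_1);L^2(\mathbb S^{N-1}))$ characterization of radial slices applies, while \eqref{even re part} guarantees no additional interface terms appear when recombining the pieces.
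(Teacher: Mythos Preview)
Your proof is correct and close in spirit to the paper's. The paper avoids even reflections and instead computes the distributional derivative directly: it pairs $H$ with $\phi'$ for $\phi\in C_c^\infty(0,r_0)$, converts $\int_0^{r_0}H(r)\phi'(r)\,dr$ into volume integrals over $B_{r_0}^+$ and $B_{r_0}^-$ separately (where $U$ is genuinely $H^1$), and integrates by parts against the radial field $y/|y|^N$, using that $\mathop{\rm div}(y/|y|^N)=0$ and that no boundary contribution arises on $\{y_N=0\}$ since $y\cdot e_N=0$ there. Your reflection step and the paper's $B^\pm$-splitting are equivalent devices for handling the crack; you then invoke a general radial-slice differentiability lemma where the paper carries out the same computation explicitly. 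The treatment of the $\nabla\mu$ remainder via \eqref{nabla-mu-O} and \eqref{mu-estimates} is identical in both.

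One small point worth making explicit in your write-up: the identities \eqref{even re p1}--\eqref{even re part} are stated for pure squares, so the splitting of the cross term $\int_{\partial B_r}\mu\,U\,\partial_\nu U\,dS$ and the compatibility of $\mu$ and $\nabla\mu\cdot\nu$ with reflection (which relies on $A$ being block-diagonal as in \eqref{matrix-A} and depending only on $y''$) deserve a line of justification, though both hold.
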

\begin{remark}\label{rem:gradsense}
	To explain in what sense the term $\frac{\partial U}{\partial \nu}$
	 in \eqref{H'} is meant, we observe that, if $\nabla U$ is the
	distributional gradient of $U$ in $B_{r_1}\setminus \tilde \Gamma$, then
	$\nabla U\in L^2(B_{r_1},\R^N)$ and 
	$\frac{\partial U}{\partial \nu}:=\nabla U\cdot \frac{y}{|y|}\in
	L^2(B_{r_1})$. By the Coarea Formula it follows that $\nabla U\in L^2(\partial
	B_r,\R^N)$ and
	$\frac{\partial U}{\partial \nu}\in
	L^2(\partial B_r)$ for a.e. $r\in(0,r_1)$.
\end{remark}
\begin{proof}
	For any $\phi \in C^\infty_0(0,r_0)$ we define $v(y):=\phi(|y|)$. Then we have 
\begin{align*}
  \int_0^{r_0}& H(r)\phi'(r)\, dy
         =\int_0^{r_0}\frac{1}{r^{N-1}}\bigg(
         \int_{\partial B_r}\mu U ^2 \,dS\bigg)\;\phi'(r)\, dr\\
       &=\int_{B^+_{r_0}}\frac{1}{|y|^{N}}\mu(y)U ^2 (y) \nabla v(y) \cdot y\, dy+
         \int_{B ^-_{r_0}}\frac{1}{|y|^{N}}\mu(y)U ^2 (y) \nabla v(y) \cdot y\, dy\\
       &=-\int_{B_{r_0}\setminus\tilde\Gamma}\frac{1}{|y|^{N}}(2\mu(y)v(y)U(y) \nabla U(y) \cdot y + v(y)U^2(y) \nabla \mu(y)\cdot y)\,dy\\
       &=-\int_0^{r_0}\frac{2}{r^{N-1}}\bigg(\int_{\partial B_r}\mu U
         \pd{U}{\nu} \,dS\bigg)\phi(r)\, dr-\int_0^{r_0}\frac{1}{r^{N-1}}\bigg(\int_{\partial B_r} U^2 \nabla \mu\cdot \nu \,dS\bigg)\phi(r)\, dr,
\end{align*}
	which proves \eqref{H'} thanks to \eqref{nabla-mu-O}.
	Since $r^{-N+1}$ is bounded in any compact subset of $(0,r_0]$,
        then, by \eqref{mu-estimates}, \eqref{nabla-mu-O} and the
        Coarea Formula, $H$ and $H'$ are locally integrable
so that $H \in W^{1,1}_{\rm loc}((0,r_0])$. 
\end{proof}

  Now we turn our attention to $E$. Henceforth we let $\{f_n\}$
be as in 
\eqref{fn1}, if $f$ satisfies \eqref{h1}, or as in \eqref{fn2}, if $f$
satisfies \eqref{h2}, and we consider the sequence $\{U_n\}$
converging to $U$ in $H^1(B_{r_0}\setminus\tilde \Gamma)$ provided
by  Theorem \ref{aprox theor}.

\begin{remark}\label{f-limits}
  By Proposition \ref{sobolev ineq} and \eqref{def eta}, $U_n\to U$ in
  $L^{q_\epsilon}(B_{r_0})$. Then, since $f_n\to \tilde f$ in
  $L^{\frac{N}{2}+\epsilon}(B_{r_0})$ by \eqref{eq:3}, from
  H\"older's inequality it easily follows that 
	\begin{equation}\label{limitL11}
		\lim_{n \to \infty}\int_{B_{r_0}}|\tilde f\,U^2- f_n\, U_n^2| \, dy=0.
	\end{equation}
Moreover, if $f$ satisfies \eqref{h1}, $\nabla \tilde f\in
   L^{\frac{N}{2}+\epsilon}(B_{r_0},\R^N)$ and hence
\begin{equation}\label{limitL12}
	\lim_{n \to \infty}\int_{B_{r_0}\setminus \Gamma}|(\nabla
        \tilde f \cdot \beta) \, (U^2-U_n^2)| \, dx=0,
\end{equation}
since the vector field $\beta$ defined in \eqref{mu-beta} is bounded
in view  \eqref{eq:27}.	
\end{remark}

\begin{lemma} \label{limitboundary}
  If $F_n \to F$ in $L^1(B_{r_0})$,
  then there exists a subsequence $\{F_{n_k}\}_{k \in \mathbb{N}}$
  such that, for a.e. $ r \in (0,r_0)$,
  \[
    \lim_{k \to \infty}\int_{\partial B_r}|F-F_{n_k}|\, dS=0 \quad
    \text{and} \quad \lim_{k \to \infty}\int_{S_{n_k,r}}F_{n_k}\,
    dS=\int_{\partial B_r} F\, dS,
  \]
 where the notation $S_{n,r}$ has been introduced in \eqref{Omega Gamma}.
\end{lemma}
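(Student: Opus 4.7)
The plan is to bootstrap from the bulk $L^1$-convergence to a.e.\ boundary convergence via the Coarea Formula, and then exploit the fact that $S_{n,r}$ fills up $\partial B_r$ asymptotically, up to the negligible equatorial slice $\{y_N=0\}\cap\partial B_r$.

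First, I would apply the Coarea Formula to $|F_n-F|\in L^1(B_{r_0})$ to get
\[
\int_0^{r_0}\bigg(\int_{\partial B_r}|F_n-F|\,dS\bigg)\,dr=\int_{B_{r_0}}|F_n-F|\,dy\longrightarrow 0.
\]
Thus the sequence of functions $r\mapsto\int_{\partial B_r}|F_n-F|\,dS$ tends to $0$ in $L^1(0,r_0)$; extracting a subsequence $\{F_{n_k}\}$, I obtain the first pointwise a.e.\ limit. By the same Coarea/Fubini argument applied to $|F|$, one also has $F\in L^1(\partial B_r)$ for a.e.\ $r\in(0,r_0)$.

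Second, I would analyse the geometry of
\[
S_{n,r}=\{y\in\partial B_r:y_{N-1}<n\,y_N^4\}.
\]
For any $y\in\partial B_r$ with $y_N\neq0$ the quantity $n\,y_N^4$ diverges to $+\infty$ while $|y_{N-1}|\le r$ remains bounded, hence $y\in S_{n,r}$ for all sufficiently large $n$. Since $\{y_N=0\}\cap\partial B_r$ has zero $(N-1)$-dimensional Hausdorff measure, it follows that $\chi_{S_{n,r}}\to 1$ a.e.\ on $\partial B_r$, and by dominated convergence $|\partial B_r\setminus S_{n,r}|\to 0$, where $|\cdot|$ denotes the surface measure on $\partial B_r$.

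Finally, for every $r$ in the full-measure subset of $(0,r_0)$ for which both the first limit and $F\in L^1(\partial B_r)$ hold, I split
\[
\bigg|\int_{S_{n_k,r}}F_{n_k}\,dS-\int_{\partial B_r}F\,dS\bigg|
\le\int_{\partial B_r}|F_{n_k}-F|\,dS+\int_{\partial B_r\setminus S_{n_k,r}}|F|\,dS.
\]
The first summand vanishes along the selected subsequence by the first claim. The second summand vanishes by absolute continuity of the Lebesgue integral, since $F\in L^1(\partial B_r)$ and $|\partial B_r\setminus S_{n_k,r}|\to 0$. There is no serious obstacle here; the only mild subtlety is that the subsequence must be chosen \emph{independently of} $r$, which is why the reduction to $L^1$-convergence in $r$ of the boundary integrals and the subsequent diagonal/a.e.\ extraction is the natural starting point.
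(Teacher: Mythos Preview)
Your proof is correct and follows essentially the same approach as the paper: Coarea to pass from bulk $L^1$-convergence to $L^1(0,r_0)$-convergence of the spherical integrals, extraction of an a.e.\ convergent subsequence independent of $r$, and then the same splitting $\int_{S_{n_k,r}}F_{n_k}-\int_{\partial B_r}F=\int_{\partial B_r}\chi_{S_{n_k,r}}(F_{n_k}-F)+\int_{\partial B_r}(\chi_{S_{n_k,r}}-1)F$, handled via the first claim and dominated convergence respectively. You are in fact more explicit than the paper in justifying why $\chi_{S_{n,r}}\to 1$ pointwise on $\partial B_r\setminus\{y_N=0\}$.
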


\begin{proof}
	Let $h_n(r):=\int_{\partial B_r} |F_n-F| \, dS$. Since, by assumption and the Coarea Formula, 
	\[
	\lim_{n \to \infty}\int_{B_{r_0}}|F-F_n| \, dy=\lim_{n \to
		\infty}\int_0^{r_0}h_n(r)dr=0,
	\]
	we have that $h_n \to 0$ in $L^1(0,r_0)$. Hence there exists a
        subsequence $\{h_{n_k}\}_{k\in \mathbb{N}}$ converging to $0$
        a.e. in $(0,r_0)$. Therefore $F_{n_k} \to F$ in
        $L^1(\partial B_r)$ for a.e. $r \in (0,r_0)$.  It follows
        that, for a.e. $r \in (0,r_0)$,
	\[
	\int_{S_{n_k,r}}F_{n_k} \, dS -\int_{\partial B_r} F \,
	dS=\int_{\partial B_r} \chi_{S_{n_k,r}}(F_{n_k}-F)\, dS+
	\int_{\partial B_r} (\chi_{S_{n_k}}-1)F \, dS \to 0
	\]
	as  $k \to \infty$,
	thus yielding the conclusion.
\end{proof}

\begin{proposition}\label{p:der}
	We have that $E\in W_{\rm loc}^{1,1}((0,r_0])$, 
        
	\begin{equation}\label{E as H'}
		E(r)=\frac{1}{r^{N-2}}\int_{\partial B_r}U A \nabla U\cdot \nu \,dS=\frac{r}{2}H'(r) +rH(r)O(1)  \quad \text{as } r \to 0^+
		\end{equation}
	and 
	\begin{equation}\label{E'}
		E'(r)=(2-N)	\frac{1}{r^{N-1}}\int_{B_r \setminus \tilde \Gamma}(A \nabla U \cdot \nabla U-\tilde fU^2) \, dy+\frac{1}{r^{N-2}}\int_{\partial B_r}(A \nabla U \cdot \nabla U-\tilde fU^2) \, dS 
	\end{equation}
	in the sense of distributions and for a.e. $r\in (0,r_0)$.
\end{proposition}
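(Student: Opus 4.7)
The plan is to prove the three assertions separately, in the order (i) absolute continuity and formula \eqref{E'}, (ii) the Pohozaev/Green identity $r^{N-2}E(r)=\int_{\partial B_r}U\,A\nabla U\cdot\nu\,dS$, and (iii) the asymptotic relation with $H'(r)$.

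For (i) I would write $E(r)=r^{-(N-2)}G(r)$ where
\[
G(r):=\int_{B_r\setminus\tilde\Gamma}\bigl(A\nabla U\cdot\nabla U-\tilde f\,U^2\bigr)\,dy.
\]
The integrand belongs to $L^1(B_{r_0})$: the gradient term is in $L^1$ because $\nabla U\in L^2$, and $\tilde f U^2\in L^1$ by Proposition \ref{fond ineq} (Remark \ref{H2 then N/2} covers the case \eqref{h2}). The Coarea Formula then shows $G$ is absolutely continuous on $[0,r_0]$ with $G'(r)=\int_{\partial B_r}(A\nabla U\cdot\nabla U-\tilde f U^2)\,dS$ for a.e. $r$. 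Differentiating the product $r^{-(N-2)}G(r)$ yields \eqref{E'}, and since $r^{-(N-2)}$ and its derivative are bounded on every compact subset of $(0,r_0]$, we obtain $E\in W^{1,1}_{\rm loc}((0,r_0])$.

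For (ii) I would exploit the approximating sequence $\{U_n\}$ from Theorem \ref{aprox theor}, which is $H^2$ on $\Omega_{n,r}$ for every $r\in(0,r_0)$. The classical divergence theorem gives
\[
\int_{\Omega_{n,r}}\bigl(A\nabla U_n\cdot\nabla U_n-f_n U_n^2\bigr)\,dy
=\int_{\partial\Omega_{n,r}}U_n\,A\nabla U_n\cdot\nu\,dS,
\]
and the Neumann condition $A\nabla U_n\cdot\nu=0$ on the smooth part $\Gamma_{n,r}$ leaves only the contribution on $S_{n,r}$. Passing to the limit, the left-hand side converges to $\int_{B_r\setminus\tilde\Gamma}(A\nabla U\cdot\nabla U-\tilde f U^2)\,dy$ thanks to the $H^1$ convergence $U_n\to U$ and \eqref{limitL11}. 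For the right-hand side, Cauchy--Schwarz together with $H^1$ convergence gives $U_n A\nabla U_n\to U A\nabla U$ in $L^1(B_{r_0};\R^N)$; applying Lemma \ref{limitboundary} componentwise I extract a subsequence along which $\int_{S_{n_k,r}}U_{n_k}A\nabla U_{n_k}\cdot\nu\,dS\to\int_{\partial B_r}U A\nabla U\cdot\nu\,dS$ for a.e. $r\in(0,r_0)$. This yields the first equality in \eqref{E as H'}.

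For (iii) I would combine the identity from (ii) with \eqref{H'}. Since $\nu=y/|y|$ on $\partial B_r$ and $A y=\mu\beta$ by definition of $\beta$, we have $A\nu-\mu\nu=\mu(\beta-y)/|y|$, which by \eqref{eq:27} and \eqref{mu-estimates} satisfies $|A\nu-\mu\nu|=O(r)$ on $\partial B_r$. Likewise \eqref{nabla-mu-O} gives $|\nabla\mu\cdot\nu|=O(1)$, so the second term on the right of \eqref{H'} contributes $r^{N-1}H(r)O(1)$. Writing
\[
r^{N-2}E(r)-\tfrac{r^{N-1}}{2}H'(r)
=\int_{\partial B_r}U\,\nabla U\cdot(A\nu-\mu\nu)\,dS
-\tfrac12\int_{\partial B_r}U^2\,\nabla\mu\cdot\nu\,dS,
\]
and estimating the remaining integrals by Cauchy--Schwarz together with \eqref{mu-estimates} to return to the weight $\mu$ in $H(r)$, I obtain the claimed identity $E(r)=\frac{r}{2}H'(r)+rH(r)O(1)$ as $r\to 0^+$.

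The main obstacle is step (ii): because $U$ is only in $H^1(B_{r_0}\setminus\tilde\Gamma)$ and the domain is non-smooth at the edge of $\tilde\Gamma$, a direct integration by parts is not available. The whole approximation machinery of Section \ref{sec:appr-probl} is needed to obtain $H^2$ replacements on smooth domains and, crucially, to ensure that the boundary contributions on $\Gamma_{n,r}$ vanish by the Neumann structure preserved in \eqref{aprox prom}; the passage to the limit on the variable boundary piece $S_{n,r}$ then rests on Lemma \ref{limitboundary}.
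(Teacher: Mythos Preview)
Your steps (i) and (ii) are correct and essentially identical to the paper's argument.

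Step (iii), however, has a genuine gap. After writing
\[
r^{N-2}E(r)-\tfrac{r^{N-1}}{2}H'(r)
=\int_{\partial B_r}U\,\nabla U\cdot(A\nu-\mu\nu)\,dS
-\tfrac12\int_{\partial B_r}U^2\,\nabla\mu\cdot\nu\,dS,
\]
you propose to estimate the first integral by Cauchy--Schwarz. That produces a factor $\bigl(\int_{\partial B_r}|\nabla U|^2\,dS\bigr)^{1/2}$, and nothing in the paper up to this point controls $\int_{\partial B_r}|\nabla U|^2\,dS$ in terms of $H(r)$; only its integral in $r$ is finite (Remark~\ref{rem:gradsense}). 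So the bound $r^{N-1}H(r)O(1)$ does not follow from your estimate.

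The paper avoids this by exploiting the structure of the vector $\zeta:=A\nu-\mu\nu=\mu(\beta-y)/|y|$. Two algebraic facts are crucial: $\zeta(y)\cdot y=0$ for all $y$ (so $\zeta$ is tangential to every sphere $\partial B_r$), and $\zeta\cdot e_N=0$ on $\tilde\Gamma$ (by the block form \eqref{matrix-A} of $A$). Hence one may integrate by parts \emph{on the sphere} $\partial B_r$ (across the cut $\Sigma$ there is no boundary contribution because $\zeta\cdot e_N=0$):
\[
\int_{\partial B_r}U\,\nabla U\cdot\zeta\,dS
=\tfrac12\int_{\partial B_r}\zeta\cdot\nabla(U^2)\,dS
=-\tfrac12\int_{\partial B_r}\mathop{\rm div}(\zeta)\,U^2\,dS.
\]
Since $\mathop{\rm div}(\zeta)=O(1)$ by Proposition~\ref{beta-properties}, this is $O(1)\int_{\partial B_r}U^2\,dS=r^{N-1}H(r)O(1)$, and the claimed relation with $H'(r)$ follows. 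The point is that the tangentiality of $\zeta$ lets you trade $\nabla U$ for $U$ on $\partial B_r$, which your Cauchy--Schwarz route cannot do.
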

\begin{proof}
  The fact that $E\in W^{1,1}_{\rm loc}((0,r_0])$ and \eqref{E'}
  follow from the Coarea Formula and \eqref{Found ineq}.  To prove
  \eqref{E as H'} we consider the sequence $\{U_n\}$
    introduced in Theorem \ref{aprox theor}.
For every $r\in(0,r_0)$ and
  $n \in {\mathbb N}\setminus\{0\}$,
	\[\frac{1}{r^{N-2}}\int_{\Omega_{n,r}}(A\nabla U_n\cdot \nabla
          U_n - f_n U_n^2) \, dy=\frac{1}{r^{N-2}}\int_{S_{n,r}}U_n A
          \nabla U_n\cdot \nu \,dS\] since $U_n$ solve \eqref{aprox
          prom} and $U_n \in H^2(\Omega_{n,r})$ by Theorem \ref{aprox
          theor}.  Thanks to Remark \ref{f-limits},the Dominated
        Convergence Theorem, and Lemma \ref{limitboundary}, we can
        pass to the limit, up to a subsequence, as $n \to \infty$ in
        the above identity for a.e. $r\in(0,r_0)$, thus proving the
        first equality in \eqref{E as H'}.  To prove the second
        equality in \eqref{E as H'} we define
	\[
          \zeta(y):=\frac{\mu(y)(\beta(y)-y)}{|y|}=\frac{A(y)y}{|y|}-\frac{A(y)y
            \cdot y}{|y|^3}y.
        \] 
       Then, since  $\zeta(y)\cdot y=0$ and
       $\zeta\cdot(0,\dots,0,1)=0$ on $\tilde\Gamma$, we have that
		\begin{align*}
                  \int_{\partial B_r}U A \nabla U\cdot \nu
                  \,dS-\int_{\partial B_r}\mu U\pd {U}{\nu} \, dS&=
                  \frac12\int_{\partial B_r}\zeta \cdot \nabla (U ^2)
                  \, dS\\
                  &=-\frac12 \int_{\partial B_r}\mathop{\rm{div}}(\zeta)U^2  \, dS
                  =r^{N-1}H(r)O(1)
                \end{align*}
as $r\to0$,	where we have used in the last equality the estimate
\[
  \mathop{\rm{div}}(\zeta)(y)=\left(\frac{\nabla
      \mu(y)}{|y|}-\frac{\mu(y)y}{|y|^3}\right)(\beta(y)-y)
  +\frac{\mu(y)}{|y|}\big(\mathop{\rm{div}}(\beta)(y)-N\big)=O(1)
\]
	which follows from Proposition \ref{beta-properties}. Then we conclude by \eqref{H'}.
	\end{proof}

The approximation procedure developed above also allows us to derive
the following integration by parts formula.
\begin{proposition}\label{p:intparts}
There exists a set $\mathcal M\subset [0,r_0]$ having null $1$-dimensional Lebesgue
  measure such that,  for all $r\in (0,r_0]\setminus\mathcal M$,  $A\nabla
    U\cdot\nu\in L^2(\partial B_r)$ and
  \begin{equation*}
    \int_{B_{r} \setminus \tilde\Gamma} A\nabla U\cdot\nabla \phi\,dx=
    \int_{B_{r}}\tilde f U\phi\, dx+\int_{\partial B_r} ( A\nabla
    U\cdot\nu)  \phi \, dS
  \end{equation*}
 for every $\phi\in H^1(B_{r_0}\setminus \tilde
  \Gamma)$, where $A\nabla
    U\cdot\nu$ on $\partial B_r$ is meant in the sense of Remark
    \ref{rem:gradsense}.
  \end{proposition}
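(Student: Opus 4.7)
The plan is to test the weak formulation of \eqref{straightenedproblem} with a radial Lipschitz cutoff of $\phi$ and recover the boundary integral via Lebesgue differentiation. The main technical subtlety is extracting a single null set $\mathcal M$ that works for every test function $\phi$ simultaneously; this forces a density argument based on the separability of $H^1(B_{r_0}\setminus\tilde\Gamma)$.

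I would define $\mathcal M$ as the union of two null sets. First, the Coarea Formula applied to $|\nabla U|^2\in L^1(B_{r_0})$ shows that $\nabla U\in L^2(\partial B_r,\R^N)$, hence $A\nabla U\cdot\nu\in L^2(\partial B_r)$, for all $r$ outside a null set. Second, fix a countable dense family $\{\phi_k\}_{k\in\mathbb N}\subset H^1(B_{r_0}\setminus\tilde\Gamma)$ and include in $\mathcal M$ the non-Lebesgue points of each $L^1(0,r_0)$-function $s\mapsto\int_{\partial B_s}(A\nabla U\cdot\nu)\phi_k\,dS$.

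For $r\in(0,r_0]\setminus\mathcal M$, $0<\delta<r_0-r$, and $\phi_k$ in the dense family, consider the Lipschitz radial cutoff
\[
\eta_\delta(y):=\min\{1,\max\{0,(r+\delta-|y|)/\delta\}\}.
\]
The product $\phi_k\eta_\delta$, extended by zero, belongs to $H^1_{0,\partial B_{r_1}}(B_{r_1}\setminus\tilde\Gamma)$; plugging it into the weak formulation of \eqref{straightenedproblem} and rewriting the piece involving $\nabla\eta_\delta$ via the Coarea Formula yields
\[
\int_{B_{r_0}\setminus\tilde\Gamma}(A\nabla U\cdot\nabla\phi_k)\,\eta_\delta\,dy-\frac{1}{\delta}\int_r^{r+\delta}\!\int_{\partial B_s}(A\nabla U\cdot\nu)\phi_k\,dS\,ds=\int_{B_{r_0}}\tilde f\,U\phi_k\eta_\delta\,dy.
\]
Letting $\delta\to 0^+$, the two volume integrals converge by dominated convergence to their restrictions over $B_r\setminus\tilde\Gamma$ and $B_r$, and the middle term converges to $-\int_{\partial B_r}(A\nabla U\cdot\nu)\phi_k\,dS$ by Lebesgue differentiation, thanks to $r\notin\mathcal M$. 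This establishes the identity for every $\phi_k$.

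For arbitrary $\phi\in H^1(B_{r_0}\setminus\tilde\Gamma)$ I would approximate $\phi$ by $\phi_k$ in the $H^1$-norm and pass to the limit in each term. Continuity of the two volume integrals in $\phi$ is immediate; continuity of $\phi\mapsto\int_{\partial B_r}(A\nabla U\cdot\nu)\phi\,dS$ uses $A\nabla U\cdot\nu\in L^2(\partial B_r)$ together with the boundedness of the trace $H^1(B_{r_0}\setminus\tilde\Gamma)\hookrightarrow H^1(B_r\setminus\tilde\Gamma)\to L^2(\partial B_r)$ provided by Proposition~\ref{trace r}. The hardest step is conceptually the uniform choice of $\mathcal M$, which is the reason why the density argument must be built in from the start rather than handled after the fact for each fixed $\phi$.
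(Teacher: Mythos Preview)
Your argument is correct, but the route is genuinely different from the paper's. The paper exploits the approximating sequence $\{U_n\}$ built in Theorem~\ref{aprox theor}: since each $U_n\in H^2(\Omega_{n,r})$, classical integration by parts on the smooth domain $\Omega_{n,r}$ gives
\[
\int_{\Omega_{n,r}}(A\nabla U_n\cdot\nabla\phi-f_nU_n\phi)\,dy=\int_{S_{n,r}}\phi\,A\nabla U_n\cdot\nu\,dS,
\]
and one passes to the limit along a subsequence using Lemma~\ref{limitboundary}, which yields $A\nabla U_{n_k}\cdot\nu\to A\nabla U\cdot\nu$ in $L^2(\partial B_r)$ for a.e.\ $r$. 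No density argument in $\phi$ is needed there, because the subsequence and the null set come from the approximation of $U$, not of the test function.

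Your approach sidesteps the approximation machinery entirely, working directly with $U$ via a radial Lipschitz cutoff and Lebesgue differentiation; the cost is the separability/density step needed to pin down a single null set $\mathcal M$ valid for every $\phi$. This is more self-contained and would apply whenever one has $\nabla U\in L^2$ and the weak formulation, without any smooth approximating domains. The paper's version is natural in context, since the $U_n$ are already available and are indispensable anyway for the Pohozaev inequality; your version is the more portable one. One cosmetic point: your cutoff requires $r+\delta<r_0$, so $r=r_0$ is not covered directly, but this is harmless since a single point can be absorbed into $\mathcal M$.
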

  \begin{proof}
    Since $U_n\to U$ in $H^1(B_{r_0}\setminus\tilde\Gamma)$ in view of
    Theorem \ref{aprox theor}, by Lemma
    \ref{limitboundary} there exist a subsequence $\{U_{n_k}\}$ and
    a set $\mathcal M\subset [0,r_0]$
    having null $1$-dimensional Lebesgue measure such that
    $A\nabla
    U\cdot\nu\in L^2(\partial B_r)$ and 
    $A\nabla U_{n_k}\cdot\nu\to A\nabla U\cdot\nu$   in $L^2(\partial
    B_r)$ for all $r\in (0,r_0]\setminus\mathcal M$.
Since $U_n \in H^2(\Omega_{n,r})$ for any $r \in(0,r_0)$ and $n \in {\mathbb N}\setminus\{0\}$ by Theorem
\ref{aprox theor}, from \eqref{aprox prom} it follows that 
\[
  \int_{\Omega_{n,r}}(A\nabla U_n\cdot \nabla
          \phi - f_n U_n\phi) \, dy=\int_{S_{n,r}}\phi A
          \nabla U_n\cdot \nu \,dS.
        \]
        Arguing as in the proof of Proposition \ref{p:der}, we can pass
        to the limit along $n=n_k$ as $k\to\infty$ in the above identity for all
$r\in  (0,r_0]\setminus\mathcal M$, thus obtaining the conclusion.    
  \end{proof}

  \begin{theorem}(Pohozaev type inequality) 
	Under either  assumption \eqref{h1} or  assumption \eqref{h2}, for any
	$r \in (0,r_0]$ we have that 
	\begin{align}\label{Poho-ineq-1}
		&r\int_{\partial B_r} A\nabla U\cdot \nabla U\, dS \ge 
		2r\int_{\partial B_r}\frac{|A\nabla U\cdot \nu|^2}{\mu}\,dS+\int_{B_r\setminus \tilde \Gamma}(A\nabla U\cdot\nabla U) \mathop{\rm{div}}(\beta) \,dy \\
		&\quad+2\int_{B_r\setminus \tilde \Gamma}\frac{A\nabla U \cdot y  }{\mu} \tilde{f}\, U \,dy \notag
		+\int_{B_r\setminus \tilde \Gamma}(dA\nabla U	\nabla U)\cdot \beta \, dy
		-2\int_{B_r\setminus \tilde \Gamma} J_{\beta}(A\nabla U) \cdot \nabla U \, dy,
	\end{align}	
	which can be rewritten as 
	\begin{align}\label{Poho-ineq-2}
          &r\int_{\partial B_r} (A\nabla U\cdot \nabla U - \tilde f\,U^2)\, dS \ge 
            2r\int_{\partial B_r}\frac{|A\nabla U\cdot
            \nu|^2}{\mu}\,dS \\
          \notag 
          &\quad\quad +\int_{B_r\setminus \tilde \Gamma}(A\nabla
            U\cdot\nabla U)
            \mathop{\rm{div}}(\beta) \,dy +\int_{B_r\setminus \tilde
            \Gamma}
            (\tilde f \mathop{\rm{div}}(\beta)+\nabla \tilde f \cdot\beta )\,U^2\, dy\\ \notag
          &\quad\quad+\int_{B_r\setminus \tilde \Gamma}(dA\nabla U	\nabla U)\cdot \beta \, dy \notag 
            -2\int_{B_r\setminus \tilde \Gamma} J_{\beta}(A\nabla U) \cdot \nabla U \, dy
	\end{align}
	if $f$ satisfies  \eqref{h1}. 
\end{theorem}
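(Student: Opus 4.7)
The plan is to derive the stated inequality by applying a classical Rellich--Ne\v cas Pohozaev-type identity to the regularized solutions $U_n$ on the smooth domains $\Omega_{n,r}$ constructed in Section \ref{sec:appr-probl}, and then passing to the limit $n\to\infty$. Since Theorem \ref{aprox theor} guarantees $U_n\in H^2(\Omega_{n,r})$, all integration-by-parts manipulations are justified at the approximating level and yield genuine equalities; the inequality in \eqref{Poho-ineq-1} will then arise from dropping a boundary term on $\Gamma_{n,r}$ with favourable sign.

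Concretely, I would multiply $-\mathop{\rm div}(A\nabla U_n)=f_nU_n$ by the multiplier $\beta\cdot\nabla U_n$ and integrate over $\Omega_{n,r}$. Using the pointwise identity
\[
A\nabla U_n\cdot\nabla(\beta\cdot\nabla U_n)=J_\beta(A\nabla U_n)\cdot\nabla U_n+\tfrac12\beta\cdot\nabla(A\nabla U_n\cdot\nabla U_n)-\tfrac12(dA\nabla U_n\,\nabla U_n)\cdot\beta,
\]
which follows from the product rule and the symmetry of $A$, together with a further integration by parts on the middle term, I obtain a Pohozaev-type equality whose interior contributions already match those appearing in \eqref{Poho-ineq-1}. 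On $\Gamma_{n,r}$, the Neumann condition in \eqref{aprox prom} eliminates $\int(A\nabla U_n\cdot\nu)(\beta\cdot\nabla U_n)\,dS$, so only $\tfrac12\int_{\Gamma_{n,r}}(A\nabla U_n\cdot\nabla U_n)(\beta\cdot\nu)\,dS$ remains; since $\beta\cdot\nu=(A(y)y\cdot\nu(y))/\mu(y)$ with $\mu>0$ by \eqref{mu-estimates} and $A(y)y\cdot\nu(y)\le0$ on $\Gamma_{n,r}$ by \eqref{eq:25}, while $A\nabla U_n\cdot\nabla U_n\ge0$ by \eqref{ellipticity}, this term is non-positive and can be dropped, producing the required direction of the inequality. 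On the spherical part $S_{n,r}\subset\partial B_r$ I would use $\nu=y/r$, the symmetry of $A$, and $A(y)y\cdot y=\mu(y)|y|^2$ to obtain $\beta\cdot\nu=r$ and $\beta\cdot\nabla U_n=(r/\mu)(A\nabla U_n\cdot\nu)$, which produce exactly the two spherical terms of \eqref{Poho-ineq-1}; the interior term $f_nU_n(\beta\cdot\nabla U_n)$ is rewritten as $f_nU_n(A\nabla U_n\cdot y)/\mu$ via $\beta=Ay/\mu$.

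To conclude \eqref{Poho-ineq-1} I would pass to the limit $n\to\infty$: the convergence $U_n\to U$ in $H^1(B_{r_0}\setminus\tilde\Gamma)$ from Theorem \ref{aprox theor}, combined with $f_n\to\tilde f$ in $L^{\frac{N}{2}+\epsilon}$ from \eqref{eq:3}, H\"older's inequality, and Proposition \ref{sobolev ineq}, handles all the volume integrals, while Lemma \ref{limitboundary} takes care of the spherical boundary integrals along a suitable subsequence and for a.e.\ $r\in(0,r_0)$. The second form \eqref{Poho-ineq-2} under assumption \eqref{h1} then follows by rewriting $2\int_{B_r\setminus\tilde\Gamma}\tfrac{A\nabla U\cdot y}{\mu}\tilde f U\,dy=\int_{B_r\setminus\tilde\Gamma}\tilde f\,\beta\cdot\nabla(U^2)\,dy$ and integrating by parts, observing that $\beta_N=0$ on $\tilde\Gamma$ by the block structure \eqref{matrix-A} of $A$ so the $\tilde\Gamma$ contribution vanishes, leaving only a volume term involving $(\tilde f\mathop{\rm div}\beta+\nabla\tilde f\cdot\beta)U^2$ and a spherical term $r\int_{\partial B_r}\tilde f U^2\,dS$ that transfers into the left-hand side. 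The principal obstacle is securing the correct sign of the $\Gamma_{n,r}$ boundary term, which is precisely why the quartic profile $g_n(t)=nt^4$ was engineered in Section \ref{sec:appr-probl} (it is what yields $A(y)y\cdot\nu(y)\le0$ via Proposition \ref{negativity}) and which distinguishes this Neumann argument from its Dirichlet counterpart in \cite{de2021unique}; a secondary technical point is the simultaneous passage to the limit across all boundary and volume integrals along a single subsequence, which is handled by a standard diagonal extraction built into Lemma \ref{limitboundary}.
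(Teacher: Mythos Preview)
Your derivation of \eqref{Poho-ineq-1} is essentially the paper's: the Rellich--Ne\v cas identity for $U_n$ on $\Omega_{n,r}$, the sign of the $\Gamma_{n,r}$ term via \eqref{eq:25} and \eqref{ellipticity}, and passage to the limit through Lemma~\ref{limitboundary} are exactly what is done there. One small point: in handling the volume term $\int_{\Omega_{n,r}}\frac{A\nabla U_n\cdot y}{\mu}f_nU_n\,dy$ under assumption \eqref{h2}, H\"older together with $f_n\in L^{N/2+\epsilon}$ and Proposition~\ref{sobolev ineq} is not quite enough (the exponents do not close); the paper instead uses $|f_n y|\le \co\,|y|^{-1+2\epsilon}$ and the Hardy inequality (Proposition~\ref{hardy}) to get $f_n y U_n\to \tilde f y U$ in $L^2$, which then pairs with $\nabla U_n\to\nabla U$ in $L^2$. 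This is easy to insert.

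For \eqref{Poho-ineq-2} your route diverges from the paper's. You propose to integrate by parts \emph{after} the limit on $B_r\setminus\tilde\Gamma$, invoking $\beta_N=0$ on $\{y_N=0\}$ (a correct consequence of \eqref{matrix-A}) so that no $\tilde\Gamma$ contribution appears. The paper instead performs the integration by parts at the approximating level on $\Omega_{n,r}$ (identity \eqref{eq:6}), which produces an extra boundary term $\int_{\Gamma_{n,r}}\tilde f U_n^2\,\beta\cdot\nu\,dS$; since $\beta\cdot\nu$ does \emph{not} vanish on $\Gamma_{n,r}$, the paper must show this term tends to $0$ as $n\to\infty$, which it does via a separate divergence-theorem argument on the sliver $O_{n,r}^\pm$ and equi-integrability. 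Your approach is conceptually cleaner and exploits the geometric fact $\beta\cdot e_N\big|_{\{y_N=0\}}=0$ that the paper's detour sidesteps; it does, however, require justifying the divergence theorem for $\tilde f U^2\beta$ on $B_r^\pm$ with $U$ only in $H^1$. Under \eqref{h1} this is fine: $\tilde f\in W^{1,N/2+\epsilon}\hookrightarrow L^{N+2\epsilon}$, and one checks $\tilde f U^2\beta\in W^{1,1}(B_r^\pm)$ via the same H\"older combinations the paper uses elsewhere, so traces exist and the flat-boundary contribution vanishes as you say. Either path leads to \eqref{Poho-ineq-2}; yours trades the paper's limiting argument on $\Gamma_{n,r}$ for a regularity check on $\tilde f U^2\beta$.
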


\begin{proof}
  By Theorem \ref{aprox theor} we have that
  $U_n \in H^2(\Omega_{n,r})$ for any $r \in (0,r_0)$ and
  $n    \in {\mathbb N}\setminus\{0\}$. Then, since $A$ is symmetric by Proposition
  \ref{diffeomorphism}, we may write the following Rellich-Ne\u cas
  identity in a distributional sense in $\Omega_{n,r}$:
\begin{multline}\label{Rellich-Necas}
\mathop{\rm{div}}((A\nabla U_n\cdot \nabla U_n) \beta-2(\beta \cdot \nabla U_n) A\nabla U_n)
=(A\nabla {U_n}\cdot\nabla U_n) \mathop{\rm{div}}(\beta)\\
-2(\beta \cdot \nabla U_n)\mathop{\rm{div}}(A\nabla U_n)+(dA\nabla U_n	\nabla U_n)\cdot \beta -2J_{\beta}(A\nabla U_n) \cdot \nabla U_n.
\end{multline}
Since $U_n \in H^2(\Omega_{n,r})$ and the components of $A$ and
$\beta$ are Lipschitz {continuous by Propositions
\ref{diffeomorphism} and \ref{beta-properties}, then
$(A\nabla U_n\nabla U_n) \beta-2(\beta \cdot \nabla U_n) A\nabla U_n)
\in W^{1,1}(\Omega_{n,r})$. Therefore we can integrate
both sides of \eqref{Rellich-Necas}} on the
Lipschitz domain $\Omega_{n,r}$ and apply the Divergence
  Theorem to obtain,
in view of \eqref{mu-beta} and \eqref{aprox prom},
 \begin{align}\label{eq:1}
r\int_{S_{n,r}} &\left(A\nabla U_n\cdot \nabla U_n -2\frac{|A\nabla U_n\cdot \nu|^2}{\mu}\right)\,dS +\int_{ \Gamma_{n,r}} (A\nabla U_n\cdot \nabla U_n) \frac{Ay\cdot \nu }{\mu} \,dS\\
&=\int_{\Omega_{n,r}}(A\nabla {U_n}\cdot\nabla U_n) \mathop{\rm{div}}(\beta) \,dy +\notag
2\int_{\Omega_{n,r}}\frac{A\nabla U_n \cdot y  }{\mu} f_n U_n \,dy\\
&\quad\quad+\int_{\Omega_{n,r}}(dA\nabla U_n	\nabla U_n)\cdot \beta \, dy \notag
-2\int_{\Omega_{n,r}} J_{\beta}(A\nabla U_n) \cdot \nabla U_n \, dy.
\end{align}	
From Proposition \ref{negativity},
\eqref{ellipticity}, and \eqref{mu-estimates} it follows
  that, for all $n\in{\mathbb N}\setminus\{0\}$ and $r\in (0,r_0)$,
\begin{equation}\label{eq:29}
  \int_{ \Gamma_{n,r}} (A\nabla U_n\cdot \nabla U_n) \frac{Ay\cdot \nu
  }{\mu} \,dS \le 0.
\end{equation}
 We recall from  Theorem \ref{aprox theor} that $U_n \to
U$ strongly in $H^1(B_{r_0} \setminus \tilde
\Gamma)$, while Propositions \ref{diffeomorphism} and
\ref{beta-properties} imply that 
\begin{align} \label{everything-bounded} &\mu \in
  L^{\infty}(B_{r_0},\R), \quad \beta\in L^{\infty}(B_{r_0},\R^N),
                                           \quad \mathop{\rm{div}}{\beta}\in L^{\infty}(B_{r_0},\R),\\
                                         & A \in
                                           L^{\infty}(B_{r_0},\R^{N^2}),\quad
                                           \left\{\pd{a_{i,j}}{y_h}\right\}_{i,j,h=1,\dots,
                                           N}\in
                                           L^{\infty}(B_{r_0},\R^{N^3}).\notag
\end{align}
Furthermore, under assumption \eqref{h1}, we have that, by
  Sobolev embeddings (see Proposition \ref{sobolev ineq}), if $N\geq3$,
  then $f_n=\tilde f\in L^N(B_{r_0})$ and $U_n \to
U$ strongly in $L^{2^*}(B_{r_0})$, whereas, if $N=2$,
  then $f_n=\tilde f\in L^{2(1+\epsilon)/(1-\epsilon)}(B_{r_0})$ and $U_n \to
U$ strongly in $L^{(1+\epsilon)/\epsilon}(B_{r_0})$; then, since $\nabla U_n\to\nabla U$ in
$L^{2}(B_{r_0})$, H\"older's inequality ensures that
\begin{equation}\label{eq:28}
  f_n U_n A\nabla U_n\cdot y\to  \tilde f U A\nabla U\cdot y \quad\text{in } L^1(B_{r_0}).
\end{equation}
Under assumption \eqref{h2}, we have that Hardy's inequality (see
Proposition \ref{hardy}), Proposition \ref{trace r}  and \eqref{eq:5}  yield that
\[
  \int_{B_{r_0}}\left| f_n
    y(U_n-U)\right|^2\,dy\leq {\rm const\,}r_0^{4\epsilon} \int_{B_{r_0}}|y|^{-2}|U_n-U|^2\,dy
    \to 0\quad\text{as }n\to\infty
    \]
which, thanks to Proposition \ref{hardy} again and  the Dominated Convergence
Theorem, easily implies that 
\[
  f_n     yU_n\to \tilde fyU\quad\text{in }L^2(B_{r_0}),
\]
thus proving \eqref{eq:28} also under assumption \eqref{h2}.

Then, thanks to the Dominated Convergence
Theorem, \eqref{dA}, \eqref{eq:28} and Lemma
\ref{limitboundary}, we can pass to the limit in \eqref{eq:1} as $n \to\infty$, up to
a subsequence, and, taking into account \eqref{eq:29}, we obtain inequality
\eqref{Poho-ineq-1}.

If assumption \eqref{h1} holds then by \eqref{mu-beta},
  \eqref{fn1} and Proposition \ref{beta-properties}
  we have that
\begin{align}\label{eq:6}
  &
2\int_{\Omega_{n,r}}\frac{A\nabla U_n \cdot y}{\mu} f_n U_n \,dy=
  2\int_{\Omega_{n,r}}(\beta \cdot \nabla U_n)\tilde f U_n \,dy\\\quad&=
\notag  -
  \int_{\Omega_{n,r}}  (\tilde f \mathop{\rm{div}}(\beta)+\nabla \tilde f \cdot\beta )\,U_n^2\, dy+
 r\int_{S_{n,r}} \tilde f \,U_n^2\, dS +\int_{ \Gamma_{n,r}}\tilde f \, U_n^2\, \beta \cdot \nu \, dS.
\end{align}
We define
\begin{align*}
	&O^+_{n,r}:= O_n \cap B^+_r, \quad O^-_{n,r}:= O_n \cap B_r^-,\\
	& \Gamma^+_{n,r}:=\Gamma_{n,r}\cap B^+_r, \quad
	 \Gamma^-_{n,r}:= \Gamma_{n,r}\cap B^-_r,
\end{align*}
where $O_n$
  is defined in \eqref{eq:30}. Taking into account that
$\beta\cdot \nu =\frac{Ay }{\mu}\cdot \nu=0$ on
$\partial O^+_{n,r}\cap \partial \R^N_+$ since
$\nu=-(0,\dots,1)$ and \eqref{matrix-A} holds, the Divergence Theorem
yields that
\begin{multline}\label{eq:2}
  \int_{\Gamma^+_{n,r}}\tilde f U_n^2 \,\beta \cdot \nu \, dS
  =-r\int_{\partial O^+_{n,r}\cap \partial B_r}\tilde f U_n^2 \,\beta
  \cdot \nu \, dS\\+ \int_{O^+_{n,r}}\left(\tilde f
    U_n^2\mathop{\rm{div}{\beta}}+ U_n^2\,\nabla \tilde f \cdot \beta
    \,+2\tilde f U_n\nabla U_n \cdot \beta \right)\, dy.
\end{multline}
By \eqref{limitL11}, \eqref{everything-bounded}, and  Lemma
\ref{limitboundary} there exists a subsequence $\{\tilde f
\,U^2_{n_k}\beta \cdot \nu \}_{k \in \mathbb{N}}$ converging in
$L^1(\partial B_r)$ and hence equi-integrable in $\partial B_r$ for a.e. $r \in (0,r_0)$, hence
\[
  \lim_{k \to \infty}\int_{\partial O^+_{n_k,r}\cap \partial B_r}\tilde f\,U_{n_k}^2\beta \cdot\nu\, dS=0 \quad \text{for a.e. } r \in (0,r_0).\]  
Since $\nabla U_n \to \nabla U$ in $L^2(B_{r_0}^+, \R^N)$, $U_n \to U $ in
$L^{q_{\epsilon}}(B_{r_0}^+)$ and $\tilde f\in L^{N+2 \epsilon}(B_{r_0}^+)$ by \eqref{h1} and classical Sobolev
embeddings, from  \eqref{everything-bounded}  and H\"older's inequality we deduce that 
\[
  \tilde f U_n\nabla U_n \cdot \beta  \to  \tilde f U\nabla U \cdot \beta \quad\text{in }
L^1(B_{r_0}^+),
\]
so that  $\{\tilde f U_n\nabla U_n \cdot \beta\}_{n \in \mathbb{N}}$
is equi-integrable in $B_{r_0}^+$.  Therefore 
\[
  \lim_{n \to \infty}\int_{O_{n,r}^+}\tilde f U_n\nabla U_n \cdot
  \beta \, dy=0\quad\text{for all }r\in(0,r_0).
\]
Moreover, also
$\{\mathop{\rm{div}{\beta}}\tilde f \,U_n^2+ U_n^2\,\nabla \tilde f
\cdot\beta \}_{n \in \mathbb{N}}$ is equi-integrable thanks to
\eqref{limitL11} and \eqref{limitL12}. It follows that
\[
  \lim_{n \to \infty}\int_{O_{n,r}^+}(\mathop{\rm{div}{\beta}}\tilde
  f U_n^2+ \nabla \tilde f\cdot\beta\,U_n^2)\, dy=0\quad\text{for
  all }r\in(0,r_0).
\]
Then from \eqref{eq:2} we  conclude that 
\[
  \lim_{k \to \infty} \int_{\Gamma_{n_k,r}^+}\tilde f U_{n_k}^2 \beta
  \cdot \nu \, dS=0.
\]
In a similar way we obtain that $\lim_{k \to \infty} \int_{\Gamma_{n_k,r}^-}\tilde f U_{n_k}^2 \beta
  \cdot \nu \, dS=0$ so that
\[
\lim_{k \to \infty} \int_{\Gamma_{n_k,r}}\tilde f U_{n_k}^2 \beta
\cdot \nu \, dS=0.
\]
Therefore \eqref{Poho-ineq-2} follows by passing to the limit in
\eqref{eq:1} and \eqref{eq:6} as $n\to\infty$ along a subsequence,
taking into account Proposition \ref{negativity}, the Dominated
Convergence Theorem, \eqref{dA}, Remark \ref{f-limits} and Lemma
\ref{limitboundary}.
\end{proof}

\begin{proposition} \label{E ineq}
	For a.e. $r\in(0,r_0)$
	\begin{align}\label{E ineq 1}
          E'(r) &\ge 
                  2r^{2-N}\int_{\partial B_r}\frac{|A\nabla U\cdot \nu|^2}{\mu}\,dS +r^{1-N}\int_{B_r\setminus \tilde \Gamma} (\mathop{\rm{div}}(\beta)+2-N) A\nabla U\cdot\nabla U \,dy\\ \notag 
		&\quad \quad+r^{1-N}\int_{B_r\setminus \tilde \Gamma}  \big(\tilde f (\mathop{\rm{div}}(\beta)+N-2)+\nabla \tilde f \cdot\beta\big)\,U^2\, dy\\ \notag
		&\quad \quad +r^{1-N}\int_{B_r\setminus \tilde \Gamma}(dA\nabla U	\nabla U)\cdot \beta \, dy \notag 
           -2r^{1-N}\int_{B_r\setminus \tilde \Gamma} J_{\beta}(A\nabla U) \cdot \nabla U \, dy,
	\end{align}
	if \eqref{h1} holds, and 
\begin{multline}\label{E ineq 2}
	E'(r) \ge 
	2r^{2-N}\int_{\partial B_r}\frac{|A\nabla U\cdot \nu|^2}{\mu}\,dS -r^{2-N}\int_{\partial B_r}\tilde f U^2 \, dS 	+(N-2)r^{1-N}\int_{B_r}\tilde f U ^2 \, dy\\ 
    +r^{1-N}\int_{B_r\setminus \tilde \Gamma}(A\nabla \bar  u\cdot\nabla U) (\mathop{\rm{div}}(\beta)+2-N) \,dy\
	 +2r^{1-N}\int_{B_r\setminus \tilde \Gamma}\frac{A\nabla U \cdot y  }{\mu} \tilde{f} U \,dy \\ 
+r^{1-N}\int_{B_r\setminus \tilde \Gamma}(dA\nabla U	\nabla U)\cdot \beta \, dy 
	-2r^{1-N}\int_{B_r\setminus \tilde \Gamma} J_{\beta}(A\nabla U) \cdot \nabla U \, dy\end{multline}
	if \eqref{h2} holds.
\end{proposition}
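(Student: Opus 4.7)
The plan is to combine the distributional derivative formula for $E$ given in Proposition \ref{p:der} with the Pohozaev-type inequalities proved just above, namely \eqref{Poho-ineq-1} and \eqref{Poho-ineq-2}. Since all of the difficult analytic work (regularization by the approximating sequence $\{U_n\}$, passage to the limit, handling of the boundary contributions on $\Gamma_{n,r}$) has already been carried out in the Pohozaev theorem and in Proposition \ref{p:der}, what remains is essentially an algebraic rearrangement, performed separately in each of the two cases.

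For assumption \eqref{h1}, I would start from \eqref{E'}, rewritten as
\[
E'(r)=(2-N)r^{1-N}\!\int_{B_r\setminus\tilde\Gamma}(A\nabla U\cdot\nabla U-\tilde f U^2)\,dy+r^{2-N}\!\int_{\partial B_r}(A\nabla U\cdot\nabla U-\tilde f U^2)\,dS,
\]
and then estimate the boundary integral from below by dividing \eqref{Poho-ineq-2} by $r^{N-1}$. Substituting and grouping the volume terms containing $A\nabla U\cdot\nabla U$ yields the combined coefficient $\mathop{\rm div}(\beta)+2-N$; grouping the volume terms containing $\tilde f U^2$ yields $\tilde f(\mathop{\rm div}(\beta)+N-2)+\nabla\tilde f\cdot\beta$. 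This is exactly \eqref{E ineq 1}.

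For assumption \eqref{h2}, the situation is slightly different because we cannot integrate by parts the term $\int_{B_r\setminus\tilde\Gamma}(\beta\cdot\nabla U)\tilde f U\,dy$ (as done in \eqref{eq:6} in the previous proof) without using $\nabla\tilde f$, which is unavailable under \eqref{h2}. Therefore I would only apply the form \eqref{Poho-ineq-1} of the Pohozaev inequality, which bounds $r\int_{\partial B_r}A\nabla U\cdot\nabla U\,dS$ from below. Using \eqref{E'} once more and substituting \eqref{Poho-ineq-1}, the boundary term $r^{2-N}\int_{\partial B_r}\tilde f U^2\,dS$ remains in place (it is now the term with a negative sign in \eqref{E ineq 2}), while the volume term $(N-2)r^{1-N}\int_{B_r}\tilde f U^2\,dy$ comes from the $\tilde f U^2$ contribution of the $(2-N)$-weighted volume integral in \eqref{E'}. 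Once again, combining the two resulting contributions of $A\nabla U\cdot\nabla U$ in the volume gives the coefficient $\mathop{\rm div}(\beta)+2-N$, which produces \eqref{E ineq 2}.

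The only step that requires a small observation rather than pure algebra is that \eqref{E'}, \eqref{Poho-ineq-1} and \eqref{Poho-ineq-2} are all valid only in a distributional or almost-everywhere sense, respectively on sets of full measure. However, since each identity is an equality or inequality between locally integrable functions of $r\in(0,r_0)$ that hold for a.e. $r$, their combination remains valid for a.e. $r\in(0,r_0)$. Apart from this, I do not foresee any genuine obstacle: the heavy lifting lies entirely in the Pohozaev inequality, and Proposition \ref{E ineq} is its straightforward consequence after bookkeeping of the exponents of $r$ and collecting like terms.
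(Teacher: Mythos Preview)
Your proposal is correct and follows exactly the same approach as the paper, which simply states that \eqref{E ineq 1}--\eqref{E ineq 2} are direct consequences of \eqref{E'}, \eqref{Poho-ineq-1}, and \eqref{Poho-ineq-2}. Your account is in fact more explicit than the paper's one-line proof, correctly identifying which Pohozaev form to use under each hypothesis and carrying out the bookkeeping of the $r$-powers and the combination of the $A\nabla U\cdot\nabla U$ and $\tilde f U^2$ volume terms.
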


\begin{proof}
	Estimates \eqref{E ineq 1}--\eqref{E ineq 2} are direct
        consequences of  \eqref{E'}, \eqref{Poho-ineq-1}, and \eqref{Poho-ineq-2}.
\end{proof}

We now introduce the Almgren frequency function, defined as  
\begin{equation}\label{eq:43}
\mathcal{N}:(0,r_0] \rightarrow \mathbb{R}, \quad
\mathcal{N}(r):=\frac{E(r)}{H(r)}.
\end{equation}
The above definition of $\mathcal{N}$ is well posed thanks to  Proposition \ref{H positive}. 

\begin{proposition} \label{N'} If either assumption \eqref{h1} or
	assumption \eqref{h2} hold, then
	$\mathcal{N} \in W^{1,1}_{\rm loc}((0,r_0])$ and, for any $r \in (0,r_0]$,
	\begin{equation}\label{N lower bound}
		\mathcal{N}(r)\ge -2\eta_{\tilde f}(r). 
	\end{equation}
	Furthermore, for a.e. $r \in (0,r_0)$,
        \begin{equation}\label{N' lower bound}
		\mathcal{N}'(r)\ge \mathcal V(r) + \mathcal W(r)
	\end{equation}
	where 
	\begin{equation}\label{v}
          \mathcal V(r)=
          \frac{2r\left(\Big(\int_{\partial B_r}\frac{|A\nabla U\cdot
                \nu|^2}{\mu}\, dS\Big)
              \Big(\int_{\partial B_r}\mu U^2dS\Big)-
              \left(\int_{\partial B_r}U A \nabla U\cdot \nu \,dS\right)^2\right)}{\left(\int_{\partial B_r}\mu U^2\, dS\right)^2}\geq0
	\end{equation}
	and 
	\begin{equation}\label{w1}
		 \mathcal W(r)=O \left(r^{-1+\frac{4\epsilon}{N+2\epsilon}}\right)(1+\mathcal{N}(r))
                 \quad\text{as $r\to0^+$.}
               \end{equation}
\end{proposition}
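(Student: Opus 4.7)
The plan is to obtain $\mathcal{N}\in W^{1,1}_{\mathrm{loc}}((0,r_0])$ from the regularity already established for $E$ and $H$, to derive the lower bound \eqref{N lower bound} by a direct coercivity estimate, and then to extract the Almgren-type differential inequality \eqref{N' lower bound} from the quotient rule together with Propositions \ref{p:derH}, \ref{p:der} and \ref{E ineq}, reading off $\mathcal V(r)$ as a Cauchy--Schwarz deficit on $\partial B_r$ and absorbing everything else into $\mathcal W(r)$.

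First, observe that $H\in W^{1,1}_{\mathrm{loc}}((0,r_0])$ by Proposition \ref{p:derH} and is positive and continuous on $(0,r_0]$ by Proposition \ref{H positive}, so $1/H$ is locally Lipschitz continuous on the same range, whence $\mathcal{N}=E/H\in W^{1,1}_{\mathrm{loc}}((0,r_0])$ using Proposition \ref{p:der}. For \eqref{N lower bound}, I would apply the coercivity estimate \eqref{nabla ineq} with $h=\tilde f$ and $v=U$: dropping the nonnegative $\int|\nabla U|^2$ term and rearranging gives
\[
\int_{B_r\setminus\tilde\Gamma}(A\nabla U\cdot\nabla U-\tilde f U^2)\,dy\ge-\frac{2\eta_{\tilde f}(r)}{r}\int_{\partial B_r}\mu U^2\,dS,
\]
which, after multiplying by $r^{2-N}$, reads $E(r)\ge-2\eta_{\tilde f}(r)H(r)$ and yields \eqref{N lower bound} by Proposition \ref{H positive}.

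For the derivative estimate, I would compute $\mathcal{N}'=E'/H-EH'/H^2$ and substitute the expressions from Propositions \ref{p:derH} and \ref{p:der} together with the Pohozaev-type inequality from Proposition \ref{E ineq}. The dominant positive contribution is
\[
\frac{2r^{2-N}\int_{\partial B_r}\frac{|A\nabla U\cdot\nu|^2}{\mu}\,dS}{H(r)}-\frac{2r^{2-N}\bigl(\int_{\partial B_r}UA\nabla U\cdot\nu\,dS\bigr)^2}{r^{N-1}H(r)^2\cdot r^{\,?}},
\]
where the second term arises from the main part of $H'$ combined with $E=r^{2-N}\int_{\partial B_r}UA\nabla U\cdot\nu\,dS$ via \eqref{E as H'}; after collecting powers of $r$ and using the identity $\mu=(A\nu\cdot\nu)/1$ on $\partial B_r$ (the Cauchy--Schwarz pairing is between $\sqrt{\mu}\,U$ and $(A\nabla U\cdot\nu)/\sqrt{\mu}$), this combination becomes exactly $\mathcal{V}(r)$, which is nonnegative by the Cauchy--Schwarz inequality in $L^2(\partial B_r)$.

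Finally, all remaining terms must be gathered into $\mathcal W(r)$. The geometric corrections from $E'$ — namely $(\mathop{\rm{div}}(\beta)+2-N)\,A\nabla U\cdot\nabla U$, $(dA\nabla U\,\nabla U)\cdot\beta$ and $J_\beta(A\nabla U)\cdot\nabla U$ — are all $O(|y|)\,|\nabla U|^2$ by Propositions \ref{diffeomorphism} and \ref{beta-properties}, hence contribute $O(r)$ times $\int_{B_r\setminus\tilde\Gamma}|\nabla U|^2\,dy$; rewriting this integral through \eqref{nabla ineq} in terms of $E(r)$ and $H(r)/r$, and dividing by $H(r)$, produces a factor $1+\mathcal N(r)$ together with $O(\eta_{\tilde f}(r))=O(r^{4\epsilon/(N+2\epsilon)})$. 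The potential terms $(\tilde f\,\mathop{\rm{div}}\beta+\nabla\tilde f\cdot\beta)U^2$ under \eqref{h1}, or the boundary and interior potential integrals under \eqref{h2}, are estimated by Propositions \ref{fond ineq} and \ref{hardy} and Remark \ref{H2 then N/2}, yielding the same $r^{4\epsilon/(N+2\epsilon)}$ scaling; the remainder terms $O(1)H(r)$ coming from \eqref{H'} and \eqref{E as H'} give factors $E(r)/H(r)=\mathcal N(r)$ after multiplication with $E$ or $H'$. The division by $r$ in the quotient rule for $\mathcal N'$ (or, equivalently, the factor $r^{-1}$ in front of the boundary integrals) is what produces the exponent $-1+\frac{4\epsilon}{N+2\epsilon}$ in \eqref{w1}. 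The main obstacle will be the careful bookkeeping needed to handle the discrepancy between $A\nabla U\cdot\nu$ and $\mu\,\partial_\nu U$ (which differ by $\mu\nabla U\cdot(\beta-y)/|y|=O(r)\nabla U$ by \eqref{eq:27}), and to verify that each geometric perturbation of the flat-metric identities is controlled uniformly by $1+\mathcal N(r)$ up to the required power of $r$.
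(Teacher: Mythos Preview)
Your proposal is correct and follows essentially the same approach as the paper: the regularity of $\mathcal N$ comes from that of $E$ and $1/H$, the lower bound \eqref{N lower bound} is read off directly from \eqref{nabla ineq}, and the derivative inequality is obtained by combining the quotient rule with \eqref{E as H'} and Proposition \ref{E ineq}, isolating the Cauchy--Schwarz deficit $\mathcal V(r)$ and absorbing all geometric and potential remainders into $\mathcal W(r)$ via Propositions \ref{diffeomorphism}, \ref{beta-properties}, \ref{fond ineq} and \eqref{nabla ineq}. Your concern about the discrepancy between $A\nabla U\cdot\nu$ and $\mu\,\partial_\nu U$ is unnecessary here: the paper simply uses the first identity in \eqref{E as H'}, $E(r)=r^{2-N}\int_{\partial B_r}U\,A\nabla U\cdot\nu\,dS$, to rewrite $EH'/H^2$ as $\frac{2}{r}E^2/H^2+O(1)\mathcal N(r)$, so the $\mathcal V$ term emerges without any further conversion.
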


\begin{proof}
	Since $1/H,E \in W_{\rm loc}^{1,1}((0,r_0])$, then $\mathcal{N} \in W_{\rm loc}^{1,1}((0,r_0])$.
Furthermore \eqref{nabla ineq} directly implies~\eqref{N
    lower bound}.

By\eqref{E as H'}, for a.e.  $r \in (0,r_0)$
\begin{align}
\label{eq:31}  \mathcal{N'}(r)&=\frac{E'(r)H(r)-E(r)H'(r)}{H^2(r)}=\frac{E'(r)H(r)-\frac{2}{r}E^2(r)}{H^2(r)}+\frac{E(r)O(1)}{H(r)}\\
\notag&=\frac{E'(r)H(r)-\frac2r r^{4-2N}\big(\int_{\partial B_r}U A
        \nabla U\cdot \nu \,dS\big)^2}{H^2(r)}+O(1)\mathcal N(r)
\end{align}  
as $r\to0^+$.
	By Proposition \ref{diffeomorphism}, Proposition \ref{beta-properties}, \eqref{def eta} and \eqref{nabla ineq} 
\begin{align*}
	&\left|\int_{B_r\setminus \tilde \Gamma}\Big((A\nabla
   U\cdot\nabla U) (\mathop{\rm{div}}(\beta)+2-N)-2J_{\beta}(A\nabla
   U) \cdot \nabla U +(dA\nabla U	\nabla U)\cdot \beta\Big)\,
   dy\right|\\
                                                         &\quad\le
	 O(r)\int_{B_r\setminus \tilde \Gamma} |\nabla U| ^2 \, dy \\
	 &\quad\le O(r)\int_{B_r\setminus\tilde\Gamma}(A \nabla U \cdot
           \nabla U-\tilde fU^2) \, dy +O\left(r^{\frac{4
           \epsilon}{N+2\epsilon}}\right) \int_{\partial B_r}\mu U^2
           \,dS\quad\text{as }r\to0^+.
\end{align*} 
By  \eqref{Found ineq},  \eqref{nabla ineq},  and \eqref{mu-estimates}
\begin{align*}
\int_{B_r}\tilde f U ^2 \, dy &\le O\left( r^{\frac{4 \epsilon}{N+2	\epsilon}}\right)\int_{B_r\setminus\tilde\Gamma} |\nabla U |^2\, dy +O\left(r^{\frac{2\epsilon -N}{N+2\epsilon}}\right) \int_{\partial B_r} U^2 \,dS\\
&\le  O\left(r^{\frac{4\epsilon}{N+2\epsilon}}\right)\int_{B_r\setminus\tilde\Gamma}(A \nabla U \cdot \nabla U \, dy-\tilde {f}U^2)+O\left(r^{\frac{2\epsilon -N}{N+2\epsilon}}\right) \int_{\partial B_r}\mu U^2 \,dS
\end{align*}
as $r\to0^+$ and, by \eqref{div-beta}, the same holds for   $\int_{B_r}(\mathop{\rm{div}}{\beta}-N+2)\tilde f U ^2 \, dy $.
In the same way from \eqref{eq:27}   it follows that, if \eqref{h1} holds,
\begin{equation*}
	\int_{B_r}\nabla \tilde f \cdot \beta  U ^2 \, dy \le {O\left(r^{\frac{4\epsilon}{N+2\epsilon}}\right)}\int_{B_r\setminus\tilde\Gamma}(A \nabla U \cdot \nabla U \, dy-\tilde {f}U^2)+O\left(r^{\frac{2\epsilon -N}{N+2\epsilon}}\right) \int_{\partial B_r}\mu U^2 \,dS
      \end{equation*}
      as $r\to0^+$.

      Under assumption \eqref{h2}, by Remark \ref{tilde-f-regularity},
      \eqref{mu-O}, \eqref{A-oper-norm}, \eqref{def eta} \eqref{Found ineq}, \eqref{nabla ineq} and H\"older's inequality,
\begin{align*}
  &\int_{B_r\setminus \tilde \Gamma}\frac{A\nabla U \cdot y  }
    {\mu} \tilde{f} U \,dy=O(r)\int_{B_r\setminus \tilde \Gamma}|\nabla U|  |\tilde{f}| U \,dy \notag\\
  &\le O(r^\e)\norm{\nabla U}_{L^2(B_r\setminus \tilde  \Gamma)}
    \left(\int_{B_r} |\tilde f| U^2\, dx\right)^{\frac{1}{2}}\\
  &\le O\left(r^{\e+\frac{2\e}{N+2 \e}}\right)
    \left(\int_{B_r \setminus  \tilde \Gamma}(A \nabla  U\cdot \nabla U -\tilde fU^2)\, dy+ 
    \frac{2}{\eta_f(r)}{r} \int_{\partial B_r}\mu U^2 \, dS\right)^{\frac{1}{2}}\times\\ \notag 
  \notag &\qquad\times\left( \int_{B_r \setminus\tilde  \Gamma}
           (A \nabla  U\cdot \nabla U -\tilde fU^2)\, dy+
           \frac{2}r \int_{\partial B_r}\mu U^2 \, dS \right)^{\frac{1}{2}}\\
  \notag  &\le O\left(r^{\e+\frac{2\e}{N+2 \e}}\right)
            \int_{B_r \setminus \tilde \Gamma}(A \nabla  U\cdot \nabla
            U -\tilde fU^2)\, dy
            +O\left(r^{-1+\e+\frac{2\e}{N+2 \e}}\right)
            \int_{\partial B_r}\mu U^2 \, dS. 
\end{align*}
Under assumptions \eqref{h2}, thanks to Remark
\ref{tilde-f-regularity} and \eqref{mu-estimates},
\begin{equation*}
\int_{\partial B_r}\tilde f U^2 \, dS=O\left(r^{2\e-2}\right)\int_{\partial B_r}\mu U^2 \, dS.
\end{equation*}
Collecting the above estimates, we conclude that
\eqref{N' lower bound}, \eqref{v} and \eqref{w1} follow from \eqref{E
  ineq 1}or \eqref{E ineq 2} under hypotheses \eqref{h1} or \eqref{h2}
respectively.  From the Cauchy–Schwarz inequality we also deduce that
$\mathcal V\geq 0$ a.e. in $(0,r_0)$.
\end{proof}	

We now prove that $\mathcal{N}$ is bounded.
\begin{proposition}
	There exists  a constant $C > 0$ such that, for  every $r \in (0,r_0]$,  
	\begin{equation}\label{N upper bound}
		\mathcal{N}(r) \le C.
	\end{equation}
\end{proposition}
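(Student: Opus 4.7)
The plan is to derive a Gronwall-type differential inequality for $\mathcal{N}$ from the lower estimate \eqref{N' lower bound} on $\mathcal{N}'$, and then integrate it.

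First, since $\mathcal V(r)\ge 0$ by \eqref{v}, inequality \eqref{N' lower bound} and the asymptotic expansion \eqref{w1} for $\mathcal{W}$ imply that there exist $C_1>0$ and $\delta:=\frac{4\epsilon}{N+2\epsilon}>0$ such that, for a.e. $r\in(0,r_0)$,
\begin{equation*}
  \mathcal{N}'(r)\ge \mathcal W(r)\ge -C_1\,r^{-1+\delta}\bigl(1+\mathcal{N}(r)\bigr).
\end{equation*}
Next I would show that $1+\mathcal{N}(r)>0$ throughout $(0,r_0]$: by \eqref{N lower bound}, $1+\mathcal{N}(r)\ge 1-2\eta_{\tilde f}(r)$, and since $\eta_{\tilde f}$ is monotone non-decreasing in $r$ we have $\eta_{\tilde f}(r)\le \eta_{\tilde f}(r_0)<\tfrac12$ by \eqref{beta1}, so $1+\mathcal{N}(r)>0$ on $(0,r_0]$.

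Set $\Psi(r):=\frac{C_1}{\delta}\bigl(r^{\delta}-r_0^{\delta}\bigr)$, so that $\Psi'(r)=C_1 r^{-1+\delta}$ and $\Psi(r_0)=0$. Since $\mathcal N\in W^{1,1}_{\rm loc}((0,r_0])$ by Proposition \ref{N'}, the product $r\mapsto e^{\Psi(r)}\bigl(1+\mathcal{N}(r)\bigr)$ lies in $W^{1,1}_{\rm loc}((0,r_0])$ and its distributional derivative satisfies, for a.e. $r\in(0,r_0)$,
\begin{equation*}
\bigl(e^{\Psi(r)}(1+\mathcal{N}(r))\bigr)'
=e^{\Psi(r)}\bigl[\mathcal{N}'(r)+C_1 r^{-1+\delta}(1+\mathcal{N}(r))\bigr]\ge 0,
\end{equation*}
so $r\mapsto e^{\Psi(r)}(1+\mathcal{N}(r))$ is non-decreasing on $(0,r_0]$. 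Evaluating at $r$ and $r_0$ yields $e^{\Psi(r)}(1+\mathcal{N}(r))\le 1+\mathcal{N}(r_0)$, i.e.
\begin{equation*}
  1+\mathcal{N}(r)\le e^{-\Psi(r)}\bigl(1+\mathcal{N}(r_0)\bigr)\le e^{\frac{C_1}{\delta}r_0^{\delta}}\bigl(1+\mathcal{N}(r_0)\bigr)
\end{equation*}
for every $r\in(0,r_0]$, which gives \eqref{N upper bound} with $C:=e^{\frac{C_1}{\delta}r_0^{\delta}}\bigl(1+\mathcal{N}(r_0)\bigr)-1$.

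The only point requiring care is the strict positivity of $1+\mathcal{N}(r)$: without it the differential inequality cannot be recast in the multiplicative form needed to apply the Gronwall-type argument. This is precisely why the smallness condition \eqref{beta1}, which forces $\eta_{\tilde f}(r_0)<\tfrac12$, was built into the definition of $r_0$; once that is in hand, the argument is a routine integration of a linear first-order differential inequality.
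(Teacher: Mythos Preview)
Your proof is correct and follows essentially the same approach as the paper: both derive the differential inequality $\mathcal{N}'(r)\ge -\kappa r^{-1+\delta}(1+\mathcal{N}(r))$ from Proposition~\ref{N'}, use \eqref{N lower bound} and \eqref{beta1} to ensure $1+\mathcal{N}(r)>0$, and then integrate via Gronwall. The only cosmetic difference is that the paper divides by $1+\mathcal{N}$ and integrates $(\log(1+\mathcal{N}))'$, whereas you use the equivalent integrating-factor formulation $e^{\Psi(r)}(1+\mathcal{N}(r))$; the resulting bound is identical.
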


\begin{proof}
	By Proposition \ref{N'} there exists a constant $\kappa>0$  such that, for a.e. $r \in  (0,r_0)$, 
	\[
          (\mathcal{N}+1)'(r) \ge \mathcal W(r)\ge -\kappa\,
          r^{-1+\frac{4\epsilon}{N+2\epsilon}}(\mathcal{N}(r)+1).
        \]
        Since
        $\mathcal{N}+1 > 0$ by \eqref{N lower bound} and the choice of
        $r_0$ in \eqref{beta1}, it follows that
	\[
	(\log(\mathcal{N}+1))' \ge -\kappa r^{-1+\frac{4\epsilon}{N+2\epsilon}}.
	\]
	An integration over $(r,r_0)$ yields
	\[\mathcal{N}(r) \le -1+\exp\left(\kappa\frac{N +2
		\epsilon}{4\epsilon}r_0^{\frac{4 \epsilon}{2 \epsilon
			+N}}\right)(\mathcal{N}(r_0)+1)\]
	and the proof is thereby complete.
\end{proof}
  \begin{proposition}\label{existence limit gamma}
	There exists the limit 
	\begin{equation}\label{limit gamma }
		\gamma:=\lim_{r \to 0^+}\mathcal{N}(r).
	\end{equation}
	Furthermore $\gamma$ is finite and $\gamma \ge 0$.
\end{proposition}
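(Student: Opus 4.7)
The plan is to exploit the differential inequality \eqref{N' lower bound} together with the two-sided bounds \eqref{N lower bound} and \eqref{N upper bound} on $\mathcal N$ to produce a monotone perturbation of $\mathcal N$, from which the existence of the limit is immediate.

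First, since $\mathcal N$ is bounded on $(0,r_0]$ by \eqref{N lower bound} and \eqref{N upper bound}, the estimate \eqref{w1} yields a constant $C_1>0$ such that
\begin{equation*}
|\mathcal W(r)|\le C_1\, r^{-1+\frac{4\epsilon}{N+2\epsilon}}\quad\text{for a.e. }r\in(0,r_0).
\end{equation*}
Because $\frac{4\epsilon}{N+2\epsilon}>0$, the function $r\mapsto r^{-1+\frac{4\epsilon}{N+2\epsilon}}$ is integrable near $0$. I would then define
\begin{equation*}
K(r):=C_1\cdot\frac{N+2\epsilon}{4\epsilon}\,r^{\frac{4\epsilon}{N+2\epsilon}},\qquad r\in(0,r_0],
\end{equation*}
so that $K'(r)\ge |\mathcal W(r)|$ a.e. in $(0,r_0)$ and $K(r)\to 0^+$ as $r\to 0^+$.

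Combining this with \eqref{N' lower bound} and the nonnegativity of $\mathcal V$ from \eqref{v}, I obtain
\begin{equation*}
(\mathcal N+K)'(r)\ge \mathcal V(r)+\mathcal W(r)+K'(r)\ge 0\quad\text{for a.e. }r\in(0,r_0).
\end{equation*}
Since $\mathcal N+K\in W^{1,1}_{\mathrm{loc}}((0,r_0])$ by Proposition \ref{N'}, this implies that $\mathcal N+K$ is monotone non-decreasing on $(0,r_0]$. It is also bounded above by \eqref{N upper bound} and the boundedness of $K$, hence the limit $\lim_{r\to 0^+}(\mathcal N(r)+K(r))$ exists in $\mathbb R$. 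Because $K(r)\to 0$ as $r\to 0^+$, the limit $\gamma:=\lim_{r\to 0^+}\mathcal N(r)$ exists and is finite.

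Finally, to see that $\gamma\ge 0$, I pass to the limit in \eqref{N lower bound}: since $\eta_{\tilde f}(r)\to 0$ as $r\to 0^+$ by definition \eqref{def eta} (the $L^{N/2+\epsilon}$-norm being computed on a shrinking ball and $r^{4\epsilon/(N+2\epsilon)}\to 0$), we obtain $\gamma\ge -2\lim_{r\to 0^+}\eta_{\tilde f}(r)=0$. The only mildly delicate point is making sure $\mathcal N+K$ is genuinely the antiderivative of $(\mathcal N+K)'$, which is guaranteed by the $W^{1,1}_{\mathrm{loc}}$-regularity already established, so no further obstacle arises.
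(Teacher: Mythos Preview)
Your proof is correct and essentially identical to the paper's: both add a correction $K(r)=\mathrm{const}\cdot r^{4\epsilon/(N+2\epsilon)}$ to $\mathcal N$ to obtain a non-decreasing function on $(0,r_0]$, and then read off the existence and finiteness of the limit from monotonicity together with the two-sided bounds on $\mathcal N$. The only slip is that for a non-decreasing function on $(0,r_0]$ the finiteness of $\lim_{r\to 0^+}$ comes from the \emph{lower} bound \eqref{N lower bound}, not the upper bound you cite.
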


\begin{proof}
  From Proposition \ref{N'} and \eqref{N upper bound} there exists a
  constant $\kappa>0$ such that
\[
\mathcal{N}'(r)\ge \mathcal W(r) \ge -\kappa \,r^{-1+\frac{4
    \epsilon}{N+2\epsilon}}(\mathcal{N}(r)+1) \ge -\kappa(C+1)
r^{-1+\frac{4
    \epsilon}{N+2\epsilon}} \quad \text{for a.e. } r \in
(0,r_0).
\] 
Then
	\[
	\frac{d}{dr}\left(
	\mathcal N(r)+\frac{\kappa(C+1)(N+2\epsilon)}{4\epsilon}r^{\frac{4\epsilon}{N+2\epsilon}}
	\right)\ge 0
	\]
	for a.e. $r\in(0,r_0)$.
	We conclude that $\lim_{r \to 0^+}\mathcal{N}(r)$ exists; moreover
	such a limit is finite thanks to \eqref{N upper bound} and \eqref{N
		lower bound}. 
	Furthermore from \eqref{def eta} and \eqref{N lower bound} we deduce  that $\gamma \ge 0$.
\end{proof}

\begin{proposition}
	There exists a constant $\alpha > 0$ such that, for  every $r \in (0,r_0]$, 
	\begin{equation}\label{H upper bound}
		H(r) \le \alpha \,r^{2 \gamma}.
	\end{equation}
	Furthermore for  every $\sigma > 0$ there exist $\alpha_\sigma > 0$ and $r_\sigma \in (0,r_0)$ such that, for  every $r\in (0,r_\sigma]$,
	\begin{equation}\label{H lower bound}
		H(r) \ge \alpha_\sigma r^{2 \gamma+\sigma}.
	\end{equation}
\end{proposition}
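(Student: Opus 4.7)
The plan is to translate the information on the frequency function $\mathcal N$ into a differential inequality for $H$ and then integrate. The key identity is \eqref{E as H'}, which rearranges to
\[
\frac{H'(r)}{H(r)}=\frac{2\,\mathcal N(r)}{r}+O(1)\quad\text{as }r\to0^+,
\]
so sharp control of $\mathcal N(r)$ near $0$ yields sharp logarithmic control of $H$.

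First I would prove the upper bound \eqref{H upper bound}. From the proof of Proposition \ref{existence limit gamma}, the function
\[
r\mapsto \mathcal N(r)+\frac{\kappa(C+1)(N+2\epsilon)}{4\epsilon}\,r^{\frac{4\epsilon}{N+2\epsilon}}
\]
is nondecreasing on $(0,r_0)$ and converges to $\gamma$ as $r\to0^+$; hence
\[
\mathcal N(r)\ge \gamma-\co\cdot r^{\frac{4\epsilon}{N+2\epsilon}}\quad\text{for all }r\in(0,r_0].
\]
Plugging this into the identity above gives $\frac{H'(r)}{H(r)}\ge \frac{2\gamma}{r}-\co\,r^{-1+\frac{4\epsilon}{N+2\epsilon}}-\co$, which, after integration from $r$ to $r_0$ (using that $r^{\frac{4\epsilon}{N+2\epsilon}}$ and $r$ are bounded on $(0,r_0)$), yields $\log H(r)\le 2\gamma\log r+\co$, i.e. \eqref{H upper bound} with $\alpha=e^{\co}$.

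For the lower bound \eqref{H lower bound}, I would exploit the opposite direction: since $\lim_{r\to0^+}\mathcal N(r)=\gamma$ by Proposition \ref{existence limit gamma}, for any $\sigma>0$ there exists $r_\sigma\in(0,r_0)$ such that $\mathcal N(r)\le \gamma+\sigma/4$ for every $r\in(0,r_\sigma]$. The identity above then gives
\[
\frac{H'(r)}{H(r)}\le \frac{2\gamma+\sigma/2}{r}+\co,
\]
and, possibly shrinking $r_\sigma$ to absorb the $O(1)$ term into $\sigma/(2r)$, we obtain $\frac{H'(r)}{H(r)}\le \frac{2\gamma+\sigma}{r}$ on $(0,r_\sigma]$. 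Integrating this inequality on $(r,r_\sigma)$ produces $\log H(r)\ge (2\gamma+\sigma)\log r-\co$, hence \eqref{H lower bound} with $\alpha_\sigma=H(r_\sigma)\,r_\sigma^{-2\gamma-\sigma}$.

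The only subtle point is ensuring that the $O(1)$ remainder coming from \eqref{E as H'} does not spoil either bound. For the upper bound this is automatic because integrating an $O(1)$ quantity on $(r,r_0)$ stays bounded. For the lower bound it requires choosing $r_\sigma$ small enough so that the constant $O(1)$ term is dominated by $\sigma/(2r)$; this is the step I expect to be the most delicate, but it is elementary. No other obstacle is foreseen: Proposition \ref{existence limit gamma}, Proposition \ref{N'} and the asymptotic expansion of $E$ in terms of $H'$ supply all necessary ingredients.
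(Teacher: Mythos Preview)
Your proposal is correct and follows essentially the same route as the paper: both proofs rewrite \eqref{E as H'} as $H'/H=2\mathcal N(r)/r+O(1)$, use the lower bound $\mathcal N(r)\ge\gamma-\co\, r^{4\epsilon/(N+2\epsilon)}$ (the paper obtains it by writing $\mathcal N(r)-\gamma=\int_0^r\mathcal N'\ge\int_0^r\mathcal W$, you obtain it from the monotonicity of $\mathcal N(r)+\co\, r^{4\epsilon/(N+2\epsilon)}$, which is the same thing) and integrate on $(r,r_0)$ for \eqref{H upper bound}; for \eqref{H lower bound} both use $\mathcal N(r)\to\gamma$ to get $H'/H\le(2\gamma+\sigma)/r$ on a small interval and integrate on $(r,r_\sigma)$. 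Your explicit handling of the $O(1)$ remainder by shrinking $r_\sigma$ is exactly what the paper does implicitly.
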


\begin{proof}
	By \eqref{E as H'}, \eqref{N' lower bound}, \eqref{v},
        \eqref{w1}, and \eqref{N upper bound}  there exists a constant $\kappa>0$  such
        that, for a.e. $r \in  (0,r_0)$,
	
\begin{align*}
  \frac{H'(r)}{H(r)}&=\frac{2}{r}\mathcal{N}(r){+O(1)}\\
                    &=\frac{2}{r}
                      \int_0^r\mathcal{N}'(t)\, dt+\frac{2\gamma}{r}
                      +O(1)\ge \frac{2}{r} \int_0^r\mathcal W(t) \,dt
                      +O(1)+\frac{2\gamma}{r}\ge
                      -\kappa\, r^{-1+\frac{4\epsilon}{N+2\epsilon}}+\frac{2\gamma}{r}.
\end{align*}
	It follows that, integrating between $r$ and $r_0$, 
	\[H(r_0)\ge H(r)\left(\frac{r_0}{r}\right)^{2\gamma} 
	\exp\left(-\kappa\tfrac{N+2\epsilon}{4\epsilon}r_0^{\frac{4
			\epsilon
		}{N+2\epsilon}}\right)
	\]
	for all $r\in(0,r_0]$, so that \eqref{H upper bound} is
        proved.  To prove\eqref{H lower bound} we notice that, by
        \eqref{limit gamma } and \eqref{E as H'}, for every $\sigma > 0$ there exists
        $r_\sigma \in (0,r_0)$ s.t.
\[
\frac{H'(r)}{H(r)} \le \frac{2\gamma + \sigma}{r}\quad\text{for all }r\in(0,r_\sigma]
\]
	and an integration  between $r \in (0,r_\sigma)$ and $r_\sigma$ yields
	\[H(r_\sigma)\le\Bigl(\frac{r_\sigma}{r}\Bigr)^{2\gamma+\sigma} H(r)
	\]
	thus proving \eqref{H lower bound}.
\end{proof}

\begin{proposition}
	The limit 
	$\lim_{r \to 0^+}r^{-2 \gamma} H(r)$
	exists and is finite.
\end{proposition}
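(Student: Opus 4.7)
The plan is to exploit the identity \eqref{E as H'} together with the one-sided control of $\mathcal{N}(r)-\gamma$ implicit in the proof of Proposition \ref{existence limit gamma} to show that $r\mapsto r^{-2\gamma}H(r)$ differs from a non-decreasing function by an absolutely continuous remainder which vanishes at $0$. The limit then exists by monotonicity, and is finite by \eqref{H upper bound}.

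First, combining \eqref{E as H'} with the definition \eqref{eq:43}, we have $H'(r)/H(r)=2\mathcal{N}(r)/r+O(1)$ as $r\to0^+$, so that
\[
\frac{d}{dr}\bigl(r^{-2\gamma}H(r)\bigr) = r^{-2\gamma}H(r)\left(\frac{2(\mathcal{N}(r)-\gamma)}{r}+O(1)\right)\quad\text{for a.e. } r\in(0,r_0).
\]
The proof of Proposition \ref{existence limit gamma} shows that the map $r\mapsto \mathcal{N}(r)+\frac{\kappa(C+1)(N+2\epsilon)}{4\epsilon}r^{4\epsilon/(N+2\epsilon)}$ is non-decreasing on $(0,r_0]$ with limit $\gamma$ as $r\to0^+$, hence there exists $c>0$ such that
\[
\mathcal{N}(r)-\gamma \ge -c\,r^{4\epsilon/(N+2\epsilon)}\quad\text{for all }r\in(0,r_0].
\]
Combining this with the upper bound $r^{-2\gamma}H(r)\le\alpha$ from \eqref{H upper bound}, and with a constant $M>0$ controlling the $O(1)$ term, we obtain $\frac{d}{dr}\bigl(r^{-2\gamma}H(r)\bigr)\ge -g(r)$ for a.e. $r\in(0,r_0)$, where
\[
g(r):=2c\alpha\,r^{-1+4\epsilon/(N+2\epsilon)}+M\alpha\in L^{1}(0,r_0).
\]

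To conclude, I set $G(r):=\int_0^r g(s)\,ds$, which belongs to $W^{1,1}((0,r_0])$ and tends to $0$ as $r\to 0^+$ by absolute continuity of the Lebesgue integral. The function $r\mapsto r^{-2\gamma}H(r)+G(r)$ then lies in $W^{1,1}_{\rm loc}((0,r_0])$ with a.e. non-negative distributional derivative, so it is non-decreasing on $(0,r_0]$; being also non-negative, it admits a finite limit as $r\to 0^+$. Since $G(r)\to 0$, this forces the existence of $\lim_{r\to 0^+} r^{-2\gamma}H(r)$, and \eqref{H upper bound} guarantees it is bounded above by $\alpha<\infty$.

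The main point to verify is the purely bookkeeping step that assembles the constants coming from \eqref{E as H'}, Proposition \ref{existence limit gamma} and \eqref{H upper bound} into a genuinely $L^{1}$ lower bound for the derivative. No new analytic input is required: only a one-sided control of $\mathcal{N}-\gamma$ is used, and that has already been produced in the course of establishing Proposition \ref{existence limit gamma}.
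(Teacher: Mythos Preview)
Your proof is correct and follows essentially the same strategy as the paper: both compute $\frac{d}{dr}(r^{-2\gamma}H(r))$ via \eqref{E as H'}, isolate the sign-favorable part of $\mathcal N(r)-\gamma$ coming from $\mathcal N'\ge \mathcal W$, and control the remainder by an $L^1(0,r_0)$ function using \eqref{H upper bound}. Your packaging is slightly more economical in that you invoke the lower bound $\mathcal N(r)-\gamma\ge -c\,r^{4\epsilon/(N+2\epsilon)}$ already extracted in Proposition~\ref{existence limit gamma}, whereas the paper re-splits $\mathcal N'=(\mathcal N'-\mathcal W)+\mathcal W$ inside the integral formula \eqref{H limit} and analyzes each piece separately.
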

\begin{proof}
  By \eqref{H upper bound} we only need to prove that the limit
  exists.  For any $r \in (0,r_0)$  we have that
	\[
\frac{d}{dr}\frac{H(r)}{r^{2\gamma}}= \frac{2r^{2 \gamma
    -1}E(r)-2\gamma r^{2\gamma-1}H(r)+r^{2\gamma}H(r)O(1)}{r^{4\gamma}}=2r^{-2 \gamma
  -1}H(r)\left(\int_0^r\mathcal{N}'(t) \, dt+rO(1)\right),
\]
	by \eqref{E as H'} and Proposition \ref{existence limit gamma}. 
	Thanks to Proposition \ref{N'},  integrating between $r$ and $r_0$ we obtain 
	\begin{align}\label{H limit}
          \frac{H(r_0)}{r_0^{2\gamma}}-\frac{H(r)}{r^{2\gamma}}=
&\int_r^{r_0}2s^{-2 \gamma -1}H(s)\left(\int_0^s
  (\mathcal{N}'(t)-\mathcal W(t))\, dt\right) ds\\
          \notag &+\int_r^{r_0}2s^{-2 \gamma -1}H(s)\left(sO(1)+\int_0^s
                   \mathcal W(t)\, dt\right)ds.
	\end{align}
	We note that  there exists a constant $\kappa>0$  such that 
	\[
\left|2s^{-2 \gamma -1}H(s)\left(sO(1)+\int_0^s \mathcal
    W(t)\, dt\right)\right| \le
\kappa\,s^{-1+\frac{4\epsilon}{N+2 \epsilon}} \]
	by Proposition \ref{N'}, \eqref{N upper bound}, and \eqref{H
          upper bound}.
 Since $s^{-1+\frac{4\epsilon}{N+2 \epsilon}}\in
  L^1(0,r_0)$, then 
\begin{equation*}
\lim_{r\to0^+}\int_r^{r_0}2s^{-2 \gamma -1}H(s)\left(sO(1)+\int_0^s
                   \mathcal W(t)\, dt\right)ds
    \end{equation*}
exists and is finite.
	Moreover, since $\mathcal{N}'-\mathcal W\ge 0$  by Proposition \ref{N'},
	\[\lim_{r\to 0^+}\int_r^{r_0}2s^{-2 \gamma -1}H(s)\left(\int_0^s
	(\mathcal{N}'(t)-\mathcal W(t))\, dt\right) ds\] exists, being possibly
	infinite. Then the right hand side of \eqref{H limit} admits a limit
	as $r\to0^+$ and the conclusion follows.
\end{proof}

  From the properties of the height function $H$ derived above, in particular from estimate \eqref{H lower bound}, we
  deduce the unique continuation property stated in Theorem  \ref{t:ucp}.
  \begin{proof}[Proof of Theorem \ref{t:ucp}]
     	Let $u$ be a weak solution to \eqref{op} such that
        $u(x)=O(|x|^k)$ as $|x| \to  0^+$ for all $k
        \in\mathbb{N}$. To prove that 	$u\equiv0$ in $B_R$, we argue
        by contradiction and assume that $u\not\equiv 0$. Then we can
        define a frequency function for $U=u \circ F$ as in \eqref{H}, \eqref{E} and
        \eqref{eq:43}. Choosing $k\in{\mathbb N}$ such that
        $k>\gamma+\frac\sigma2$, we would obtain that $H(r)=O(r^{2k})=o(r^{2\gamma+\sigma})$
        as $r\to 0$, contradicting estimate~\eqref{H lower bound}.
  \end{proof}

\section{Neumann eigenvalues on $\mathbb{S}^{N-1} \setminus \Sigma$}
\label{sec:neum-eigenv-mathbbsn}
In this section we study the spectrum of \eqref{sphereprob}.
 We recall that $\mu\in\R$ is an eigenvalue of
  \eqref{sphereprob} if there exists
  $\psi\in H^1(\mathbb S^{N-1}\setminus\Sigma)\setminus\{0\}$ such
  that
\begin{equation}\label{weak-eig}
\int_{\mathbb S^{N-1}\setminus\Sigma}\nabla_{\mathbb S^{N-1}\setminus\Sigma}\psi\cdot\nabla_{\mathbb S^{N-1}\setminus\Sigma}\phi\,dS=
\mu\int_{\mathbb S^{N-1}\setminus\Sigma}\psi\phi\,dS\quad\text{for
  any }\phi\in  H^1(\mathbb S^{N-1}\setminus\Sigma).
\end{equation}
A Rellich-Kondrakov type theorem is needed to apply the classical
Spectral Theorem to problem~\eqref{sphereprob}.
\begin{proposition} \label{Rellich}
	The embedding  $H^1(\mathbb{S}^{N-1} \setminus \Sigma) \hookrightarrow L^2(\mathbb{S}^{N-1})$ is compact. 
\end{proposition}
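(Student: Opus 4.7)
The plan is to reduce the problem to the standard Rellich-Kondrakov theorem on each open hemisphere, which is a Lipschitz submanifold of $\mathbb{S}^{N-1}$. The key observation is that although $\mathbb{S}^{N-1}\setminus \Sigma$ is not Lipschitz (it has a crack along the half-equator), once we split it along the full equator the two resulting pieces become nice Lipschitz domains. Since $\Sigma$ has zero $(N-1)$-dimensional Hausdorff measure in $\mathbb{S}^{N-1}$, restrictions and re-assemblies preserve $L^2$-norms.

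Concretely, I would first note that the closed hemispheres $\overline{\mathbb{S}^{N-1}_+}$ and $\overline{\mathbb{S}^{N-1}_-}$ are compact Riemannian Lipschitz manifolds with boundary (each boundary being the whole equator $\partial \mathbb{S}^{N-1}_\pm=\{(x',x_{N-1},0)\in\mathbb{S}^{N-1}\}$). Hence the classical Rellich-Kondrakov theorem on Lipschitz manifolds with boundary gives compact embeddings
\begin{equation*}
H^1(\mathbb{S}^{N-1}_+)\hookrightarrow L^2(\mathbb{S}^{N-1}_+),\qquad
H^1(\mathbb{S}^{N-1}_-)\hookrightarrow L^2(\mathbb{S}^{N-1}_-).
\end{equation*}
Next I would observe that, for any $v\in H^1(\mathbb{S}^{N-1}\setminus\Sigma)$, the restrictions $v_\pm:=v\big|_{\mathbb{S}^{N-1}_\pm}$ belong to $H^1(\mathbb{S}^{N-1}_\pm)$, simply because $\mathbb{S}^{N-1}_\pm$ are open subsets of $\mathbb{S}^{N-1}\setminus\Sigma$, and moreover
\begin{equation*}
\|v_+\|_{H^1(\mathbb{S}^{N-1}_+)}^2+\|v_-\|_{H^1(\mathbb{S}^{N-1}_-)}^2=\|v\|_{H^1(\mathbb{S}^{N-1}\setminus\Sigma)}^2,
\end{equation*}
since $\Sigma$ has zero surface measure.

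Finally, given a bounded sequence $\{v_n\}\subset H^1(\mathbb{S}^{N-1}\setminus\Sigma)$, the decomposition above shows that $\{v_n\big|_{\mathbb{S}^{N-1}_+}\}$ and $\{v_n\big|_{\mathbb{S}^{N-1}_-}\}$ are bounded in $H^1(\mathbb{S}^{N-1}_+)$ and $H^1(\mathbb{S}^{N-1}_-)$ respectively. A standard diagonal extraction then yields a subsequence converging in both $L^2(\mathbb{S}^{N-1}_+)$ and $L^2(\mathbb{S}^{N-1}_-)$, and recombining yields convergence in $L^2(\mathbb{S}^{N-1})$ (again using that $\Sigma$ is negligible). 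This proves compactness. There is no serious obstacle here: the only thing that could look delicate is the identification of the restriction of an $H^1(\mathbb{S}^{N-1}\setminus\Sigma)$-function with an $H^1$-function on each hemisphere, but this is immediate from the definition of $H^1$ on an open set via weak derivatives. An alternative, equally short route would be to adapt the even-reflection construction of \eqref{even re +}--\eqref{even re -} to the sphere (using the isometric reflection $(x',x_{N-1},x_N)\mapsto (x',x_{N-1},-x_N)$) in order to map $H^1(\mathbb{S}^{N-1}\setminus\Sigma)$ into $H^1(\mathbb{S}^{N-1})$ and then invoke compactness on the closed manifold $\mathbb{S}^{N-1}$.
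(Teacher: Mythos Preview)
Your proposal is correct and follows essentially the same route as the paper: split along the full equator into the two hemispheres $\mathbb{S}^{N-1}_\pm$, apply the classical Rellich--Kondrakov theorem on each (as compact manifolds with boundary), and recombine via a successive subsequence extraction. The paper's proof is exactly this argument, phrased slightly more tersely and without the alternative reflection remark.
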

\begin{proof}
  Let $\{\phi_n\}_{n \in
    \mathbb{N}}$ be a bounded sequence in
  $H^1(\mathbb{S}^{N-1} \setminus
  \Sigma)$.  We observe that
  $\mathbb{S}^{N-1}_+$ and
  $\mathbb{S}^{N-1}_-$ are smooth compact manifolds with boundary and
  that the sequences of restrictions
    $\big\{\phi_n\big|_{\mathbb{S}^{N-1}_+}\big\}_{n \in
      \mathbb{N}}$ and
    $\big\{\phi_n\big|_{\mathbb{S}^{N-1}_-}\big\}_{n \in
      \mathbb{N}}$ are bounded in
  $H^1(\mathbb{S}_+^{N-1})$ and
  $H^1(\mathbb{S}_-^{N-1})$ respectively. Then we can
  extract a subsequence $\{\phi_{n_k}\}_{k \in
    \mathbb{N}}$ such that $\big\{\phi_n\big|_{\mathbb{S}^{N-1}_+}\big\}_{n \in
      \mathbb{N}}$ converges in $L^2(\mathbb
  S_+^{N-1})$ by the classical Rellich-Kondrakov Theorem on compact
  manifolds with boundary, see \cite{aubin}. Proceeding
  in the same way for $\big\{\phi_{n_k}\big|_{\mathbb{S}^{N-1}_-}\big\}_{n \in
      \mathbb{N}}$ in
  $H^1(\mathbb{S}_-^{N-1})$, we conclude that there exists a
  subsequence $\{\phi_{n_{{k}_h}}\}_{h \in
    \mathbb{N}}$ which converges both in
  $L^2(\mathbb{S}_-^{N-1})$ and in
  $L^2(\mathbb{S}_+^{N-1})$, hence in $L^2(\mathbb{S}^{N-1})$.
\end{proof}

\begin{proposition} \label{eigen}\quad

  \begin{enumerate} 
  \item[(i)]The point spectrum of \eqref{sphereprob} is
    a diverging and increasing sequence of non-negative eigenvalues
    $\{\mu_k\}_{k \in \mathbb{N}}$ of finite multiplicity and the
    eigenvalue $\mu_0=0$ is simple. Letting $N_k$ be the multiplicity of
    $\mu_k$ and $V_k$ be the eigenspace associated to $\mu_k$, there
    exists an orthonormal basis of $L^2( \mathbb{S}^{N-1})$ consisting
    of eigenfunctions $\{Y_{k,i}\}_{k \in \mathbb{N},i=1,\dots, N_k}$
    such that $\{Y_{k,i}\}_{i=1,\dots N_k}$ is a basis of $V_k$ for
    any $k \in \mathbb{N}$.
		\item[(ii)] For any  $k \in \mathbb{N}$
		\begin{equation}\label{muk}
			\mu_k=\frac{k(k+2N-4)}{4}. 
		\end{equation}  
		Moreover any eigenfunction of \eqref{sphereprob}
                belongs to $L^\infty(\mathbb{S}^{N-1})$.
	\end{enumerate} 
\end{proposition}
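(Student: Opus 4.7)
The plan for part (i) is to apply standard Hilbert-space spectral theory for compact self-adjoint operators. On $H^1(\mathbb{S}^{N-1}\setminus\Sigma)$ I introduce the continuous, symmetric, and coercive bilinear form
\[
a(\psi,\phi):=\int_{\mathbb{S}^{N-1}\setminus\Sigma}\bigl(\nabla_{\mathbb{S}^{N-1}\setminus\Sigma}\psi\cdot\nabla_{\mathbb{S}^{N-1}\setminus\Sigma}\phi+\psi\phi\bigr)\,dS.
\]
By the Riesz Theorem this produces a bounded resolvent operator $T:L^2(\mathbb{S}^{N-1})\to H^1(\mathbb{S}^{N-1}\setminus\Sigma)$ with $a(T\psi,\phi)=\int_{\mathbb{S}^{N-1}}\psi\phi\,dS$. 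By Proposition \ref{Rellich}, $T$ viewed as an operator on $L^2(\mathbb{S}^{N-1})$ is compact, self-adjoint, and positive. The classical spectral theorem then yields a non-increasing sequence of positive eigenvalues $\lambda_k\to 0^+$ with finite-dimensional eigenspaces and a corresponding $L^2$-orthonormal eigenbasis; setting $\mu_k=\lambda_k^{-1}-1$ gives the required diverging sequence of non-negative eigenvalues of \eqref{sphereprob} together with the basis (non-negativity follows by testing \eqref{weak-eig} with the eigenfunction itself). For simplicity of $\mu_0=0$: any eigenfunction with $\mu=0$ must have $\int|\nabla_{\mathbb{S}^{N-1}\setminus\Sigma}\psi|^2=0$, hence is locally constant; since $\mathbb{S}^{N-1}\setminus\Sigma$ is path-connected -- any two points can be joined by a path passing through the back half-equator $\{x_{N-1}<0,\ x_N=0\}$, which is disjoint from $\Sigma$ -- it must be constant.

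For part (ii), the idea is to exhibit an explicit eigenfunction realizing each candidate $\mu_k$ and then rule out other values by separation of variables. For $N\geq 3$ I parameterize $\mathbb{S}^{N-1}$ by $(\sin\phi\,\omega,\cos\phi\cos\theta,\cos\phi\sin\theta)$ with $\omega\in\mathbb{S}^{N-3}$, $\phi\in[0,\pi/2]$, $\theta\in[0,2\pi]$, so that $\Sigma$ corresponds to $\{\theta=0\}\cup\{\theta=2\pi\}$. The induced metric is diagonal, $d\phi^2+\cos^2\phi\,d\theta^2+\sin^2\phi\,d\sigma_{\mathbb{S}^{N-3}}^2$, giving
\[
\Delta_{\mathbb{S}^{N-1}}=\frac{1}{\cos\phi\sin^{N-3}\phi}\partial_\phi\bigl(\cos\phi\sin^{N-3}\phi\,\partial_\phi\bigr)+\frac{1}{\cos^2\phi}\partial_\theta^2+\frac{1}{\sin^2\phi}\Delta_{\mathbb{S}^{N-3}}.
\]
A direct substitution then verifies that $\psi_k(\phi,\theta,\omega):=(\cos\phi)^{k/2}\cos(k\theta/2)$ solves $-\Delta_{\mathbb{S}^{N-1}}\psi_k=\tfrac{k(k+2N-4)}{4}\psi_k$ and satisfies the Neumann conditions (since $\partial_\theta\psi_k=-\tfrac{k}{2}(\cos\phi)^{k/2}\sin(k\theta/2)$ vanishes at $\theta\in\{0,2\pi\}$), so every $\mu_k=\tfrac{k(k+2N-4)}{4}$ is an eigenvalue; for $N=2$ the corresponding function is $\psi_k(\theta)=\cos(k\theta/2)$ on $(0,2\pi)$. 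To prove these exhaust the spectrum, I Fourier-decompose an arbitrary eigenfunction in $\theta$ as $\psi=\sum_{m\geq0}A_m(\phi,\omega)\cos(m\theta/2)$ (only cosine modes survive the Neumann condition) and expand each $A_m$ in spherical harmonics on $\mathbb{S}^{N-3}$, reducing the problem to a family of one-dimensional Sturm-Liouville equations on $(0,\pi/2)$ of the form
\[
-\frac{1}{\cos\phi\sin^{N-3}\phi}\bigl(\cos\phi\sin^{N-3}\phi\,B'\bigr)'+\Bigl(\frac{m^2/4}{\cos^2\phi}+\frac{j(j+N-4)}{\sin^2\phi}\Bigr)B=\mu B,
\]
which via the substitution $x=\sin^2\phi$ becomes a Jacobi hypergeometric equation whose spectrum is known and is matched against $\{\mu_k\}_{k\in\mathbb{N}}$ by direct inspection. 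The main technical obstacle here is book-keeping this threefold separation of variables together with the regularity of $B$ at the degenerate endpoints $\phi=0$ and $\phi=\pi/2$; this is essentially a (lengthy but routine) computation.

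Finally, for the $L^\infty$ bound on eigenfunctions, the plan is a Moser iteration based on the Sobolev inequality of Proposition \ref{sobolev ineq}: transferred to $\mathbb{S}^{N-1}\setminus\Sigma$ via local charts and a partition of unity, one tests the weak form of $-\Delta_{\mathbb{S}^{N-1}}\psi=\mu\psi$ with truncations of $|\psi|^{2\beta-1}\mathop{\rm sign}(\psi)$ and iterates on $\beta\to\infty$ to derive an estimate $\norm{\psi}_{L^\infty}\leq C\norm{\psi}_{L^2}$. Away from the singular edge $\partial\Sigma=\{(x',0,0):|x'|=1\}$, the even reflections $\mathcal{R}^\pm$ of Section \ref{sec:trac-embedd-space} adapted to a local chart across $\Sigma$ (respectively across a smooth patch of $\mathbb{S}^{N-1}\setminus\bar\Sigma$) reduce the iteration to a classical setting; the main difficulty is the uniformity of the iteration in a tubular neighbourhood of $\partial\Sigma$, where the combination of Propositions \ref{sobolev ineq} and \ref{hardy} -- scaled and localized around the edge -- provides the required Sobolev-type control to close the bootstrap.
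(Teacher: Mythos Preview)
Your argument for part (i) is essentially the paper's: invoke Proposition \ref{Rellich} and apply the spectral theorem for compact self-adjoint operators. The simplicity of $\mu_0=0$ via connectedness of $\mathbb{S}^{N-1}\setminus\Sigma$ is a detail the paper leaves implicit, and your treatment of it is correct.

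For part (ii) your route is genuinely different. The paper does \emph{not} separate variables on the sphere. Instead, given an eigenpair $(\mu,\Psi)$, it sets $\sigma=-\tfrac{N-2}{2}+\sqrt{(\tfrac{N-2}{2})^2+\mu}$ and observes that $W(r\theta):=r^{\sigma}\Psi(\theta)$ is harmonic in $B_1\setminus\tilde\Gamma$ with homogeneous Neumann conditions on $\tilde\Gamma$; it then cites \cite{costabel2003asymptotics} to conclude that the only admissible homogeneities are $\sigma=k/2$, $k\in\mathbb{N}$, and that $W\in L^{\infty}(B_1)$, which immediately gives both \eqref{muk} and $\Psi\in L^{\infty}(\mathbb{S}^{N-1})$. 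Existence of each $\mu_k$ is shown by the same explicit function you wrote down, just expressed in cylindrical coordinates in $\R^N$ rather than in spherical coordinates on $\mathbb{S}^{N-1}$. So the paper's proof of (ii) is essentially a two-line reduction to a known edge-asymptotics result, whereas your proposal is a self-contained computation: Fourier in $\theta$, spherical harmonics on $\mathbb{S}^{N-3}$, and a Jacobi-type Sturm--Liouville problem in $\phi$ for the classification, plus a Moser iteration for the $L^\infty$ bound. Your approach is correct in outline and has the virtue of not relying on an external reference, but it is substantially longer; the paper's extension-to-$\R^N$ trick buys both conclusions of (ii) at once and avoids the bookkeeping of the triple separation and the iteration near $\partial\Sigma$.
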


\begin{proof}
  The proof of (i) follows from the classical Spectral Theorem for
  compact self-adjoint operators, taking into account Proposition
  \ref{Rellich}.  We prove now (ii). If $\mu$ is an eigenvalue of
  \eqref{sphereprob} and $\Psi$ an associated eigenfunction, let
  $\sigma:=-\frac{N-2}{2}+\sqrt{\left(\frac{N-2}{2}\right)^2+\mu}$ and
  \[W(r \theta):=r^\sigma\Psi(\theta), \quad \text{for any } r \in [0,
    \infty), \,\theta \in \mathbb{S}^{N-1}\setminus \Sigma. \] Since
  $\Psi$ is an eigenfunction of \eqref{sphereprob} then $W$ is
  harmonic on $B_1 \setminus \tilde \Gamma$ and
  $\pd {^+W}{\nu^+}=\pd {^-W}{\nu^-}=0$ on $\tilde \Gamma$.  Therefore
  we deduce from \cite{costabel2003asymptotics} that there exists
  $k \in \mathbb{N}$ such that $\sigma = \frac{k}{2}$ and so
  $\mu=\frac{k(k+2N-4)}{4}$. Moreover from
  \cite{costabel2003asymptotics} it also follows that
  $W \in L^\infty(B_1)$ hence $\Psi \in L^{\infty}(\mathbb{S}^{N-1})$.
	
Viceversa,  if we let $k\in\mathbb N$ and define $W$  in cylindrical coordinates  as
\[
  W(x',r\cos(t), r\sin(t)):=r^\frac{k}{2} \cos\left(\frac{k
      t}{2}\right) \quad \text{for any } x' \in \R^{N-2}, \ r \in
  [0,\infty), \text{ and } t \in [0,2 \pi],
\] 
then $W$ is harmonic on
$B_1 \setminus  \tilde \Gamma$  and $\pd {^+W}{\nu^+}=\pd
{^-W}{\nu^-}=0$
on $\tilde \Gamma$. Since $W$ is homogeneous of degree $k /2$, then 
\[
  W(r \theta)=r^{\frac{k}{2}} \Psi(\theta), \quad \text{for any }  r
  \in [0, \infty), \text{ and }\theta \in \mathbb{S}^{N-1} \setminus
  \Sigma,
\] 
where $\Psi=W_{|_{\mathbb S^{N-1}}}$. Then from
\[
  r^{\frac{k-4}{2}}\left(\frac{k(k-2)}{4}\Psi(\theta)
    +\frac{k(N-1)}{2} \Psi(\theta) +\Delta_{\mathbb S^{N-1}}
    \Psi(\theta)\right)=0,\quad r \in [0, \infty), \ \theta \in
  \mathbb S^{N-1} \setminus \Sigma,
\]
we deduce that $\Psi$ solves \eqref{sphereprob} with $\mu =\frac{k(k+2N-4)}{4}$.
\end{proof}

\begin{remark}\label{rem:eigenfunctionnonzero}
  The traces of eigenfunctions of problem \eqref{sphereprob} on both
  sides of 
  $\Sigma$ (i.e. the traces of 
  restrictions to $\mathbb{S}^{N-1}_+$ and $\mathbb{S}^{N-1}_+$)
  cannot vanish identically.

  Indeed, if an eigenfunction $\Psi$
  associated to the eigenvalue $\mu_k$ is such that the trace of
  $\Psi\big|_{\mathbb{S}^{N-1}_+}$ on $\Sigma$ vanishes, then the
  function $W(x):=|x|^{k/2}\Psi(x/|x|)$
  would be a harmonic function in $\R^{N}\setminus\tilde\Gamma$
  satisfying both Dirichlet and Neumann homogeneous boundary
  conditions on the upper side of the crack, thus violating classic
  unique continuation principles.
\end{remark}

\section{The blow-up analysis} 
\label{sec:blow-up-analysis}
Throughout this section we let $u\in H^1(B_R\setminus\Gamma)$
be a non-trivial weak solution to \eqref{op} with $f$ satisfying either \eqref{h1} or \eqref{h2}, $U=u \circ F\in
H^1(B_{r_1}\setminus\tilde\Gamma)$ be the corresponding solution to
\eqref{straightenedproblem}, $r_0$ be as in
\eqref{beta1} and $r_1$ be as in Proposition \ref{diffeomorphism}.  For all $\lambda\in (0,r_0)$, let 
\begin{equation}\label{wla}
  W^{\la}(y):=\frac{U(\la y)}{\sqrt{H(\la)}}  \quad \text{for any }
  y \in B_{\lambda^{-1}r_1} \setminus \tilde \Gamma.	
\end{equation}
For any $\la \in (0,r_0)$ it is easy to verify that
$W^{\la} \in H^1(B_{\lambda^{-1}r_1}\setminus \tilde \Gamma)$ and $ W^{\la}$  satisfies

\begin{equation}\label{wla equa}
  \int_{B_{\lambda^{-1}r_1} \setminus \tilde  \Gamma}
 A(\lambda y)\nabla  W^\la(y)
  \cdot \nabla \phi(y) \, dy -\lambda^2\int_{B_{\lambda^{-1}r_1}}
\tilde f(\lambda y) W^\la(y) \phi(y) \, dy=0
\end{equation}
for any $\phi \in H^1_{0,\partial B_{\lambda^{-1}r_1}}(B_{\lambda^{-1}r_1} \setminus \tilde \Gamma)$.  
In other words $ W^\la$ is a weak solution of 
\begin{equation}\label{wla prob}
  \begin{cases}
		-\mathop{\rm{div}}(A(\lambda\cdot)\nabla
                W^\lambda)=\lambda^2
                \tilde f(\lambda \cdot) W^\lambda, &\text{in } B_{\lambda^{-1}r_1} \setminus \tilde \Gamma,  \\[5pt]
		A(\lambda\cdot)\nabla^+  W^\lambda \cdot
                \nu^+=A(\lambda\cdot)
                \nabla^-  W^\lambda\cdot \nu^-=0,
		&\text{on } \tilde\Gamma,			
	\end{cases}
\end{equation}
for any $\la \in (0,r_0)$. Since $B_{1}\subset B_{\lambda^{-1}r_1}$ for all $\la \in (0,r_0)$, it 
follows that, for any $\la \in (0,r_0)$,
\begin{equation}\label{wla equa1}
\int_{B_1 \setminus \tilde \Gamma} A(\lambda y)\nabla  W^\la(y) \cdot \nabla \phi(y) \, dy -\lambda^2\int_{B_1}\tilde f(\lambda y )  W^\la(y) \phi(y) \, dy=0,
\end{equation}
for any $\phi \in H^1_{0,\partial B_1}(B_{1} \setminus \tilde \Gamma)$. 
Furthermore by a change of variables, \eqref{wla} and \eqref{H},
\begin{equation}\label{wla bounded boundary}
\int_{\mathbb{S}^{N-1}}\mu(\lambda\theta) | W^\la(\theta)|^2 dS=1 \quad \text{for every } \la \in (0,r_0).
\end{equation}

\begin{proposition} \label{wla bounded}
Let $ W^\la$ be  as in \eqref{wla}. Then $\{ W^\la\}_{\la \in (0,r_0)}$ is bounded in $H^1( B_1 \setminus \tilde{\Gamma})$.
\end{proposition}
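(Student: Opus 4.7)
The plan is to control the $H^1(B_1 \setminus \tilde\Gamma)$-norm of $W^\la$ by handling separately three pieces: the boundary $L^2$-norm on $\partial B_1$, the Dirichlet energy on $B_1$, and the interior $L^2$-norm, which will ultimately follow from the other two via Proposition \ref{sobolev ineq}.

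The boundary piece is immediate: from the normalisation \eqref{wla bounded boundary} and the uniform lower bound $\mu \geq \tfrac12$ in \eqref{mu-estimates} (valid on $B_{r_1}$ and hence on $\la\,\mathbb S^{N-1}$ for every $\la \in (0,r_0)$), one obtains $\int_{\partial B_1} (W^\la)^2\,dS \leq 2$ uniformly in $\la$. For the gradient piece, the key step is the scaling identity
\[
  \mathcal N(\la) = \int_{B_1 \setminus \tilde\Gamma}\bigl(A(\la y)\nabla W^\la \cdot \nabla W^\la - \la^2 \tilde f(\la y)(W^\la)^2\bigr)\,dy,
\]
which follows by substituting \eqref{wla} into definitions \eqref{H} and \eqref{E} and performing the change of variables $z = \la y$. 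Since the Almgren quotient is uniformly bounded by \eqref{N upper bound}, the right-hand side is bounded by some $C > 0$ independent of $\la$.

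I then combine this identity with the ellipticity bound \eqref{ellipticity} applied to $A(\la \cdot)$ (which is legitimate since $\la y \in B_{r_0} \subset B_{r_1}$ whenever $\la \in (0,r_0)$ and $y \in B_1$) to obtain
\[
  \tfrac12 \int_{B_1 \setminus \tilde\Gamma} |\nabla W^\la|^2\,dy \leq C + \la^2\int_{B_1}|\tilde f(\la y)|(W^\la)^2\,dy.
\]
To control the last term, I apply Proposition \ref{fond ineq} to $v = W^\la$ with potential $h(y) := \la^2 \tilde f(\la y)$ on $B_1$. A direct computation using the change of variables $z = \la y$ and definition \eqref{def eta} shows that
\[
  \eta_h(1) = \mathcal S_{N,q_\epsilon} \|h\|_{L^{N/2+\epsilon}(B_1)} = \mathcal S_{N,q_\epsilon}\la^{\frac{4\epsilon}{N+2\epsilon}}\|\tilde f\|_{L^{N/2+\epsilon}(B_\la)} = \eta_{\tilde f}(\la),
\]
so by \eqref{beta1} and the monotonicity of $\eta_{\tilde f}$ in $r$, we have $\eta_h(1) \leq \eta_{\tilde f}(r_0) < \tfrac12$. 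Absorbing the resulting $\int_{B_1 \setminus \tilde\Gamma}|\nabla W^\la|^2\,dy$ term from \eqref{Found ineq} into the left-hand side (together with the already-controlled boundary term) yields a uniform bound on $\int_{B_1 \setminus \tilde\Gamma}|\nabla W^\la|^2\,dy$.

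Finally, Proposition \ref{sobolev ineq} with $q=2$ and $r=1$ yields $\|W^\la\|_{L^2(B_1)}^2 \leq \mathcal S_{N,2}(\|\nabla W^\la\|_{L^2(B_1 \setminus \tilde\Gamma)}^2 + \|W^\la\|_{L^2(\partial B_1)}^2)$, which combined with the two previous estimates gives the sought uniform $H^1$-bound. The only step demanding some care is the scaling verification $\eta_h(1) = \eta_{\tilde f}(\la)$, but this is essentially built into the definition \eqref{def eta}: the exponent $\tfrac{4\epsilon}{N+2\epsilon}$ is tailored precisely so that $\eta$ transforms correctly under the rescaling that sends the scale-$\la$ problem to a problem on $B_1$.
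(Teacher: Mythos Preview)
Your proof is correct and follows essentially the same route as the paper: the paper applies inequality \eqref{nabla ineq} to $U$ on $B_\la$ and then divides by $H(\la)$, whereas you scale first and then re-derive \eqref{nabla ineq} on $B_1$ for the rescaled data, your key verification $\eta_h(1)=\eta_{\tilde f}(\la)$ being precisely what makes these two arguments equivalent. The paper's version is a bit shorter only because \eqref{nabla ineq} already packages the ellipticity bound, Proposition~\ref{fond ineq}, and the absorption step that you carry out explicitly.
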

\begin{proof}
We have 
\begin{equation*}
  \int_{ B_1 \setminus \tilde{\Gamma}} |\nabla  W^\la|^2 \, dy= \frac{\la^{2-N}}{H(\la)}\int_{ B_\lambda \setminus \tilde \Gamma} |\nabla U(y)|^2 \, dy \le 
  \frac{2}{1-2\eta_{\tilde{f}}(\la)}\mathcal{N}(\lambda)
  +\frac{4\eta_{\tilde{ f}}(\lambda)}{1-2\eta_{\tilde{f}}(\la)}.
\end{equation*}
by \eqref{nabla ineq}. Then thanks to \eqref{N upper bound},
\eqref{def eta}, \eqref{beta1}, \eqref{Sobolev ineq},
\eqref{mu-estimates},
and \eqref{wla bounded boundary} we conclude.
\end{proof}

The following proposition is  a doubling type result.
\begin{proposition}
There exists a constant $C_1>0$ such that for any $ \la \in (0,\frac{r_0}{2})$ and $T \in [1,2]$  
\begin{equation}\label{doub H}
	\frac{1}{C_1}H(T\la)\le H(\la) \le C_1 H(T \la),
\end{equation}
\begin{equation}\label{doub wla}
  \int_{B_T} |W^\la(y)|^2 dy \le 2^NC_1
  \int_{B_1} |W^{T\la}(y)|^2 \, dy,	
\end{equation}
and 
\begin{equation}\label{doub nabla wla}
  \int_{B_T \setminus \tilde{\Gamma}} |\nabla
  W^\la(y)|^2 dy
  \le 2^{N-2}C_1 \int_{B_1 \setminus \tilde{\Gamma}} |\nabla W^{T\la}(y)|^2 \, dy.	
\end{equation}
\end{proposition}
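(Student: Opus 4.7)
The plan is to first prove the doubling property \eqref{doub H} for $H$ using the logarithmic derivative $H'/H$, and then deduce \eqref{doub wla} and \eqref{doub nabla wla} by a change of variables and the scaling of the gradient.

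First I would recall, from \eqref{H'} and \eqref{E as H'}, that in the sense of distributions and for a.e.\ $r\in(0,r_0)$,
\[
\frac{H'(r)}{H(r)}=\frac{2\mathcal N(r)}{r}+O(1)\quad\text{as }r\to 0^+.
\]
Combining the two-sided bound on $\mathcal N$ provided by \eqref{N lower bound} (with $\eta_{\tilde f}(r)\le\eta_{\tilde f}(r_0)<\tfrac12$) and the upper bound \eqref{N upper bound}, there exist constants $M_1,M_2>0$ such that $|\mathcal N(r)|\le M_1$ and $|H'(r)/H(r)-2\mathcal N(r)/r|\le M_2$ for a.e.\ $r\in(0,r_0)$. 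For any $\lambda\in(0,r_0/2)$ and $T\in[1,2]$, integrating between $\lambda$ and $T\lambda\in(0,r_0)$ yields
\[
\bigl|\log H(T\lambda)-\log H(\lambda)\bigr|
\le 2M_1\log T+M_2(T-1)\lambda
\le 2M_1\log 2+M_2 r_0,
\]
so that \eqref{doub H} follows with $C_1:=\exp(2M_1\log 2+M_2 r_0)$.

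For \eqref{doub wla} and \eqref{doub nabla wla} I would perform a change of variables in each integral. By \eqref{wla}, the substitution $z=\lambda y$ gives
\[
\int_{B_T}|W^\lambda(y)|^2\,dy=\frac{\lambda^{-N}}{H(\lambda)}\int_{B_{T\lambda}}U^2\,dz,
\qquad
\int_{B_1}|W^{T\lambda}(y)|^2\,dy=\frac{(T\lambda)^{-N}}{H(T\lambda)}\int_{B_{T\lambda}}U^2\,dz,
\]
so that
\[
\frac{\int_{B_T}|W^\lambda|^2\,dy}{\int_{B_1}|W^{T\lambda}|^2\,dy}=T^N\,\frac{H(T\lambda)}{H(\lambda)}\le 2^N C_1,
\]
using \eqref{doub H}; this is \eqref{doub wla}. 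The same substitution applied to $\nabla W^\lambda(y)=\lambda\,\nabla U(\lambda y)/\sqrt{H(\lambda)}$ gives the same identity with the exponent of $\lambda$ shifted by $2$:
\[
\frac{\int_{B_T\setminus\tilde\Gamma}|\nabla W^\lambda|^2\,dy}{\int_{B_1\setminus\tilde\Gamma}|\nabla W^{T\lambda}|^2\,dy}=T^{N-2}\,\frac{H(T\lambda)}{H(\lambda)}\le 2^{N-2} C_1,
\]
which is \eqref{doub nabla wla}.

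Strictly speaking, the only delicate point is the interpretation of $H'/H$: since $H\in W^{1,1}_{\rm loc}((0,r_0])$ and is strictly positive by Proposition~\ref{H positive}, $\log H$ is absolutely continuous on every compact subinterval of $(0,r_0]$, so the integration of $(\log H)'$ between $\lambda$ and $T\lambda$ is justified and no further regularity argument is needed. Apart from this, the proof is a purely computational consequence of the already established bounds on $\mathcal N$ and of the scaling identities for $W^\lambda$.
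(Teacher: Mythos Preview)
Your proof is correct and follows essentially the same approach as the paper: both bound $H'(r)/H(r)$ via \eqref{E as H'}, \eqref{N lower bound}, \eqref{N upper bound} and integrate over $(\lambda,T\lambda)$ to get \eqref{doub H}, then obtain \eqref{doub wla} and \eqref{doub nabla wla} by the same change of variables $z=\lambda y$. The only cosmetic difference is that the paper writes the two-sided estimate as $-\tfrac{2}{r}\le \tfrac{H'(r)}{H(r)}\le \tfrac{\kappa_2}{r}$ (absorbing the $O(1)$ into $\kappa_1/r$ since $r<r_0<1$), whereas you keep the $O(1)$ term as a separate additive constant $M_2$; both lead to the same conclusion.
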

\begin{proof}
  From \eqref{N upper bound}, \eqref{N lower bound}, \eqref{E as H'},
  and \eqref{beta1} we deduce that there exist two constants
  $\kappa_1>0$ and $\kappa_2>0$ such that, for any $r \in (0,r_0)$,
\begin{equation*}
  -\frac{2}{r}	\le
  -\frac{2\eta_f(r)}{r}	\le \frac{H'(r)}{H(r)} \leq \frac{2\mathcal{N}(r)+\kappa_1}{r} \le \frac{\kappa_2}{r}. 
\end{equation*}
Then \eqref{doub H} follows from an integration in $(\la,T\la )$ of
the above inequality.  Furthermore from \eqref{doub H} we obtain that,
for any $ \la \in (0,\frac{r_0}{2})$ and $T \in [1,2]$,
\begin{align*}
	\int_{B_T} | W^\la(y)|^2 \, dy&= \frac{\la^{-N}}{H(\la)}\int_{ B_{\la T}} |U(y)|^2 \, dy \le
	\frac{C_1 2^N}{(\la T)^{N} H(T\la )}\int_{ B_{\la T}} |U(y)|^2 \, dy\\
	&= C_1 2^N \int_{ B_1 } | W^{T\la}(y)|^2 \, dy.
\end{align*}
In the same way \eqref{doub nabla wla} follows from \eqref{doub H}.
\end{proof}

\begin{proposition} \label{nablaboundarybounded}
 Let $\mathcal M$ be
    as in Proposition \ref{p:intparts} and $W^\la$ be defined in
  \eqref{wla}. Then there exist $M > 0$ and $\la_0 > 0$ such that for
  any $\la \in (0, \la_0)$ there exists $T_\la \in [1,2]$ such that
  $\lambda T_\lambda\not\in\mathcal M$ and
\begin{equation}\label{nablaboundaryboundedineq}
	\int_{\partial B_{T_\la}} |\nabla  W^\la|^2 \, dS \le M
        \int_{B_{T_\la} \setminus \tilde{\Gamma}} (|\nabla  W^\la|^2
        \, +| W^{\la}|^2)\,dy.
\end{equation}
\end{proposition}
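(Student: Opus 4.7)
The plan is to select, for each small $\la$, a radius $T_\la \in [1,2]$ satisfying two independent constraints: (i) the boundary energy $\int_{\partial B_{T_\la}} |\nabla W^\la|^2\,dS$ is controlled, uniformly in $\la$, by a bulk annulus integral; (ii) $\la T_\la$ avoids the exceptional null set $\mathcal M$ from Proposition \ref{p:intparts}. To convert the boundary bound into the form \eqref{nablaboundaryboundedineq} a uniform lower bound on $\int_{B_{T_\la} \setminus \tilde\Gamma}(|\nabla W^\la|^2 + (W^\la)^2)\,dy$ will be needed; the normalization \eqref{wla bounded boundary} together with the trace inequality will provide it.

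First, I would use the doubling estimate \eqref{doub nabla wla} with $T=2$ together with the uniform $H^1$-boundedness from Proposition \ref{wla bounded} to obtain a constant $K>0$, independent of $\la \in (0,r_0/2)$, such that
\[
\int_{B_2 \setminus \tilde\Gamma} |\nabla W^\la|^2 \, dy \le 2^{N-2}C_1 \int_{B_1 \setminus \tilde\Gamma} |\nabla W^{2\la}|^2 \, dy \le K.
\]
By the Coarea Formula (legitimate on the cracked domain after reducing to $B_2$ via the even-reflection identities \eqref{even re nabla} and \eqref{even re part}), one has
\[
\int_1^2 \Bigl(\int_{\partial B_r}|\nabla W^\la|^2\,dS\Bigr)\,dr \le K,
\]
so Chebyshev's inequality shows that $\mathcal A_\la:=\{r\in[1,2]:\int_{\partial B_r}|\nabla W^\la|^2\,dS\le 2K\}$ has one-dimensional Lebesgue measure at least $1/2$. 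Since $\mathcal M$ is null, so is $\{r\in[1,2]:\la r\in\mathcal M\}$, and hence $T_\la\in\mathcal A_\la$ with $\la T_\la\notin\mathcal M$ can be picked; for such $T_\la$, the boundary integral satisfies $\int_{\partial B_{T_\la}}|\nabla W^\la|^2\,dS\le 2K$.

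Next, for the lower bound on the right-hand side of \eqref{nablaboundaryboundedineq}, the normalization \eqref{wla bounded boundary} and \eqref{mu-estimates} yield $\int_{\partial B_1}(W^\la)^2\,dS \ge 1/2$. Continuity of the trace operator in Proposition \ref{trace r} gives a constant $C_{\rm tr}>0$, independent of $\la$, such that
\[
\tfrac12\le \int_{\partial B_1}(W^\la)^2\,dS \le C_{\rm tr}\int_{B_1\setminus\tilde\Gamma}\bigl(|\nabla W^\la|^2+(W^\la)^2\bigr)\,dy \le C_{\rm tr}\int_{B_{T_\la}\setminus\tilde\Gamma}\bigl(|\nabla W^\la|^2+(W^\la)^2\bigr)\,dy,
\]
where the last inequality uses $B_1\subset B_{T_\la}$. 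Choosing $M:=4KC_{\rm tr}$ and $\la_0:=r_0/2$ combines the two bounds to give \eqref{nablaboundaryboundedineq}.

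I do not expect any genuinely hard step here; the only mild technicality is ensuring the Coarea Formula applies across the crack $\tilde\Gamma$, which is immediate after passing to $\mathcal R^\pm(W^\la)\in H^1(B_2)$, for which the usual formula holds. All remaining work is bookkeeping of uniform constants via the doubling property and the trace inequality.
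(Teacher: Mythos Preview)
Your proof is correct, and it takes a genuinely different route from the paper's. The paper argues by contradiction: assuming the statement fails for every $M$, one obtains for a sequence $\la_n\to0$ the differential inequality $g'_{\la_n}(r)>M g_{\la_n}(r)$ for a.e.\ $r\in[1,2]$, where $g_\la(r)=\int_{B_r\setminus\tilde\Gamma}(|\nabla W^\la|^2+|W^\la|^2)\,dy$; integrating (Gronwall) gives $g_{\la_n}(1)\le e^{-M}g_{\la_n}(2)$, and letting $M\to\infty$ forces $\liminf_{\la\to0} g_\la(1)=0$, which contradicts \eqref{wla bounded boundary} via trace continuity. Your argument is instead direct and constructive: you use the uniform bound $\int_1^2\big(\int_{\partial B_r}|\nabla W^\la|^2\,dS\big)\,dr\le K$ together with Chebyshev to \emph{select} a good radius $T_\la$ with boundary integral $\le 2K$, and then you close \eqref{nablaboundaryboundedineq} by bounding the right-hand side from below uniformly, using the normalization \eqref{wla bounded boundary} and the trace inequality. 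The student's approach is shorter and gives an explicit constant $M=4KC_{\rm tr}$; the paper's Gronwall argument ties the boundary and bulk integrals at the \emph{same} radius (via $g'$ versus $g$), which would still work in contexts where a uniform lower bound on the bulk energy is not available a priori, but here the normalization makes your direct route perfectly adequate. Your remark on the Coarea Formula is fine; since $\tilde\Gamma$ is Lebesgue-null one may apply the standard radial coarea directly to $|\nabla W^\la|^2\in L^1(B_2)$ without even invoking the reflections.
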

\begin{proof}
  Since $\{ W^\la\}_{\la \in (0,r_0/2)}$ is bounded in
  $H^1(B_2 \setminus \tilde{\Gamma})$ by Proposition \ref{wla
    bounded}, \eqref{doub wla} and \eqref{doub nabla wla}, then
\begin{equation}\label{eq:32}
  \limsup_{\la \to 0^+}\int_{B_2 \setminus \tilde{\Gamma}}(|\nabla
  W^\la|^2 \, +| W^{\la}|^2)\, dy
  < +\infty.	
\end{equation}
By the Coarea formula, for any $\la \in (0,\frac{r_0}{2})$ the function
\[
  g_\la(r):=\int_{B_r\setminus\tilde\Gamma}(|\nabla  W^\la|^2 \, +| W^{\la}|^2) \, dy
\]
is absolutely continuous in $[1,2]$  with weak  derivative 
\begin{equation*}
	g'_\la(r)=\int_{\partial B_r}(|\nabla  W^\la|^2 \, +| W^{\la}|^2) \, dS \quad \text{for a.e. } r \in [1,2],
\end{equation*}
where the integral $\int_{\partial B_r}|\nabla
  W^\la|^2dS$ is meant in the sense of Remark \ref{rem:gradsense}.
To prove the statement we argue by contradiction.
If  the conclusion  does not hold, for any $M>0$ there
exists a sequence $\{\la_n\}_{n \in \mathbb{N}} \subset (0,r_0/2)$ such that
$\lim_{n\to \infty} \la_n=0$ and
\begin{equation*}
  \int_{\partial B_r}(|\nabla  W^{\la_n}|^2 \, +| W^{\la_n}|^2) \, dS
  >
  M \int_{B_r\setminus \tilde{\Gamma}}(|\nabla  W^{\la_n}|^2 \,
  +| W^{\la_n}|^2) \, dy
\end{equation*}
for any $n \in \mathbb{N}$ and $r\in [1,2]\setminus
\frac1{\lambda_n}\mathcal M$, and hence for
  a.e. $r\in[1,2]$. Hence 
\begin{equation*}
	g'_{\la_n}(r)> M g_{\la_n}(r) \quad \text{for any } n \in \mathbb{N} \text{ and a.e. } r \in [1,2].
\end{equation*}
An integration in $[1,2]$ yields 
\begin{equation*}
	\limsup_{n \to \infty} g_{\la_n}(1) \le e^{-M}\limsup_{n \to \infty} g_{\la_n}(2)
\end{equation*}
hence 
\begin{equation*}
	\liminf_{\la \to 0^+} g_{\la}(1) \le e^{-M}\limsup_{\la \to 0^+} g_{\la}(2).
\end{equation*}
In view of \eqref{eq:32}, letting $M \to \infty $ we conclude that 
\begin{equation*}
	\liminf_{\la \to 0^+}\int_{B_1 \setminus \tilde{\Gamma}}(|\nabla  W^\la|^2 \, +| W^{\la}|^2) \, dy =0.
\end{equation*}
 Then there exists a
sequence $\{\rho_{n}\}_{k \in \mathbb{N}}$ such that
$ W^{\rho_{n}} \to 0 $ strongly in
$H^1(B_1 \setminus \tilde{\Gamma})$ as $n\to\infty$.  Due to the
continuity of the trace operator $\gamma_1$ defined in Proposition
\ref{trace r} and \eqref{mu-O}, this is in
contradiction with \eqref{wla bounded boundary}.
\end{proof}

\begin{proposition}\label{wla nabla boundary bounded prop}
There exists  $\overline M>0$ such that 
\begin{equation}\label{wla nabla boundary bounded}
	\int_{\mathbb{S}^{N-1}} |\nabla	 W^{\la T_\la}|^2 \, dS \le \overline M \quad \text{for all } \la \in \left(0,\min\left\{\frac{r_0}{2}, \lambda_0\right\}\right).
\end{equation}
\end{proposition}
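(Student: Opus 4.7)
The plan is to relate $W^{\lambda T_\lambda}$ restricted to $\mathbb{S}^{N-1}$ to $W^{\lambda}$ restricted to $\partial B_{T_\lambda}$ via a change of scale, and then apply Proposition \ref{nablaboundarybounded} together with the doubling estimate \eqref{doub H} and the boundedness already established in Proposition \ref{wla bounded}.

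First, I would derive the rescaling identity. By the very definition \eqref{wla},
\[
W^{\lambda T_\lambda}(y)=\frac{U(\lambda T_\lambda y)}{\sqrt{H(\lambda T_\lambda)}}
=\sqrt{\frac{H(\lambda)}{H(\lambda T_\lambda)}}\;W^{\lambda}(T_\lambda y),
\]
so that, taking classical gradients,
\[
\nabla W^{\lambda T_\lambda}(y)=T_\lambda\sqrt{\frac{H(\lambda)}{H(\lambda T_\lambda)}}\;(\nabla W^{\lambda})(T_\lambda y).
\]
Making the substitution $z=T_\lambda y$ on the sphere (which multiplies the surface measure by $T_\lambda^{N-1}$), I would obtain
\[
\int_{\mathbb{S}^{N-1}} |\nabla W^{\lambda T_\lambda}|^2\,dS
=\frac{H(\lambda)}{H(\lambda T_\lambda)}\;T_\lambda^{3-N}\int_{\partial B_{T_\lambda}}|\nabla W^{\lambda}|^2\,dS.
\]
The choice $\lambda T_\lambda\notin\mathcal M$ in Proposition \ref{nablaboundarybounded} guarantees that the right-hand side is well defined in the sense of Remark \ref{rem:gradsense}.

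Next, I would estimate each factor. The ratio $H(\lambda)/H(\lambda T_\lambda)$ is uniformly bounded by $C_1$ thanks to \eqref{doub H}, and $T_\lambda^{3-N}\le\max\{1,2^{3-N}\}$ since $T_\lambda\in[1,2]$. The boundary integral is controlled by Proposition \ref{nablaboundarybounded}:
\[
\int_{\partial B_{T_\lambda}}|\nabla W^{\lambda}|^2\,dS
\le M\int_{B_{T_\lambda}\setminus\tilde\Gamma}(|\nabla W^{\lambda}|^2+|W^{\lambda}|^2)\,dy
\le M\int_{B_{2}\setminus\tilde\Gamma}(|\nabla W^{\lambda}|^2+|W^{\lambda}|^2)\,dy,
\]
and the last quantity is bounded uniformly in $\lambda\in(0,r_0/2)$ by \eqref{eq:32} (which in turn follows from Proposition \ref{wla bounded} and the doubling estimates \eqref{doub wla}--\eqref{doub nabla wla}). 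Combining these three bounds yields \eqref{wla nabla boundary bounded} with
\[
\overline M:=C_1\max\{1,2^{3-N}\}\,M\,\sup_{\lambda\in(0,r_0/2)}\int_{B_{2}\setminus\tilde\Gamma}(|\nabla W^{\lambda}|^2+|W^{\lambda}|^2)\,dy.
\]

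There is no real obstacle here beyond bookkeeping; the only subtle point is that the gradient on $\partial B_{T_\lambda}$ must be interpreted in the trace sense of Remark \ref{rem:gradsense}, which is precisely why Proposition \ref{nablaboundarybounded} was stated with the restriction $\lambda T_\lambda\notin\mathcal M$. Once this is observed, the argument is a direct rescaling combined with results already proved.
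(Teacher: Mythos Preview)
Your proof is correct and follows essentially the same approach as the paper: both derive the rescaling identity $\int_{\mathbb{S}^{N-1}}|\nabla W^{\lambda T_\lambda}|^2\,dS = T_\lambda^{3-N}\frac{H(\lambda)}{H(\lambda T_\lambda)}\int_{\partial B_{T_\lambda}}|\nabla W^{\lambda}|^2\,dS$, control the height ratio by \eqref{doub H}, and bound the boundary integral via Proposition~\ref{nablaboundarybounded}. The only cosmetic difference is in the last step: you enlarge $B_{T_\lambda}$ to $B_2$ and invoke the uniform $H^1(B_2\setminus\tilde\Gamma)$-bound on $W^\lambda$, while the paper instead applies \eqref{doub wla}--\eqref{doub nabla wla} to pass from $W^\lambda$ on $B_{T_\lambda}$ to $W^{T_\lambda\lambda}$ on $B_1$ and then uses Proposition~\ref{wla bounded} directly; both routes rest on the same ingredients.
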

\begin{proof}
Since 
\begin{equation*} 
	\int_{\mathbb{S}^{N-1}} |\nabla	 W^{\la T_\la}|^2 \, dS =\frac{\la^2T_\la^{3-N}}{H(\la T_\la)}	\int_{\partial B_{T_\la}} |\nabla	U(\la y)|^2 \, dS = 
	T_\la^{3-N}\frac{H(\la)}{H(\la T_\la)}\int_{\partial B_{T_\la}} |\nabla  W^\la|^2 \, dS,
\end{equation*}
then, by \eqref{doub H}, \eqref{doub wla}, \eqref{doub nabla wla},
\eqref{nablaboundaryboundedineq}, and the fact that $1\leq T_\la \le 2$, 
for any $\la \in \left(0,\min\left\{\frac{r_0}{2},\lambda_0\right\}\right)$ we have that
\begin{align*} 
  \int_{\mathbb{S}^{N-1}} |\nabla	 W^{\la T_\la}|^2 \, dS
  &\le 2C_1 M \int_{B_{T_\la} \setminus \tilde{\Gamma}} (|\nabla  W^\la|^2+| W^\la|^2) \, dy\\ 
  &\le 2^{N+1}C^2_1 M\int_{B_1 \setminus \tilde{\Gamma}}(|\nabla  W^{T_\la\la}|^2+| W^{T_\la\la}|^2)\, dy.
\end{align*}
Therefore we  conclude thanks to Proposition \ref{wla bounded}.
\end{proof}

Thanks to the estimates established above, we can now
  prove a first blow-up result.
\begin{proposition} \label{u-asymptotic}
  Let  $u \in H^1(B_R\setminus \Gamma)$, $u\not\equiv0$,  be a non-trivial
  weak solution to
  \eqref{op},  with $\Gamma$ defined in  \eqref{Gamma
  2}--\eqref{Gamma} and $f$ satisfying either \eqref{h1} or
   \eqref{h2}, and let
  $U=u \circ F$ be the corresponding solution to
  \eqref{straightenedproblem}.
  Let $\gamma$ be as in \eqref{limit gamma }.
Then 
\begin{equation} \label{gamma as k0/2}
	\text{there exists } k_0 \in \mathbb{N} \text{ such that } \gamma=\frac{k_0}{2}.
\end{equation}
 For  any sequence $\{\la_n\}_{n \in \mathbb{N}}$ with $\lim_{n\to
    \infty}\la_n=0$ there exists a subsequence $\{\la_{n_k}\}_{k \in
    \mathbb{N}}$ and  an eigenfunction $\Psi$ of problem
  \eqref{sphereprob} associated to the eigenvalue $\mu_{k_0}$ such
  that $\norm{\Psi}_{L^2(\mathbb{S}^{N-1})}=1$ and
\begin{equation} \label{u asymptotic}
	\frac{U(\la_{n_k} y)}{\sqrt{H(\la_{n_k})}} \to |y|^\gamma \Psi\left(\frac{y}{|y|}\right) \quad \text{strongly in } H^1(B_1 \setminus \tilde \Gamma).
\end{equation}
\end{proposition}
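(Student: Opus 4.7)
\medskip
\noindent\textbf{Proof plan.} I would establish the statement by a blow-up argument combining the compactness results of Propositions \ref{wla bounded}, \ref{nablaboundarybounded} and \ref{wla nabla boundary bounded prop} with the convergence of the Almgren quotient from Proposition \ref{existence limit gamma} and the spectral analysis of Section \ref{sec:neum-eigenv-mathbbsn}.

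Given $\lambda_n\to 0^+$, Proposition \ref{wla bounded} together with the compactness of the trace operator (Proposition \ref{trace r}) yields, along a subsequence still denoted $\{\lambda_n\}$, a function $W\in H^1(B_1\setminus\tilde\Gamma)$ with $W^{\lambda_n}\rightharpoonup W$ weakly in $H^1(B_1\setminus\tilde\Gamma)$ and strongly in $L^2(B_1)$ and $L^2(\mathbb{S}^{N-1})$. Passing to the limit in \eqref{wla equa1}, using $A(\lambda_n y)\to\mathop{\rm Id}_N$ uniformly on $B_1$ from \eqref{A-O} and the vanishing of the scaled potential (since $\eta_{\lambda_n^2\tilde f(\lambda_n\cdot)}(1)\to 0$ by \eqref{def eta} and Remark \ref{H2 then N/2}), one obtains that $W$ is a weak solution of $-\Delta W=0$ in $B_1\setminus\tilde\Gamma$ with homogeneous Neumann conditions on $\tilde\Gamma\cap B_1$. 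Passing to the limit in \eqref{wla bounded boundary} via \eqref{mu-O} gives $\int_{\mathbb{S}^{N-1}}W^2\,dS=1$, so $W\not\equiv 0$.

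A change of variables $y=\lambda z$ in \eqref{H}, \eqref{E} produces the scaling identity
\begin{equation*}
\mathcal{N}(\lambda r)=\frac{\int_{B_r\setminus\tilde\Gamma}\bigl(A(\lambda z)\nabla W^\lambda\cdot\nabla W^\lambda-\lambda^2\tilde f(\lambda z)(W^\lambda)^2\bigr)\,dz}{r^{-1}\int_{\partial B_r}\mu(\lambda\cdot)(W^\lambda)^2\,dS}.
\end{equation*}
The uniform gradient-trace bounds of Propositions \ref{nablaboundarybounded}--\ref{wla nabla boundary bounded prop}, combined with the Coarea Formula and a diagonal extraction, allow one to select a.e.\ radii $r$ at which $\int_{\partial B_r}|\nabla W^{\lambda_n}|^2\,dS$ stays bounded; on such radii the integration-by-parts identity of Proposition \ref{p:intparts} rewrites the numerator above as $r^{2-N}\int_{\partial B_r}W^\lambda\bigl(A(\lambda\cdot)\nabla W^\lambda\cdot\nu\bigr)\,dS$, which passes to the limit $r^{2-N}\int_{\partial B_r}W\,\partial_\nu W\,dS=E_W(r)$ by strong $L^2$-convergence of $W^{\lambda_n}$ and weak $L^2$-convergence of $\nabla W^{\lambda_n}$ on the sphere. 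The denominator likewise converges to $H_W(r):=r^{1-N}\int_{\partial B_r}W^2\,dS$. Since $\mathcal{N}(\lambda_n r)\to\gamma$ for every $r>0$ by Proposition \ref{existence limit gamma}, I would deduce $\mathcal{N}_W(r):=E_W(r)/H_W(r)=\gamma$ for a.e. $r$, and by monotonicity of $\mathcal{N}_W$ (classical Almgren formula for harmonic functions with Neumann conditions on the cracked domain, i.e.\ $\mathcal V\geq 0$ in \eqref{v} applied with $A=\mathop{\rm Id}_N$ and no potential) one gets $\mathcal{N}_W\equiv\gamma$ on $(0,1]$. The ensuing equality case of Cauchy--Schwarz forces $W(r\theta)=r^\gamma\Psi(\theta)$ for some $\Psi\in H^1(\mathbb{S}^{N-1}\setminus\Sigma)$ with $\|\Psi\|_{L^2(\mathbb{S}^{N-1})}=1$.

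Substituting this ansatz into $-\Delta W=0$ with Neumann conditions on $\tilde\Gamma$ gives $-\Delta_{\mathbb{S}^{N-1}}\Psi=\gamma(\gamma+N-2)\Psi$ on $\mathbb{S}^{N-1}\setminus\Sigma$ with Neumann conditions on $\Sigma$, so $\Psi$ solves \eqref{sphereprob} with eigenvalue $\gamma(\gamma+N-2)$; by Proposition \ref{eigen}(ii) there exists $k_0\in\mathbb{N}$ with $\gamma(\gamma+N-2)=k_0(k_0+2N-4)/4$, which together with $\gamma\geq 0$ (Proposition \ref{existence limit gamma}) forces $\gamma=k_0/2$, establishing \eqref{gamma as k0/2}. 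Finally, the homogeneity, the eigenvalue identity $\int_{\mathbb{S}^{N-1}}|\nabla_{\mathbb{S}^{N-1}}\Psi|^2\,dS=\mu_{k_0}$ and the normalization yield, by direct polar computation, $\int_{B_1\setminus\tilde\Gamma}|\nabla W|^2\,dy=\gamma$; this matches the limit $\int_{B_1}|\nabla W^{\lambda_n}|^2\,dy\to\gamma$ obtained by evaluating the scaling identity at $r=1$ (using \eqref{wla bounded boundary} for the denominator and vanishing of the potential for the numerator), which combined with weak $H^1$-convergence promotes to strong convergence of the gradients in $L^2(B_1)$, establishing \eqref{u asymptotic}. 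The main obstacle is the passage to the limit in the numerator of the scaling identity: only weak $H^1$-convergence of $W^\lambda$ is a priori available, and the rigorous identification $\mathcal{N}_W\equiv\gamma$ crucially relies on extracting pointwise trace control from the uniform bounds of Proposition \ref{nablaboundarybounded} on the carefully chosen radius $T_\lambda\in[1,2]$.
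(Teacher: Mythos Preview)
Your overall architecture---weak limit, harmonic limit problem, $\mathcal N_W\equiv\gamma$, equality in Cauchy--Schwarz forcing homogeneity, spectral quantization, then upgrade to strong convergence---matches the paper. The gap is in your step identifying $\mathcal N_W(r)=\gamma$. You write that Propositions \ref{nablaboundarybounded}--\ref{wla nabla boundary bounded prop} ``combined with the Coarea Formula and a diagonal extraction, allow one to select a.e.\ radii $r$'' with uniform gradient-trace bounds. But those propositions give a bound only at the \emph{$\lambda$-dependent} radius $T_\lambda\in[1,2]$; they say nothing about any fixed $r$. What remains from Coarea is merely that $r\mapsto\int_{\partial B_r}|\nabla W^{\lambda_n}|^2\,dS$ is bounded in $L^1(0,2)$, and $L^1$-boundedness does not, in general, yield a single subsequence bounded at a.e.\ fixed radius. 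A diagonal argument over a countable dense set of radii could perhaps be made rigorous, but that is not what you wrote, and the appeal to Propositions \ref{nablaboundarybounded}--\ref{wla nabla boundary bounded prop} is then irrelevant. As written, the passage to $\mathcal N_W\equiv\gamma$ is not justified, and since your strong-convergence step relies on the computation $\int_{B_1}|\nabla W|^2=\gamma$ (which needs homogeneity, which needs $\mathcal N_W\equiv\gamma$), the whole chain is suspended on this point.

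The paper resolves this by reversing the order: it works with the \emph{auxiliary} sequence $W^{\lambda_n T_{\lambda_n}}$, for which Proposition \ref{wla nabla boundary bounded prop} gives a gradient bound at the \emph{fixed} radius $1$. This yields a weak $L^2(\partial B_1)$-limit $h$ of $A(\lambda T_\lambda\cdot)\nabla W^{\lambda T_\lambda}\cdot\nu$; testing the scaled boundary identity (from Proposition \ref{p:intparts}) with $\phi=W^{\lambda T_\lambda}$ then gives $\int_{B_1}A\nabla W^{\lambda T_\lambda}\cdot\nabla W^{\lambda T_\lambda}\to\int_{\partial B_1}hW=\int_{B_1}|\nabla W|^2$, i.e.\ strong $H^1$-convergence of $W^{\lambda T_\lambda}$ \emph{before} any frequency computation. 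With strong convergence in hand, $\mathcal N_W(r)=\lim\mathcal N(\lambda_{n_k}T_{\lambda_{n_k}}r)=\gamma$ is immediate for every $r$. There is then a further (non-trivial) step you do not mention: one must pass from strong convergence of $W^{\lambda_n T_{\lambda_n}}$ to that of $W^{\lambda_n}$ itself, which the paper does by extracting $T_{\lambda_{n_k}}\to T$, identifying the weak limit of $W^{\lambda_{n_k}}$ as $\sqrt{\zeta}\,W(\cdot/T)$, and using the already-established homogeneity of $W$ together with $\int_{\partial B_1}\tilde W^2=1$ to show this equals $W$. Your final norm-matching argument ($\int|\nabla W^{\lambda_n}|^2\to\gamma=\int|\nabla W|^2$) is in fact a clean alternative to this last transfer step, but it only becomes available \emph{after} $\mathcal N_W\equiv\gamma$ and homogeneity are secured.
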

\begin{proof}
  Let $ W^\la$ be as in \eqref{wla} for any
  $\la \in\left(0,\min\left\{\frac{r_0}{2}, \lambda_0\right\}\right)$
  and let us consider a sequence $\{\la_n\}_{n \in \mathbb{N}}$ such
  that $\lim_{n\to \infty}\la_n=0$. From Proposition \ref{wla bounded}
  $\{ W^{\la T_\la}:\la \in\left(0,\min\left\{\frac{r_0}{2},
      \lambda_0\right\}\right)\}$ is bounded in
  $H^1(B_1 \setminus \tilde{\Gamma})$.  Therefore there exists a
  subsequence
  $\{ W^{\la_{n_k} T_{\la_{n_k}}}\}_{k \in \mathbb{N}} \subset H^1(B_1
  \setminus \tilde{\Gamma})$ and a function
  $ W \in H^1(B_1 \setminus \tilde{\Gamma})$ such that
  $ W^{\la_{n_k} T_{\la_{n_k}}} \rightharpoonup W$ weakly in
  $H^1(B_1 \setminus \tilde{\Gamma})$.  By compactness of the trace
  operator $\gamma_1$ (see Proposition \ref{trace r}),
 \eqref{mu-O}, and \eqref{wla bounded boundary}, it
  follows that 
  \begin{equation}\label{eq:41}
    \int_{\partial B_1} W^2\, dS =1
  \end{equation}
and so $W\not\equiv 0$ on $B_1 \setminus \tilde{\Gamma}$.

By H\"older's inequality and \eqref{Found ineq} we have that, for
every $\phi\in H^1(B_1\setminus\tilde\Gamma)$,
\begin{multline}\label{eq:33}
	\left|\la^2\int_{B_1}\tilde f(\la y ) W^{\la}(y) \phi(y) \,
          dy\right|	\\
 \le 
	\la^2\eta_{\tilde f(\la\cdot)}(1) 
	 \left( \int_{B_1 \setminus \tilde\Gamma}|\nabla W^{\la}|^2 \, dy+
	\int_{\partial B_1} |W^{\la}|^2 \, dS\right)^{\frac{1}{2}} \left( \int_{B_1 \setminus \tilde\Gamma}|\nabla \phi|^2 \, dy+
	\int_{\partial B_1} \phi^2 \, dS\right)^{\frac{1}{2}}.
    \end{multline}
By \eqref{def eta}  and a change of variables we have
that
\begin{align}\label{eq:34}
	\la^2\eta_{\tilde f(\la\cdot)}(1)&=S_{N,q_\epsilon} \la^2
	\left(\int_{B_1}|\tilde f(\la y)|^{\frac{N}{2}+\epsilon} \,
          dy\right)^\frac{2}{N+2\epsilon} \\
\notag&=S_{N,q_\epsilon} \la^\frac{4 \epsilon }{N+2
                \epsilon}\|\tilde
                f\|_{L^{\frac{N}{2}+\epsilon}(B_\lambda)}\to
                0\quad\text{as }\lambda\to0^+.
              \end{align}
From \eqref{eq:33}, \eqref{eq:34}, the boundedness of
  $\{W^\lambda\}$ in $H^1(B_1\setminus\tilde\Gamma)$ (established in
  Proposition \ref{wla bounded}) and of the traces (following from
  Proposition \ref{trace r}), we deduce that
\begin{equation}\label{eq:37}
\lim_{k \to \infty} \lambda_{\la_{n_k}
  T_{\la_{n_k}}}^2\int_{B_{1}}\tilde f({\la_{n_k} T_{\la_{n_k}}} y )
W^{\la_{n_k} T_{\la_{n_k}}}(y) \phi(y) \, dy=0,
\end{equation}
 for
every $\phi\in H^1(B_1\setminus\tilde\Gamma)$.

Let $\phi \in H_{0,\partial B_1}^1(B_1 \setminus \tilde{\Gamma})$.
We can test \eqref{wla equa1} with $\phi$  to obtain
\begin{multline}\label{eq:35}
\int_{B_{1} \setminus \tilde \Gamma}A(\la_{n_k}
  T_{\la_{n_k}} y)\nabla W^{\la_{n_k}
  T_{\la_{n_k}}}(y) \cdot \nabla \phi(y) \, dy \\=(\la_{n_k}
T_{\la_{n_k}})^2\int_{B_{1}}\tilde f({\la_{n_k} T_{\la_{n_k}}} y )
W^{\la_{n_k} T_{\la_{n_k}}}(y) \phi(y) \, dy,
\end{multline}
for any $k \in \mathbb{N}$.  Since
$ W^{\la_{n_k} T_{\la_{n_k}}} \rightharpoonup W$ weakly in
$H^1(B_1 \setminus \tilde{\Gamma})$, by \eqref{A-O} we have
that
\begin{equation}\label{eq:36}
\lim_{k\to \infty}
\int_{B_{1} \setminus \tilde \Gamma}A(\la_{n_k}
  T_{\la_{n_k}} y)\nabla W^{\la_{n_k}
  T_{\la_{n_k}}}(y) \cdot \nabla \phi(y) \, dy=\int_{B_{1} \setminus
  \tilde{\Gamma}}\nabla  W \cdot \nabla \phi\, dy.
\end{equation}
Therefore, for any $\phi \in H^1_{0,\partial B_1}(B_1\setminus
\tilde{\Gamma})$ we can pass to the limit as $k\to\infty$
  in \eqref{eq:35} thus obtaining, in view of \eqref{eq:36} and
  \eqref{eq:37},
\[
\int_{B_1\setminus \tilde{\Gamma}}  \nabla   W \cdot \nabla \phi \,
dy=0,
\] 
i.e. $ W$ is a weak solution of 
\begin{equation}\label{wproblem}
	\begin{cases}
          -\Delta W =0, &\text{ on } B_1\setminus \tilde{\Gamma},\\
          \dfrac{\partial^+W}{\partial\nu^+}=\dfrac{\partial^-W}{\partial\nu^-}=0,
          & \text{ on } \tilde{\Gamma}.
	\end{cases}
\end{equation}
We note that, by classical elliptic regularity theory, $W$ is smooth in $B_1\setminus \tilde{\Gamma}$.

In view of \eqref{wla} and Propositions \ref{nablaboundarybounded} and
\ref{p:intparts}, by scaling  we have that, for every $\phi \in H^1(B_1\setminus
\tilde{\Gamma})$,   
\begin{multline} \label{aproxequationboundry}
	\int_{B_{1} \setminus \tilde\Gamma} A(\la_{n_k}
          T_{\la_{n_k}}y)
\nabla W^{\la_{n_k} T_{\la_{n_k}}}(y) \cdot \nabla \phi(y) \, dy  \\
	-(\la_{n_k} T_{\la_{n_k}})^2\int_{B_{1}}\tilde
                f({\la_{n_k} T_{\la_{n_k}}} y ) W^{\la_{n_k}
                T_{\la_{n_k}}}(y) \phi(y) \, dy \\=\int_{\partial B_1} 
(A(\la_{n_k}
          T_{\la_{n_k}}y)
\nabla W^{\la_{n_k} T_{\la_{n_k}}}(y) \cdot\nu)\,  \phi(y) \, dS.
\end{multline}
Thanks to Proposition \ref{wla nabla boundary bounded prop}
and \eqref{A-oper-norm}
there exists a function $h \in L^2\partial B_1)$ such that
\begin{equation}\label{eq:38}
  (A(\la_{n_k}
          T_{\la_{n_k}}y)
          \nabla W^{\la_{n_k} T_{\la_{n_k}}}(y)
          \cdot\nu)\rightharpoonup h
        \quad\text{weakly in $L^2(\partial B_1)$,}
      \end{equation}
      up to a subsequence.
By the weak convergence 
$ W^{\la_{n_k} T_{\la_{n_k}}} \rightharpoonup W$ in
$H^1(B_1 \setminus \tilde{\Gamma})$, \eqref{A-O}, \eqref{eq:37}, and \eqref{eq:38}, passing to the limit
as $k \to \infty$ in \eqref{aproxequationboundry}, we obtain that 
\begin{equation}\label{eq:39}
  \int_{B_1 \setminus \tilde \Gamma} \nabla  W \cdot \nabla \phi \,
  dy = \int_{\partial B_1} h \phi \, dS
\end{equation}
for any  $\phi \in H^1(B_1\setminus \tilde{\Gamma})$.
From the compactness of the trace operator $\gamma_1$ (see
  Proposition \ref{trace r}) and \eqref{eq:38} it follows that
  \[
    \lim_{k \to \infty}
    \int_{\partial B_1} 
(A(\la_{n_k}
          T_{\la_{n_k}}y)
\nabla W^{\la_{n_k} T_{\la_{n_k}}}(y) \cdot\nu)\,  W^{\la_{n_k} T_{\la_{n_k}}}(y) \, dS
=\int_{\partial B_1} h W \, dS.
\]
Therefore, recalling estimates \eqref{eq:33}, \eqref{eq:34}, and the boundedness of
  $\{W^\lambda\}$ in $H^1(B_1\setminus\tilde\Gamma)$, choosing $\phi=
  W^{\la_{n_k} T_{\la_{n_k}}}$ in \eqref{aproxequationboundry} and
passing to the limit as $k \to \infty$,
we obtain that 
\begin{equation}\label{eq:40}
  \lim_{k \to \infty}\int_{B_1 \setminus  \tilde\Gamma} A(\la_{n_k}
          T_{\la_{n_k}}y)\nabla
  W^{\la_{n_k} T_{\la_{n_k}}}\cdot \nabla  W^{\la_{n_k}
    T_{\la_{n_k}}}\, dy = \int_{\partial B_1} h W \, dS.
\end{equation}
From \eqref{eq:39} and \eqref{eq:40} it follows that 
\[
  \lim_{k \to \infty}\int_{B_1 \setminus  \tilde\Gamma}  A(\la_{n_k}
          T_{\la_{n_k}}y)\nabla  W^{\la_{n_k} T_{\la_{n_k}}}\cdot \nabla  W^{\la_{n_k} T_{\la_{n_k}}} \, dy=
        \int_{B_1 \setminus \tilde \Gamma} |\nabla  W|^2\, dy
      \]
and so, thanks to \eqref{A-O},
\begin{equation}\label{wstrong}
	 W^{\la_{n_k} T_{\la_{n_k}}}\to  W \quad \text{strongly in } H^1(B_1 \setminus \tilde \Gamma). 
\end{equation}
For any $k \in \mathbb{N}$ and $r \in (0,1)$ let us define
\begin{align*}
  &E_k(r):=r^{2-N}\int_{B_{r} \setminus \tilde\Gamma}\!(  A(\la_{n_k}
  T_{\la_{n_k}}y)\nabla  W^{\la_{n_k} T_{\la_{n_k}}}\!\cdot\!
    \nabla  W^{\la_{n_k} T_{\la_{n_k}}}\\
  &\qquad\qquad\qquad-(\la_{n_k} T_{\la_{n_k}})^2 
  \tilde f({\la_{n_k} T_{\la_{n_k}}} y )| W^{\la_{n_k} T_{\la_{n_k}}}|^2 )\, dy,\\
  &H_k(r):=r^{1-N}\int_{\partial B_r}\mu({\la_{n_k}
    T_{\la_{n_k}}} y) | W^{\la_{n_k} T_{\la_{n_k}}}|^2 \, dS,
    \quad \text{and} \quad \mathcal{N}_k(r):=\frac{E_k(r)}{H_k(r)}.
\end{align*}
By a change of variables it is easy to verify that, for any $r \in (0,1)$,
\begin{equation}\label{eq:42}
  \mathcal{N}_k(r)=\frac{E_k(r)}{H_k(r)}=\frac{E(\la_{n_k}
    T_{\la_{n_k}}r)}{H(\la_{n_k}
    T_{\la_{n_k}}r)}=\mathcal{N}(\la_{n_k} T_{\la_{n_k}}r).
\end{equation}
For any $r \in (0,1)$, we also define 
\[
  H_{ W}(r):=r^{1-N}\int_{\partial B_r} | W|^2 \, dS,
  \quad
  E_{ W}(r):=r^{2-N}\int_{B_{r} \setminus \tilde \Gamma}
  |\nabla  W|^2 \, dy\quad \text{and} \quad \mathcal{N}_{
    W}(r):=\frac{E_{ W}(r)}{H_{ W}(r)}.
\]
The definition of $\mathcal{N}_{ W}$ is well posed. Indeed, if
$H_{ W}(r)=0$ for some $r \in (0,1)$, then we may test the equation
 \eqref{wproblem} on $B_r$ with $W$ and conclude that $W=0$ in
 $B_r$. Thanks to classical unique continuation principles
for harmonic functions,
this would imply that $W=0$ in $B_1$, thus contradicting \eqref{eq:41}.

Thanks to \eqref{wstrong},
\eqref{eq:33}-\eqref{eq:34} together with the boundedness of
  $\{W^\lambda\}$ in $H^1(B_1\setminus\tilde\Gamma)$, \eqref{A-O},
  \eqref{mu-O},
and Proposition \ref{existence limit gamma}, passing to the limit as
$k \to \infty$ in \eqref{eq:42} we obtain that
\begin{equation}\label{Nw}
	\mathcal{N}_{ W}(r)=\lim_{k \to \infty}\mathcal{N}_k(r)=\lim_{k \to \infty} \mathcal{N}(\la_{n_k} T_{\la_{n_k}}r)=\gamma \quad \text{for any }r \in (0,1).
\end{equation}
Then $\mathcal{N}_{ W}$ is constant in $(0,1)$. 
Following the proof of  Proposition \ref{N'} in the case $f \equiv 0$ and $g \equiv 0$ (where $g$ is the function defined
in \eqref{condition_g1}--\eqref{condition_g2}), so that $A=\mathop{\rm Id}_{N}$ 
and $\mu=1$, we obtain that
\[
0=\mathcal{N}_W'(r)\ge \frac{2r\left(\left(
\int_{\partial B_r}\left|\pd{ W}{\nu}\right|^2 \, dS\right)\left(\int_{\partial B_r}{ W^2}dS\right)-
	\left(\int_{\partial B_r} W\pd{ W}{\nu} \,
          dS\right)^2\right)}{\left(\int_{\partial B_r} W^2\,
        dS\right)^2} \ge 0
\]  
for a.e.  $r \in (0,1)$.
It follows that $\left(\int_{\partial B_r}\left|\pd{ W}{\nu}\right|^2
  \, dS\right)
\left(\int_{\partial B_r}{ W^2}dS\right)=\left(\int_{\partial B_r}
  W\pd{w}{\nu} \, dS\right)^2$ for a.e. $r \in (0,1)$, i.e.  equality holds
in the Cauchy-Schwartz inequality  for the vectors $W$ and $\pd{ W}{\nu}$ in $L^2(\partial B_r)$ for  a.e. $r \in (0,1)$. It follows that there exists a function 
$\zeta(r)$ such that 
\begin{equation}\label{weqautionrtheta}
	\pd{ W}{\nu}(r\theta)=\zeta(r)  W(r\theta) \quad \text{for any }\theta \in \mathbb{S}^{N-1} \setminus \Sigma \text{ and a.e. } r \in (0,1].
\end{equation}
Multiplying by $ W(r \theta)$ and integrating on $\mathbb{S}^{N-1}$ we obtain
\[\int_{\mathbb{S}^{N-1}}\pd{ W}{\nu}(\theta r) W(r \theta) \, dS=\zeta(r) \int_{\mathbb{S}^{N-1}}  W^2(\theta r) \, dS,\]
so that $\zeta(r)=\frac{H_{ W}'(r)}{2H_{ W}(r)}=\frac{\gamma}{r} $ by
Proposition \ref{p:derH} and \eqref{Nw}.  Integrating \eqref{weqautionrtheta}
between $r \in (0,1)$ and $1$ we
obtain that
\begin{equation*}
	 W(r\theta)=r^{\gamma}  W(1\theta)=r^{\gamma} \Psi(\theta) \quad \quad \text{for any }\theta \in \mathbb{S}^{N-1} \setminus \Sigma \text{ and any } r \in (0,1],
\end{equation*}
where $\Psi= W|_{\mathbb{S}^{N-1} \setminus \Sigma}$. Then
$\Psi \in H^1(\mathbb{S}^{N-1} \setminus \Sigma)$;
 furthermore, substituting $W(r\theta)=r^{\gamma}
  \Psi(\theta)$ in \eqref{wproblem} we find out that 
$\Psi$ is an eigenfunction of
\eqref{sphereprob}  with $(\gamma+N-2)\gamma$ as an
  associated eigenvalue. Hence by Proposition \ref{eigen} there exists
$k_0 \in \mathbb{N}$ such that
$(\gamma+N-2)\gamma=\frac{k_0(k_0+2N-4)}{4}$. Recalling
  from Proposition \ref{existence limit gamma} that $\gamma\geq0$, we
  then obtain \eqref{gamma as k0/2}.

To conclude the proof it is enough to show that $ W^{\la_{n_k}} \to W$
strongly in $H^1(B_1 \setminus \tilde \Gamma)$ (possibly
  along a subsequence). Since
$\{ W^{\la_{n_k}}\}_{k \in \mathbb{N}}$ is bounded in $H^1(B_1 \setminus \tilde \Gamma)$ by Proposition
\ref{wla bounded}, there exists a function
$\tilde{W} \in H^1(B_1 \setminus \tilde \Gamma)$ and $T \in [1,2]$
such that $ W^{\la_{n_k}} \rightharpoonup \tilde W$ weakly in
$H^1(B_1 \setminus \tilde \Gamma)$ and $T_{\la_k}\to T$, up to a
subsequence.  

Moreover, since 
$\{ W^{\la_{n_k} T_{\la_{n_k}}}\}_{k \in \mathbb{N}}$
and $\{|\nabla W^{\la_{n_k} T_{\la_{n_k}}}|\}_{k \in \mathbb{N}}$ converge strongly in
$L^2(B_1)$ by \eqref{wstrong}, they are dominated by a measurable $L^2(B_1)$-function,
up to a subsequence.  Similarly, thanks to \eqref{doub H}, we can
suppose that, up to a subsequence, the limit
\[\zeta:=\lim_{k \to \infty}\frac{H(\la_{n_k}T_{\lambda_{n_k}})}{H(\la_{n_k})}\] 
exists and it is finite and strictly positive. Then for
any $\phi \in C^{\infty}_c(B_1)$ we have that
\begin{align*}
  &\lim_{k \to \infty}\int_{B_1}  W^{\la_{n_k}}(y) \phi(y) \, dy=
\lim_{k \to \infty} T^N_{\la_{n_k}} \int_{B_{T^{-1}_{\la_{n_k}}}}
    W^{\la_{n_k}}(T_{\la_{n_k}}y)
 \phi(T_{\la_{n_k}}y) \, dy\\
  &=\lim_{k \to \infty} T^N_{\la_{n_k}}
\sqrt{\frac{H(\la_{n_k}T_{\la_{n_k}})}{H(\la_{n_k})}}
\int_{B_{T^{-1}_{\la_{n_k}}}} W^{T_{\la_{n_k}}\la_{n_k}}(y) \phi(T_{\la_{n_k}}y) \, dy\\
  &=T^N \sqrt{\zeta}\int_{B_{T^{-1}}}  W(y) \phi(Ty) \, dy=
 \sqrt{\zeta}\int_{B_1} W(y/T) \phi(y) \, dy,
\end{align*}
thanks to the Dominated Convergence Theorem. By density the same
holds for any $\phi \in L^2(B_1)$. It follows that
$W^{\la_{n_k}} \rightharpoonup \sqrt{\zeta}W(\cdot/T)$ weakly in
$L^2(B_1)$. Hence, by uniqueness of the weak limit, we have that
$\tilde W(\cdot)=\sqrt{\zeta} W(\cdot/T)$ and
$ W^{\la_{n_k}} \rightharpoonup \sqrt{\zeta} W(\cdot/T)$ weakly in
$H^1(B_1 \setminus \tilde \Gamma)$. Furthermore
\begin{align*}
  &\lim_{k \to \infty}\int_{B_1\setminus \tilde \Gamma} |\nabla  W^{\la_{n_k}}(y)|^2\,
    dy=
    \lim_{k \to \infty} T^N_{\la_{n_k}}
    \int_{B_{T^{-1}_{\la_{n_k}}}\setminus \tilde \Gamma}|\nabla 
    W^{\la_{n_k}}(T_{\la_{n_k}}y)|^2 \, dy\\
  &=\lim_{k \to \infty}
    T _{\la_{n_k}}^{N{-2}}\frac{H(\la_{n_k}T_{\la_{n_k}})}
    {H(\la_{n_k})}\int_{B_{T^{-1}_{\la_{n_k}}}\setminus \tilde \Gamma}|\nabla
    W^{T_{\la_{n_k}}\la_{n_k}}(y)|^2 
    dy\\
  &=T^{N-2} \zeta\int_{B_{T^{-1}}\setminus
    \tilde \Gamma} |\nabla W(y)|^2 dy
=\int_{B_1\setminus \tilde \Gamma} | 
    \sqrt{\zeta}\nabla  (W(\cdot/T))|^2\, dy.
\end{align*}
Then we can conclude that
$W^{\la_{n_k}} \to \tilde W=\sqrt{\zeta}W(\cdot/T)$ strongly in
$H^1(B_1 \setminus\tilde \Gamma)$.
Moreover, 
 by compactness of the trace
  operator $\gamma_1$ (see Proposition \ref{trace r}),
  \eqref{mu-O}, and \eqref{wla bounded boundary},
we deduce that
$\int_{\partial B_1}\tilde W^2 \, dS=1$. Then, since
$ W(r \theta)=r^\frac{k_0}{2}\Psi(\theta)$, we deduce that
\[
\tilde W(r \theta)=\sqrt{\zeta} W\left(\frac{r}{T}\theta\right)=\left(\frac{\zeta}{T^{k_0}}\right)^{\frac{1}{2}} {r}^{\frac{k_0}{2}}\Psi(\theta)=\left(\frac{\zeta}{T^{k_0}}\right)^{\frac{1}{2}}  W(r \theta)\]
and 
\[
1=\int_{\partial B_1}\tilde W^2 \,
dS=\frac{\zeta}{T^{k_0}}\int_{\partial B_1} W^2 \,
dS=\frac{\zeta}{T^{k_0}},
\]
thanks to \eqref{eq:41}.
Therefore $ W=\tilde{W}$ and  the proof is complete.
\end{proof}

We are now in position of prove Theorem \ref{t:ucp-crack}.
\begin{proof}[Proof of Theorem \ref{t:ucp-crack}]
  Let us assume that   $\mathop{\rm Tr}^+_\Gamma u(z)=O(|z|^k)$ as $|z| \to 0^+$,
  $z\in\Gamma$, for all $k \in \mathbb{N}$ (a similar argument works
  under the assumption  $\mathop{\rm Tr}^-_\Gamma
  u(z)=O(|z|^k)$). Letting $U=u\circ F$, by the properties of the
  diffeomorphism $F$ described in Proposition \ref{diffeomorphism}, we
  have that $\mathop{\rm Tr}^+_{\tilde \Gamma} U(z)=O(|z|^k)$ as $|z|
  \to 0^+$, so that, for all $k\in \mathbb{N}$,
  \begin{equation}\label{eq:14}
    \|\lambda^{-k}\mathop{\rm Tr}\nolimits^+_{\tilde
      \Gamma}U(\lambda\cdot)\|_{L^2(B_1\cap\tilde\Gamma)}\to
    0\quad\text{as }\lambda\to0^+.
  \end{equation}
On the other hand, if, by contradiction,  $u\not\equiv0$, by Proposition
\ref{u-asymptotic} and classical trace theorems there exist
$k_0\in{\mathbb N}$, a sequence $\lambda_n\to 0^+$, and 
  an eigenfunction $\Psi$ of problem
  \eqref{sphereprob} such that
  \begin{equation}\label{eq:17}
    \lim_{n\to\infty}	\frac{\|\mathop{\rm Tr}\nolimits^+_{\tilde
      \Gamma}U(\la_{n}
    \cdot)\|_{L^2(B_1\cap\tilde\Gamma)}}{\sqrt{H(\la_{n})}} =
 \left \|\mathop{\rm Tr}\nolimits^+_{\tilde
      \Gamma}\left(|y|^\gamma
    \Psi\left(\tfrac{y}{|y|}\right)\right)\right\|_{L^2(B_1\cap\tilde\Gamma)}\neq0,
\end{equation}
where the above limit is nonzero thanks to Remark \ref{rem:eigenfunctionnonzero}. 
Combining \eqref{eq:14} and \eqref{eq:17} we obtain that
\[
  \lim_{n\to\infty}
  \frac{\sqrt{H(\la_{n})}}{\lambda_n^k}=0\quad\text{for all
  }k\in\mathbb N,
  \]
thus contradicting estimate \eqref{H lower bound}.
\end{proof}

\section{Asymptotics of the height function $H(\lambda)$ as
  $\lambda\to0^+$, when $N\geq3$}
\label{sec:colorr-height-funct}

In dimension $N\geq 3$, we can further specify the
   behaviour of $U(\lambda\cdot)$ as $\lambda\to0^+$, deriving
  the asymptotics of the function  $H(\lambda)$ appearing as a
  normalization factor in the blowed-up family \eqref{wla}. 
Let $\{Y_{k,i}\}_{k\in\mathbb{N},
  i=1,\dots,N_k}$ be the basis of $L^2(\mathbb{S}^{N-1})$ given by Proposition
\ref{eigen}.
Let  $N\geq3$, $u \in H^1(B_R\setminus \Gamma)$  be a 
  weak solution to
  \eqref{op},  with $\Gamma$ defined in  \eqref{Gamma
  2}--\eqref{Gamma} and $f$ satisfying either \eqref{h1} or
   \eqref{h2}, and let
  $U=u \circ F$ be the corresponding solution to
  \eqref{straightenedproblem}.
  For any  $\la \in (0,r_0)$, $k \in \mathbb{N}$ and $i=1,\dots, N_k$
  we define
\begin{equation}\label{Fourier-coefficents}
\varphi_{k,i}(\la):=\int_{\mathbb{S}^{N-1}}U(\la\theta ) Y_{k,i}(\theta) \, dS	
\end{equation}
and 
\begin{align}\label{resto}
  \Upsilon_{k,i}(\la):=&-\int_{B_\la\setminus \tilde
                         \Gamma}(A-\mathop{\rm Id}\nolimits_N) \nabla U
                         \cdot
                         \frac{\nabla_{\mathbb{S}^{N-1}}Y_{k,i}(y/|y|)}{|y|}\, dy  \\
                       &+\int_{B_\la} \tilde{f}(y)U(y)Y_{k,i}(y/|y|)
                         \, dy+
                         \int_{\partial B_\la}(A-\mathop{\rm Id}\nolimits_N)
                         \nabla U \cdot \frac{y}{|y|}Y_{k,i}(y/|y|) \, dS. \notag 
\end{align}

\begin{proposition}
  Let $k_0$ be as in Proposition \ref{u-asymptotic}. Then, for any
  $i=1,\dots, N_{k_0}$ and $r \in (0,r_0]$,
\begin{align}\label{Fourier-coefficents-expansion}
\varphi_{k_0,i}(\la)&=\la^{\frac{k_0}{2}}\Big(r^{-\frac{k_0}{2}}\varphi_{k_0,i}(r)+\frac{2N+k_0-4}{2(N+k_0-2)}\int_{\la}^r s^{-N+1-\frac{k_0}{2}}\Upsilon_{k_0,i}(s) \, ds\\
&+\frac{k_0r^{-N+2-k_0}}{2(N+k_0-2)} \int_{0}^rs^{\frac{k_0}{2}-1} \Upsilon_{k_0,i}(s) \, ds\Big) +o(\la^{\frac{k_0}{2}})  \quad \text{as } \la \to 0^+.\notag
\end{align}
\end{proposition}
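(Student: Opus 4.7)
The plan is to derive an integro-differential identity for $\varphi_{k_0,i}$ by projecting the equation for $U$ onto $Y_{k_0,i}$, and then to solve the underlying Fuchsian ODE by variation of parameters around its indicial roots $k_0/2$ and $2-N-k_0/2$ (the two solutions of $\sigma(\sigma+N-2)=\mu_{k_0}$). The first step is to test the weak formulation of \eqref{straightenedproblem} against a radial cutoff multiplied by $Y_{k_0,i}(y/|y|)$. Since both $U$ and $Y_{k_0,i}$ satisfy homogeneous Neumann conditions (on $\tilde\Gamma$ and on $\Sigma$ respectively), integration by parts on $\mathbb{S}^{N-1}\setminus\Sigma$ against $-\Delta_{\mathbb{S}^{N-1}}Y_{k_0,i}=\mu_{k_0}Y_{k_0,i}$ produces no extra boundary terms, and a standard removal of the cutoff yields, for a.e.\ $\lambda\in(0,r_0)$, the identity
\begin{equation*}
\lambda^{N-1}\varphi_{k_0,i}'(\lambda) + \Upsilon_{k_0,i}(\lambda) = \mu_{k_0}\int_0^\lambda s^{N-3}\varphi_{k_0,i}(s)\,ds,
\end{equation*}
with $\Upsilon_{k_0,i}$ as in \eqref{resto}, collecting the $\tilde fU$ contribution together with the volume and surface remainders produced by integration by parts against $(A-\mathop{\rm Id}_N)\nabla U$.

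Next, the plan is to exploit the Fuchsian factorization $\bigl(\frac{d}{d\lambda}+\frac{N-1+k_0/2}{\lambda}\bigr)\bigl(\frac{d}{d\lambda}-\frac{k_0/2}{\lambda}\bigr)$ of the homogeneous operator. Setting $\psi(\lambda):=\varphi_{k_0,i}'(\lambda)-\frac{k_0}{2\lambda}\varphi_{k_0,i}(\lambda)=\lambda^{k_0/2}(\lambda^{-k_0/2}\varphi_{k_0,i})'$ and $J(\lambda):=\int_0^\lambda s^{N-2}\psi(s)\,ds$, one integration by parts on $\int_0^\lambda s^{N-3}\varphi_{k_0,i}(s)\,ds$ (using the algebraic identity $\mu_{k_0}/(N-2+k_0/2)=k_0/2$) recasts the relation above as
\begin{equation*}
\lambda^{N-1}\psi(\lambda)+\tfrac{k_0}{2}J(\lambda)+\Upsilon_{k_0,i}(\lambda)=0,
\end{equation*}
so that $J'(\lambda)=\lambda^{N-2}\psi(\lambda)$ yields the first-order linear ODE $(\lambda^{k_0/2}J(\lambda))'=-\lambda^{k_0/2-1}\Upsilon_{k_0,i}(\lambda)$. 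Integrating this ODE on $(\lambda,r)$, then integrating $(\lambda^{-k_0/2}\varphi_{k_0,i})'=\lambda^{-k_0/2}\psi(\lambda)$ on the same interval, and finally using Fubini on the resulting double integral, will produce exactly the three bracketed terms of \eqref{Fourier-coefficents-expansion} (the coefficient $1-\frac{k_0/2}{N+k_0-2}=\frac{2N+k_0-4}{2(N+k_0-2)}$ arises from combining the direct integral with its Fubini-transformed partner), together with a remainder of the form $\frac{k_0}{2(N+k_0-2)}\bigl(\lambda^{2-N}J(\lambda)-L\,r^{2-N-k_0}\lambda^{k_0/2}\bigr)$, where $L:=\lim_{\lambda\to 0^+}\lambda^{k_0/2}J(\lambda)$.

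The main obstacle will be to absorb this remainder into the $o(\lambda^{k_0/2})$ error, which is precisely the step that rules out the singular indicial mode $\lambda^{2-N-k_0/2}$ for $\varphi_{k_0,i}$. This is handled by the a priori estimates established earlier: Cauchy--Schwarz applied to \eqref{Fourier-coefficents} together with $\|Y_{k_0,i}\|_{L^2(\mathbb{S}^{N-1})}=1$ and the upper bound \eqref{H upper bound} (with $\gamma=k_0/2$ by \eqref{gamma as k0/2}) gives $|\varphi_{k_0,i}(\lambda)|=O(\lambda^{k_0/2})$, while the $H^1$-boundedness of the rescaled family from Proposition \ref{wla bounded}, rewritten via \eqref{wla}, yields $\|\nabla U\|_{L^2(B_\lambda\setminus\tilde\Gamma)}^2=O(\lambda^{N-2+k_0})$. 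Plugging these two bounds into \eqref{resto}, together with $|A(y)-\mathop{\rm Id}_N|=O(|y|)$ from \eqref{A-O} and the hypotheses on $\tilde f$, Cauchy--Schwarz produces the sharp decay $\Upsilon_{k_0,i}(\lambda)=O(\lambda^{N-1+k_0/2})$. A bootstrap on the first-order ODE for $\lambda^{k_0/2}J$ then gives $J(\lambda)=O(\lambda^{N-1+k_0/2})$, hence $L=0$ and $\lambda^{2-N}J(\lambda)=O(\lambda^{k_0/2+1})=o(\lambda^{k_0/2})$, closing the proof of \eqref{Fourier-coefficents-expansion}.
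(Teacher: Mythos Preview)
Your overall strategy coincides with the paper's: project the equation onto $Y_{k_0,i}$, reduce to a Fuchsian ODE with indicial roots $k_0/2$ and $2-N-k_0/2$, solve by two successive integrations, and show that the singular mode is absent. Your first identity $\lambda^{N-1}\varphi_{k_0,i}'(\lambda)+\Upsilon_{k_0,i}(\lambda)=\mu_{k_0}\int_0^\lambda s^{N-3}\varphi_{k_0,i}(s)\,ds$ is in fact a clean way to package the computation (it follows directly from Proposition~\ref{p:intparts} with $\phi(y)=Y_{k_0,i}(y/|y|)\in H^1(B_{r_0}\setminus\tilde\Gamma)$ since $N\ge 3$), and it has the pleasant feature that the integration constant the paper calls $c_{k_0,i}(r)$ is determined from the outset; the paper instead works with the second–order distributional ODE and then spends the Hardy argument \eqref{eq:11} to identify that constant via \eqref{eq:9}.

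There is, however, a genuine gap in your remainder estimate. The claimed pointwise bound $\Upsilon_{k_0,i}(\lambda)=O(\lambda^{N-1+k_0/2})$ does \emph{not} follow from Cauchy--Schwarz and the two a~priori bounds you quote: the third term in \eqref{resto} is the surface integral $\int_{\partial B_\lambda}(A-\mathop{\rm Id}_N)\nabla U\cdot\nu\,Y_{k_0,i}\,dS$, and Cauchy--Schwarz only dominates it by $C\lambda^{(N+1)/2}\|\nabla U\|_{L^2(\partial B_\lambda)}$. The estimate $\|\nabla U\|_{L^2(B_\lambda\setminus\tilde\Gamma)}^2=O(\lambda^{N-2+k_0})$ controls the \emph{radial average} of $\|\nabla U\|_{L^2(\partial B_\lambda)}^2$, not its pointwise value, so your bootstrap step ``$\Upsilon_{k_0,i}(\lambda)=O(\lambda^{N-1+k_0/2})\Rightarrow J(\lambda)=O(\lambda^{N-1+k_0/2})$'' is unjustified as stated. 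The paper handles exactly this point: in the computation leading to \eqref{eq:19} the surface contribution is first integrated in the radial variable (turning it into a volume integral via the Coarea formula) and only then estimated by Cauchy--Schwarz, yielding the $L^1$ bound $\int_0^r s^{-N+1-k_0/2}|\Upsilon_{k_0,i}(s)|\,ds\le C\,r^{4\epsilon/(N+2\epsilon)}$. This is precisely what is needed in \eqref{eq:12} to show $\int_0^\lambda s^{k_0/2-1}\Upsilon_{k_0,i}(s)\,ds=O(\lambda^{N-2+k_0+4\epsilon/(N+2\epsilon)})$, hence $\lambda^{2-N}J(\lambda)=o(\lambda^{k_0/2})$. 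Replacing your pointwise claim by this integrated estimate closes the argument.
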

\begin{proof}
  For any $k \in \mathbb{N}$ and any $i=1,\dots, N_k$ we consider the
  distribution $\zeta_{k,i}$ on $(0,r_0)$ defined as
\begin{align*}
\sideset{_{\mathcal{D}'(0,r_0)}}{_{\mathcal{D}(0,r_0)}}{\mathop{\langle
  \zeta_{k,i}, \omega
  \rangle}}&:=\int_0^{r_0} \omega(\la) 
\left(\int_{\mathbb{S}^{N-1}} \tilde f(\la \theta) U(\la \theta)
             Y_{m,k}(\theta) \, dS_{\theta}\right)
             \, d\la \\ 
&\qquad-\int_{B_{r_0} \setminus \tilde \Gamma} (A-\mathop{\rm
                           Id}\nolimits_N)
                           \nabla U \cdot \nabla\Big(|y|^{1-N}\omega(|y|) Y_{m,k}(y/|y|)\Big) \, dy,
\end{align*}
for any $\omega \in \mathcal{D}(0,r_0)$.

Since $\Upsilon_{k,i} \in L^1_{\rm loc}(0,r_0)$ by \eqref{resto},
we may  consider  its derivative in the sense of distributions. A direct calculation shows that
\begin{equation}\label{eq:7}
	\Upsilon_{k,i}'(\la)=\la^{N-1}\zeta_{k,i}(\la)
\end{equation}
in the sense of distributions on $(0,r_0)$.  From the definition of
$\zeta_{k,i}$, \eqref{straightenedproblem}, and the fact that $Y_{k,i}$
is a solution of \eqref{weak-eig} we deduce that
\[
-\varphi_{k,i}''(\la)-\frac{N-1}{\la}\varphi'_{k,i}(\la)+
\frac{\mu_k}{\la^2}\varphi_{k,i}(\la)= \zeta_{k,i}(\la)
\]
in the sense of distribution in $(0,r_0)$; the above equation can be
rewritten as   
\[
-(\la^{N-1+k}(\la^{-\frac{k}{2}}\varphi_{k,i}(\la))')'=
{\la^{N-1+\frac k2}}
\zeta_{k,i}(\la),
\]
thanks to \eqref{muk}.  Integrating the right-hand side of the
equation above by parts, since \eqref{eq:7} holds, we obtain that, for
every $r \in (0,r_0)$, $k \in \mathbb{N} $ and 
$i=1,\dots, N_{k}$ there exists a constant $c_{k,i}(r)$ such that
\[
(\la^{-\frac{k}{2}}\varphi_{k,i}(\la))'=-\la^{-N+1-\frac{k}{2}}\Upsilon_{k,i}(\la)-\frac{k}{2}\la^{-N+1-k}\left(c_{k,i}(r)+\int_{\la}^r
  s^{\frac{k}{2}-1}\Upsilon_{k,i}(s) \, ds\right)\]
in the sense of distribution on $(0,r_0)$. Then
$\varphi_{k,i}(\la) \in W^{1,1}_{\rm loc}(0,r_0)$ and a further
integration yields
 \begin{align}\label{eq:13}
\varphi_{k,i}(\la)&=\la^{\frac{k}{2}}\Bigg(r^{-\frac{k}{2}}\varphi_{k,i}(r)+\int_\la^r s^{-N+1-\frac{k}{2}}\Upsilon_{k,i}(s) \, ds\Bigg)\\
 \notag           &+\frac{k}{2}\la^{\frac{k}{2}}\Bigg(\int_\la^r s^{-N+1-k}\left(c_{k,i}(r)+\int_{s}^r t^{\frac{k}{2}-1}\Upsilon_{k,i}(t) \, dt\right) \, ds \\
 \notag           &=\la^{\frac{k}{2}}\left(r^{-\frac{k}{2}}\varphi_{k,i}(r)+\frac{2N+k-4}{2(N+k-2)}\int_\la^r s^{-N+1-\frac{k}{2}}\Upsilon_{k,i}(s) \, ds\right)\\
 \notag           &-\la^{\frac{k}{2}}\frac{kc_{k,i}(r)r^{-N+2-k}}{2(N+k-2)}+\frac{k \la^{-N+2-\frac{k}{2}}}{2(N+k-2)} \left(c_{k,i}(r)+\int_{\la}^r t^{\frac{k}{2}-1}\Upsilon_{k,i}(t) \, dt\right).
\end{align}
Now we claim that, if $k_0$ is as in Proposition \ref{u-asymptotic}, then 
\begin{equation}\label{eq:8}
\text{the function } s \to  s^{-N+1-\frac{k_0}{2}}\Upsilon_{k_0,i}(s) \text{ belongs to } L^1(0,r_0).
\end{equation}
To this end we will estimate each terms in \eqref{resto}.  Thanks to
\eqref{A-O}, H\"older's inequality, a change of
variables and Proposition \ref{wla bounded}, we have that 
\begin{align*}
  &\left|\int_{B_s\setminus \tilde \Gamma}(A-\mathop{\rm Id}\nolimits_N) \nabla U \cdot \frac{\nabla_{\mathbb{S}^{N-1}}Y_{k_0,i}(y/|y|)}{|y|} dy\right| 
\le \co \int_{B_s\setminus \tilde \Gamma}|y||\nabla U|\frac{|\nabla_{\mathbb{S}^{N-1}}Y_{k_0,i}(y/|y|)|}{|y|} \, dy\\
&\le \co \left(\int_{B_s\setminus \tilde \Gamma}|\nabla U|^2 dy \right)^{\frac12}\left(\int_{B_s\setminus \tilde \Gamma}|\nabla_{\mathbb{S}^{N-1}}Y_{k_0,i}(y/|y|)|^2 dy \right)^{\frac12}\\
&\le \co \,s^{\frac{N-2}{2}} s^{\frac{N}{2}}
  \sqrt{H(s)}\left(\int_{B_1\setminus \tilde \Gamma}
|\nabla  W^s(y)|^2\ dy \right)^{\frac12} \le \co \,s^{N-1}\sqrt{H(s)}.
\end{align*}
From H\"older's inequality, \eqref{Found ineq}, 
  \eqref{mu-estimates},  and Proposition \ref{wla bounded}  it follows that
\begin{align*}
&\left|\int_{B_s} \tilde{f}(y)U(y)Y_{k_0,i}(y/|y|) \, dy\right| 
\le \left(\int_{B_s} |\tilde{f}(y)|U^2(y) \, dy\right)^{\frac12}
\left(\int_{B_s} |\tilde{f}(y)|Y_{k_0,i}^2(y/|y|) \, dy\right)^{\frac12}\\
&\le \co \, s^{\frac{4\epsilon}{N+2 \epsilon}}\left(\int_{B_s \setminus \tilde \Gamma}|\nabla U|^2 \, dy+s^{N-2}H(s)\right)^{\frac{1}{2}} \left(\int_{B_s \setminus \tilde \Gamma}|\nabla Y_{k_0,i}(y/|y|)|^2 \, dy+s^{N-2}\right)^{\frac{1}{2}} \\
&\le \co \, s^{(N-2)+\frac{4\epsilon}{N+2 \epsilon}} \sqrt{H(s)}.
\end{align*}
Furthermore, in view of \eqref{A-O}, for a.e. $s \in
(0,r_0)$ we have that 
\begin{equation*}
  \left|\int_{\partial B_s}(A-\mathop{\rm Id}\nolimits_N) 
    \nabla U \cdot \frac{y}{|y|}Y_{k_0,i}(y/|y|) \, dS\right|
  \le \co \, s\int_{\partial B_s} |\nabla U||Y_{k_0,i}(y/|y|)| \, dS
\end{equation*}
and an integration by parts and H\"older's inequality yield, for any $r \in (0,r_0]$,
\begin{align*}
&\int_0^{r} s^{-N+2-\frac{k_0}{2}}\left(\int_{\partial B_s} |\nabla U||Y_{k_0,i}(y/|y|)| \, dS\right) \, ds=r^{-N+2-\frac{k_0}{2}}\int_{B_{r}\setminus \tilde \Gamma}|\nabla U||Y_{k_0,i}(y/|y|)|\\
&+\left(N-2+\frac{k_0}{2}\right)\int_0^{r} s^{-N+1-\frac{k_0}{2}}\left(\int_{B_s\setminus \tilde \Gamma} |\nabla U||Y_{k_0,i}(y/|y|)| \, dS\right) \, ds \\
&\le \co \left(r^{1-\frac{k_0}{2}} \sqrt{H(r)}+\int_0^{r}s^{-\frac{k_0}{2}} \sqrt{H(s)} \, ds\right),
\end{align*}
reasoning as above.
In conclusion, combining the above estimates with \eqref{gamma as
  k0/2} and \eqref{H upper bound}, we obtain that, for any $r \in (0,r_0]$,
\begin{align}\label{eq:19}
  \int_0^{r} s^{-N+1-\frac{k_0}{2}}|\Upsilon_{k_0,i}(s)| \, ds &\le 
	\co \left(r^{1-\frac{k_0}{2}} \sqrt{H(r)}+\int_0^{r} s^{-\frac{k_0}{2}-1+
    \frac{4\epsilon}{N+2 \epsilon}} \sqrt{H(s)} \, ds \right)\\
  &\le \co \left(r+\int_0^{r}
    s^{\frac{2\epsilon-N}{N+2\epsilon}}\, ds \right)
\leq \co \, r^{\frac{4\epsilon}{N+2\epsilon}}\notag 
\end{align}
which in particular implies \eqref{eq:8}.
By \eqref{eq:8}, it follows that, for every $r \in (0,r_0]$,
\begin{align}\label{eq:10}
&\la^{\frac{k_0}{2}}\left(r^{-\frac{k_0}{2}}\varphi_{k_0,i}(r)+\frac{2N+k_0-4}{2(N+k_0-2)}\int_\la^r
  s^{-N+1-\frac{k_0}{2}}\Upsilon_{k_0,i}(s) \,
  ds-\frac{k_0c_{k_0,i}(r)r^{-N+2-k_0}}{2(N
+k_0-2)}\right)\\ 
&= O\left(\la^{\frac{k_0}{2}}\right)=	o\left(\la^{-N+2-\frac{k_0}{2}}\right) \quad \text{as } \la \to 0^+ \notag 
\end{align}
and $s\to s^{\frac{k_0}{2}-1}\Upsilon_{k_0,i}(s)$ belongs to $L^1(0,r_0)$.

Next we show that for every $r \in (0,r_0)$
\begin{equation}\label{eq:9}
c_{k_0,i}(r)+\int_{0}^r t^{\frac{k_0}{2}-1}\Upsilon_{k_0,i}(t) \, dt=0.	
\end{equation}
We argue by contradiction assuming that there exists $r \in (0,r_0)$
such that \eqref{eq:9} does not hold.  Then by
\eqref{eq:13} and \eqref{eq:10}
\begin{equation}\label{eq:11}
  \varphi_{k_0,i}(\la) \sim
  \frac{k_0 \la^{-N+2-\frac{k_0}{2}}}{2(N+k_0-2)}
  \left(c_{k_0,i}(r)
    +\int_{\la}^r t^{\frac{k_0}{2}-1}\Upsilon_{k_0,i}(t) \, dt\right) \quad \text{as } \la \to 0^+.	
\end{equation}
From H\"older's inequality,  a change of variables, and \eqref{Hardy ineq}
  \[\int_0^{r_0} \la^{N-3}|\varphi_{k_0,i}(\la)|^2 \, d\la
  \le \int_0^{r_0} \la^{N-3}\left(\int_{\mathbb{S}^{N-1}}|U(\la\theta )|^2 dS\,\right) d\la=
  \int_{B_{r_0}}\frac{|U|^2}{|y|^2} dy<+\infty
\]
thus contradicting \eqref{eq:11}. Hence \eqref{eq:9} is proved.

Furthermore from \eqref{eq:19} and \eqref{eq:9}
\begin{align}\label{eq:12}
\bigg|\la^{-N+2-\frac{k_0}{2}}\bigg(c_{k_0,i}(r)&+\int_{\la}^r t^{\frac{k_0}{2}-1}\Upsilon_{k_0,i}(t) \, dt\bigg)\bigg|=\la^{-N+2-\frac{k_0}{2}}
\left|\int_0^{\la} t^{\frac{k_0}{2}-1}\Upsilon_{k_0,i}(t) \, dt\right| \\
&\le \la^{-N+2-\frac{k_0}{2}}\int_0^\la t^{N-2+k_0}\left|t^{-N+1-\frac{k_0}{2}}\Upsilon_{k_0,i}(t)\right| \, dt  \notag\\ 
&\le \la^\frac{k_0}{2}\int_0^\la\left|t^{-N+1-\frac{k_0}{2}}\Upsilon_{k_0,i}(t)\right| \, dt=O\left(\la^{\frac{4\epsilon}{N+2\epsilon}+\frac{k_0}{2}}\right) \quad \text{as } \la \to 0^+.\notag 
\end{align}	
Then the conclusion follows form \eqref{eq:13}, \eqref{eq:9}, and \eqref{eq:12}.
\end{proof}

\begin{proposition} \label{limit-gamma-positive}
Let $\gamma$ be as in \eqref{limit gamma }. Then 
\[\lim_{r \to 0^+} r^{2\gamma} H(r)>0.\]
\end{proposition}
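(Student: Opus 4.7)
The plan is to argue by contradiction, assuming
$L:=\lim_{r\to 0^+}r^{-2\gamma}H(r)=0$ (this limit exists
by the preceding proposition), and to derive an incompatibility between a
quantitative sharpening of \eqref{Fourier-coefficents-expansion}, the
blow-up description of Proposition \ref{u-asymptotic}, and the lower bound
\eqref{H lower bound}.

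First I would sharpen \eqref{Fourier-coefficents-expansion}. Dividing that
identity by $\lambda^{k_0/2}$ and letting $\lambda\to 0^+$ shows that the
limit $C_{k_0,i}:=\lim_{\lambda\to 0^+}\lambda^{-k_0/2}\varphi_{k_0,i}(\lambda)$
exists and is independent of the auxiliary parameter $r$. By tracing back
the estimates \eqref{eq:19} and \eqref{eq:12} from the proof of
\eqref{Fourier-coefficents-expansion}, the $o(\lambda^{k_0/2})$ remainder
upgrades to the quantitative bound
\[
\varphi_{k_0,i}(\lambda)=C_{k_0,i}\,\lambda^{k_0/2}+O\bigl(\lambda^{k_0/2+\delta}\bigr)
\quad\text{as }\lambda\to 0^+,\qquad
\delta:=\tfrac{4\epsilon}{N+2\epsilon}>0.
\]

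Next I would combine this with Bessel's inequality on $\mathbb{S}^{N-1}$
together with \eqref{mu-O}:
\[
\sum_{i=1}^{N_{k_0}}\varphi_{k_0,i}(\lambda)^2\le
\int_{\mathbb{S}^{N-1}}U(\lambda\theta)^2\,dS=(1+O(\lambda))\,H(\lambda).
\]
Dividing by $\lambda^{k_0}=\lambda^{2\gamma}$ and passing to the limit under
the working assumption $L=0$ forces $\sum_iC_{k_0,i}^2=0$, so $C_{k_0,i}=0$
for every $i$, and the refined expansion reduces to
$\varphi_{k_0,i}(\lambda)=O(\lambda^{k_0/2+\delta})$. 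Then Proposition
\ref{u-asymptotic} supplies a subsequence $\lambda_n\to 0^+$ and an
eigenfunction $\Psi=\sum_i\beta_i\,Y_{k_0,i}$ of \eqref{sphereprob} with
$\|\Psi\|_{L^2(\mathbb{S}^{N-1})}=1$ such that
$U(\lambda_n\cdot)/\sqrt{H(\lambda_n)}\to|y|^\gamma\Psi(y/|y|)$ in
$H^1(B_1\setminus\tilde\Gamma)$; I would select $i_0$ with $\beta_{i_0}\neq 0$.
Continuity of the trace on $\mathbb{S}^{N-1}$ yields
$\varphi_{k_0,i_0}(\lambda_n)/\sqrt{H(\lambda_n)}\to\beta_{i_0}$, whence
$\sqrt{H(\lambda_n)}\le C\,|\varphi_{k_0,i_0}(\lambda_n)|=O(\lambda_n^{k_0/2+\delta})$,
so $H(\lambda_n)=O(\lambda_n^{2\gamma+2\delta})$. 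Finally, applying
\eqref{H lower bound} with $\sigma=\delta$ produces the double estimate
$\alpha_\delta\lambda_n^{2\gamma+\delta}\le H(\lambda_n)\le C\lambda_n^{2\gamma+2\delta}$,
forcing $\alpha_\delta\le C\lambda_n^\delta\to 0$, which contradicts
$\alpha_\delta>0$.

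The main obstacle I expect is the quantitative refinement in the first step:
one must revisit each term entering $\Upsilon_{k_0,i}$ (cf.\ \eqref{resto})
and each integral appearing in the bracketed part of
\eqref{Fourier-coefficents-expansion}, combining the upper bound
\eqref{H upper bound}, the Hardy, Sobolev and trace inequalities from
Section \ref{sec:trac-embedd-space}, and the cancellation identity
\eqref{eq:9}, so as to exhibit the explicit $\lambda^\delta$-gain over the
naive $o(\lambda^{k_0/2})$ remainder. Once this technical step is in place,
the rest is a clean assembly of Bessel's inequality, the blow-up
classification and the two-sided control on $H$.
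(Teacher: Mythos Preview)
Your proposal is correct and follows essentially the same route as the paper: contradiction hypothesis, then from \eqref{Fourier-coefficents-expansion} together with \eqref{eq:19} and \eqref{eq:12} obtain the quantitative bound $\varphi_{k_0,i}(\lambda)=O(\lambda^{k_0/2+\delta})$ once the leading constants vanish, and finally confront this with the blow-up limit from Proposition \ref{u-asymptotic} and the lower bound \eqref{H lower bound}. The only cosmetic difference is the packaging of the final contradiction: the paper divides $\varphi_{k_0,i}(\lambda)$ by $\sqrt{H(\lambda)}$ (using \eqref{H lower bound} with $\sigma=\delta$) to get $\int_{\mathbb S^{N-1}}W^\lambda Y_{k_0,i}\,dS=o(1)$ and then contradicts $\|\Psi\|_{L^2}=1$, whereas you use the blow-up to bound $H(\lambda_n)$ from above and contradict \eqref{H lower bound} directly --- the same inequalities in a different order.
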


\begin{proof}
  For any $\la \in (0,r_0)$ the function $U(\la \cdot)$ belongs to
  $L^2(\mathbb{S}^{N-1})$. Then we can expand it in Fourier series
  respect to the basis $\{Y_{k,i}\}_{k\in\mathbb{N}, i=1,\dots,N_k}$
  introduced in Proposition \ref{eigen}:
\[U(\la \, \cdot)=\sum_{k=0}^\infty\sum_{i=1}^{N_k}\varphi_{k,i}(\la)Y_{k,i}\quad \text{in } L^2(\mathbb{S}^{N-1}),\]
where we have defined $\varphi_{k,i}(\la)$ in \eqref{Fourier-coefficents} for any $k \in \mathbb{N}$ and any $i=1,\dots, N_k$.
From \eqref{mu-O}, a change of variables  and the Parseval identity
\begin{equation}\label{eq:15}
	H(\la)=(1+O(\la))\int_{\mathbb{S}^{N-1}}U^2 (\la \theta) \,dS=(1+O(\la))\sum_{k=0}^\infty\sum_{i=1}^{N_k}|\varphi_{k,i}(\la)|^2.
\end{equation}
We argue by contradiction assuming that
$\lim_{r \to 0^+} r^{2\gamma} H(r)=0$. Then by \eqref{eq:15}, letting
$k_0$ be as in 
\eqref{gamma as k0/2},
\begin{equation}\label{eq:16}
\lim_{\la \to 0^+}\la^{-\frac{k_0}{2}}\varphi_{k_0,i}(\la)=0 \quad \text{for any } i=1,\dots,N_{k_0}. 
\end{equation} 
From \eqref{Fourier-coefficents-expansion} it follows that 
\begin{align}\label{eq:18}
r^{-\frac{k_0}{2}}\varphi_{k_0,i}(r)&+\frac{2N+k_0-4}{2(N+k_0-2)}\int_{0}^r s^{-N+1-\frac{k_0}{2}}\Upsilon_{k_0,i}(s) \, ds\\
\notag&+\frac{k_0r^{-N+2-k_0}}{2(N+k_0-2)} \int_{0}^rs^{\frac{k_0}{2}-1} \Upsilon_{k_0,i}(s) \, ds=0
\end{align}
for any $r \in ( 0,r_0)$ and any $ i=1,\dots,N_{k_0}$.

In view of \eqref{wla}, \eqref{Fourier-coefficents}, \eqref{eq:19},
and \eqref{eq:12}, \eqref{eq:18} implies that
\begin{equation}\label{eq:46}
\sqrt{H(\lambda)} 
\int_{\mathbb{S}^{N-1}}W^\lambda Y_{k_0,i} \,dS
= \varphi_{k_0,i}(\lambda)=O\Big(\lambda^{\frac{4\epsilon}{N+2\epsilon}+\frac{k_0}2}\Big)\quad\text{as
  }\lambda\to0^+
\end{equation}
for all $i=1,\dots,N_{k_0}$. From \eqref{H lower bound} with
$\sigma=\frac{4\epsilon}{N+2\epsilon}$ we have that $\sqrt{H(\lambda)
}\geq
\sqrt{\alpha_{\frac{4\epsilon}{N+2\epsilon}}}\lambda^{\frac{k_0}2+\frac{2\epsilon}{N+2\epsilon}}$
in a neighbourhood of $0$, so that \eqref{eq:46} implies that 
\begin{equation}\label{eq:47}
\int_{\mathbb{S}^{N-1}}W^\lambda Y_{k_0,i} \,dS
=O\Big(\lambda^{\frac{2\epsilon}{N+2\epsilon}}\Big)=o(1)\quad\text{as
  }\lambda\to0^+
\end{equation}
for all $i=1,\dots,N_{k_0}$.

On the other hand, by
Proposition \ref{u-asymptotic} and 
continuity of the trace map $\gamma_1$ (see Proposition \ref{trace
  r}), for every sequence $\lambda_n\to0^+$, there exist a
  subsequence $\{\lambda_{n_k}\}$ and $\Psi\in \mathop{\rm span}\{
Y_{k_0,i}:m=i,\dots, N_{k_0}\}$ such that
  \begin{equation}\label{eq:48}
    \|\Psi\|_{L^2({\mathbb
        S}^{N-1})}=1\quad\text{and}
    \quad W^{\lambda_{n_k}}\to\Psi \quad\text{in }L^2({\mathbb S}^{N-1}).
\end{equation}
From (\ref{eq:47}) and (\ref{eq:48}) it follows that
\[
0=\lim_{k\to\infty}
\int_{\mathbb{S}^{N-1}}W^{\lambda_{n_k}} \Psi \,dS
=\|\Psi\|^2_{L^2({\mathbb
        S}^{N-1})}=1,
\]
thus reaching a contradiction.
\end{proof}

We are now ready to prove
he following result, which is a more complete version of
  Theorem~\ref{main-theorem}.

\begin{theorem} \label{main-theorem-precise}
 Let $N \ge3$ and let
  $u \in H^1(B_R\setminus \Gamma)$ be a non-trivial
  weak solution to
  \eqref{op}, with $\Gamma$ defined in  \eqref{Gamma
  2}--\eqref{Gamma} and $f$ satisfying either assumption \eqref{h1} or
  assumption \eqref{h2}. Then there exists $k_0\in \mathbb{N}$ such
  that,
letting $\mathcal N$ be as in Section \ref{sec:almgr-type-freq},  
\begin{equation}\label{limit-gamma-k0}
	\lim_{r \to 0^+}\mathcal{N}(r)=\frac{k_0}{2}.
\end{equation}
Moreover if $N_{k_0}$ is the multiplicity of the eigenvalue
$\mu_{k_0}$ of problem \eqref{sphereprob} and
$\{Y_{k_0,i}\}_{i=1,\dots,N_{k_0}}$ is a $L^2(\mathbb{S}^{N-1})$-orthonormal
basis of the eigenspace associated to 
$\mu_{k_0}$, then

  \begin{equation}\label{limit-u}
\lambda^{-\frac{k_0}{2}}u(\lambda \cdot) \to
          \Phi\quad\text{and}\quad \lambda^{1-\frac{k_0}{2}}
\left(\nabla_{B_R\setminus\Gamma}u\right)(\lambda
     \cdot)
     \to \nabla_{\R^N\setminus\tilde\Gamma}\Phi \quad
     \text{in } L^2(B_1) \quad \text{as } \la \to 0^+,
   \end{equation}
   where
   \begin{equation*}
     \Phi=
     \sum_{i=1}^{N_{k_0}}	\alpha_i Y_{k_0,i}\left(\frac{y}{|y|}\right)
   \end{equation*}
$(\alpha_1,\dots,\alpha_{N_{k_0}})\neq(0,\dots,0)$
 and, for all $i\in\{1,2,\dots,N_{k_0}\}$, 
\begin{multline}\label{alpha-i}
\alpha_i=r^{-k_0/2}\int_{\mathbb{S}^{N-1}} 	u(F(r\theta)) Y_{k_0,i}(\theta)\, dS\\
 +\frac{1}{2-N-k_0} \int_{0}^r\left(\frac{2-N-\frac{k_0}{2}}{s^{N+\frac{k_0}{2}-1}}-\frac{k_0s^{\frac{k_0}{2}-1}}{2r^{N-2+k_0}}\right)\Upsilon_{k_0,i}(s)\, ds
\end{multline}
for any $r \in (0,r_0)$ for some $r_0>0$, where we have defined
$\Upsilon_{k_0,i}$ in \eqref{resto} and $F$ is the diffeomorphism
introduced in Proposition \ref{diffeomorphism}.
\end{theorem}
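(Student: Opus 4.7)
The identity \eqref{limit-gamma-k0} is immediate from Proposition \ref{existence limit gamma} combined with \eqref{gamma as k0/2} in Proposition \ref{u-asymptotic}. The substantive task is to upgrade the subsequence convergence of the rescaled family \eqref{wla} obtained in Proposition \ref{u-asymptotic} to convergence of the full family, and to identify the limit explicitly via the Fourier coefficients \eqref{Fourier-coefficents}.

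The plan is as follows. First, the asymptotic expansion \eqref{Fourier-coefficents-expansion} directly implies that, for each $i=1,\dots,N_{k_0}$, the true limit
\[
\alpha_i:=\lim_{\lambda\to 0^+}\lambda^{-k_0/2}\varphi_{k_0,i}(\lambda)
\]
exists; a straightforward algebraic manipulation (combining the coefficients $\frac{2N+k_0-4}{2(N+k_0-2)}$ and $\frac{k_0}{2(N+k_0-2)}$ over the common denominator $2-N-k_0$) shows it agrees with the explicit expression in \eqref{alpha-i} for every $r\in(0,r_0)$. In parallel, Proposition \ref{limit-gamma-positive} yields a strictly positive and finite constant $L:=\lim_{\lambda\to 0^+}\lambda^{-k_0}H(\lambda)$.

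Next, I would use these two ingredients to pin down the blow-up profile uniquely. Given any sequence $\lambda_n\to 0^+$, extract a subsequence $\{\lambda_{n_k}\}$ along which Proposition \ref{u-asymptotic} yields $W^{\lambda_{n_k}}\to |y|^{k_0/2}\Psi(y/|y|)$ strongly in $H^1(B_1\setminus\tilde\Gamma)$, with $\Psi$ an $L^2(\mathbb{S}^{N-1})$-normalised eigenfunction of \eqref{sphereprob} associated to $\mu_{k_0}$; writing $\Psi=\sum_i c_i Y_{k_0,i}$, the continuity of the trace operator $\gamma_1$ of Proposition \ref{trace r} delivers the convergence of the Fourier coefficients
\[
c_i=\lim_{k\to\infty}\int_{\mathbb{S}^{N-1}}W^{\lambda_{n_k}}Y_{k_0,i}\,dS
=\lim_{k\to\infty}\frac{\lambda_{n_k}^{-k_0/2}\varphi_{k_0,i}(\lambda_{n_k})}{\sqrt{\lambda_{n_k}^{-k_0}H(\lambda_{n_k})}}
=\frac{\alpha_i}{\sqrt L}.
\]
Since $\Psi$ is thus independent of the subsequence, the entire family $W^\lambda$ converges in $H^1(B_1\setminus\tilde\Gamma)$ to $L^{-1/2}|y|^{k_0/2}\sum_i\alpha_iY_{k_0,i}(y/|y|)$, and multiplying by $\sqrt{H(\lambda)/\lambda^{k_0}}\to\sqrt L$ gives
\[
\lambda^{-k_0/2}U(\lambda\,\cdot)\to \Phi\quad\text{in }H^1(B_1\setminus\tilde\Gamma),\quad\text{where }\Phi(y)=|y|^{k_0/2}\sum_{i=1}^{N_{k_0}}\alpha_iY_{k_0,i}(y/|y|).
\]
The non-triviality of $(\alpha_1,\dots,\alpha_{N_{k_0}})$ then follows from $\|\Psi\|_{L^2(\mathbb{S}^{N-1})}=1$, which forces $\sum_i\alpha_i^2=L>0$.

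To conclude, I would transfer the convergence from $U=u\circ F$ back to $u$. By Proposition \ref{diffeomorphism}, $F$ is a $C^{1,1}$-diffeomorphism with $F(0)=0$ and $J_F(0)=\mathop{\rm Id}_N$, hence setting $G_\lambda(y):=F^{-1}(\lambda y)/\lambda$ one has $G_\lambda\to \mathop{\rm Id}$ and $J_{G_\lambda}\to \mathop{\rm Id}_N$ uniformly on $\overline{B_1}$. Writing $\lambda^{-k_0/2}u(\lambda y)=V^\lambda(G_\lambda(y))$ with $V^\lambda:=\lambda^{-k_0/2}U(\lambda\,\cdot)$, and using the chain rule
\[
\lambda^{1-k_0/2}(\nabla u)(\lambda y)=J_{F^{-1}}(\lambda y)^{T}(\nabla V^\lambda)(G_\lambda(y)),
\]
a routine change of variables combined with the $H^1$-convergence of $V^\lambda$ to $\Phi$ and the uniform $H^1$-boundedness of $\{V^\lambda\}$ produces the two $L^2(B_1)$-convergences claimed in \eqref{limit-u}. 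The most delicate step, in my view, is the uniqueness argument of the third paragraph: it requires coordinating carefully the subsequential convergence in Proposition \ref{u-asymptotic}, the strict positivity of $L$ granted by Proposition \ref{limit-gamma-positive}, and the precise Fourier asymptotics \eqref{Fourier-coefficents-expansion}, in order to single out the profile $\Psi$ unambiguously and thus rule out oscillations along different sequences $\lambda_n\to 0^+$.
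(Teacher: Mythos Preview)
Your proposal is correct and follows essentially the same route as the paper's own proof: both combine Proposition~\ref{u-asymptotic} with Proposition~\ref{limit-gamma-positive} to pass from the normalised blow-up $W^\lambda$ to $\lambda^{-k_0/2}U(\lambda\cdot)$, identify the coefficients $\alpha_i$ via the Fourier asymptotics \eqref{Fourier-coefficents-expansion} to obtain \eqref{alpha-i} and rule out subsequence dependence, and finally transfer the convergence from $U$ back to $u$ through the diffeomorphism $F$ using $G_\lambda\to\mathop{\rm Id}$. The only cosmetic difference is that you first define $\alpha_i$ as $\lim_{\lambda\to0^+}\lambda^{-k_0/2}\varphi_{k_0,i}(\lambda)$ and then match it to the blow-up, whereas the paper first extracts the $\alpha_i$ from the blow-up limit and then evaluates them via \eqref{Fourier-coefficents-expansion}; the logical content is identical.
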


\begin{proof}
  \eqref{limit-gamma-k0} directly comes from \eqref{gamma as k0/2}.
  Let  $U=u \circ F$ and
  $\{\la_n\}_{n \in \mathbb{N}}$ be a sequence such that
  $\lim_{n \to \infty}\la_n=0^+$. By Proposition \ref{u-asymptotic}
  and Proposition \ref{limit-gamma-positive} there exist a subsequence
  $\{\la_{n_k}\}_{k \in \mathbb{N}}$ and constants
  $\alpha_1,\dots,\alpha_{N_{k_0}}$ such that
  $(\alpha_1,\dots,\alpha_{N_{k_0}})\neq(0,\dots,0)$ and
  \[
    \la_{n_k}^{-\frac{k_0}{2}}U(\la_{n_k} y) \to |y|^{\frac{k_0}{2}} \sum_{i=1}^{N_{k_0}}	\alpha_i Y_{k_0,i}\left(\frac{y}{|y|}\right) 
  \quad \text{in } H^1(B_1 \setminus \tilde \Gamma) \quad \text{as } k
  \to \infty.\]
Now we show that the coefficients
$\alpha_1,\dots,\alpha_{N_{k_0}}$ do not depend on
$\{\la_n\}_{n \in \mathbb{N}}$ nor on its subsequence
$\{\la_{n_k}\}_{k \in \mathbb{N}}$. Thanks to the continuity of the
trace operator $\gamma_1$ introduced in Proposition \ref{trace r}
\[
   \la_{n_k}^{-\frac{k_0}{2}}U(\la_{n_k} \cdot) \to \sum_{i=1}^{N_{k_0}}	\alpha_i Y_{k_0,i} \quad \text{in }L^2(\mathbb{S}^{N-1}) \quad \text{as } k \to \infty\]
and therefore, letting $\varphi_{k_0,i}$ be as in
\eqref{Fourier-coefficents} for any $i=1, \dots, N_{k_0}$,
\[\lim_{k \to \infty}\la^{-\frac{k_0}{2}}\varphi_{k_0,i}(\la_{n_k})=
  \lim_{k \to \infty}\int_{\mathbb{S}^{N-1}}{\la_{n_k}^{-k_0/2}}U(\la_{n_k} \theta){Y_{k_0,i}(\theta)}\, dS
= \sum^{N_{k_0}}_{j=1}	\alpha_j \int_{\mathbb{S}^{N-1}}Y_{k_0,j}Y_{k_0,i}\, dS=\alpha_i.\]
On the other hand by \eqref{Fourier-coefficents-expansion}
\begin{multline*}
  \lim_{k \to \infty}\la^{-\frac{k_0}{2}}\varphi_{k_0,i}(\la_{n_k})
  =r^{-\frac{k_0}{2}}\varphi_{k_0,i}(r)+\frac{2N+k_0-4}{2(N+k_0-2)}\int_{0}^r s^{-N+1-\frac{k_0}{2}}\Upsilon_{k_0,i}(s) \, ds\\
+\frac{k_0r^{-N+2-k_0}}{2(N+k_0-2)} \int_{0}^rs^{\frac{k_0}{2}-1} \Upsilon_{k_0,i}(s) \, ds,
\end{multline*}
for all  $i=1, \dots, N_{k_0}$ and $r\in(0,r_0]$, where we have defined $\Upsilon_{k_0,i}$ in \eqref{resto}. We deduce that 
\begin{multline}\label{eq:23}
	\alpha_i=r^{-\frac{k_0}{2}}\varphi_{k_0,i}(r)+\frac{2N+k_0-4}{2(N+k_0-2)}\int_{0}^r s^{-N+1-\frac{k_0}{2}}\Upsilon_{k_0,i}(s) \, ds\\
 +\frac{k_0r^{-N+2-k_0}}{2(N+k_0-2)} \int_{0}^rs^{\frac{k_0}{2}-1} \Upsilon_{k_0,i}(s) \, ds
\end{multline}
and so $\alpha_i$ does not depend on  $\{\la_n\}_{n \in \mathbb{N}}$ nor on its subsequence $\{\la_{n_k}\}_{k \in \mathbb{N}}$ thus implying that
\begin{equation}\label{eq:22}
	\la^{-\frac{k_0}{2}}U(\la y) \to |y|^{\frac{k_0}{2}} \sum_{i=1}^{N_{k_0}}	\alpha_i Y_{k_0,i}\left(\frac{y}{|y|}\right) \quad \text{in } H^1(B_1 \setminus \tilde \Gamma)
 \quad \text{as } \la \to 0^+.
\end{equation}
To prove \eqref{limit-u} we note that
\[
  \la^{-\frac{k_0}{2}}u(\la x)=\la^{-\frac{k_0}{2}}U(\la
  G_\la(x)), \quad \nabla \left(\la^{-\frac{k_0}{2}}u(\la
    x)\right)=\nabla \left(\la^{-\frac{k_0}{2}}U(\la
    x)\right)(G_\la(x))J_{G_\lambda}(x),
\]
where $G_\la(x)=\frac{1}{\la}F^{-1}(\la x)$ and  $F$ is the
diffeomorphism introduced in  Proposition \ref{diffeomorphism}.
We also have by Proposition \ref{diffeomorphism} that 
\[
  G_\la(x)= x +O(\la) \quad \text{and} \quad J_G(x)=\mathop{\rm
    Id}\nolimits_N +O(\la)
\]
as $\la \to 0^+$ uniformly respect to $x \in B_1$. Then from  \eqref{eq:22} we deduce \eqref{limit-u} and   \eqref{alpha-i} follows from \eqref{eq:23} and \eqref{Fourier-coefficents}.
\end{proof}

\bibliographystyle{acm}
\bibliography{References}
\end{document}